\newtheorem{theorem}{Theorem}[section]
\newtheorem{lemma}[theorem]{Lemma}
\newtheorem{proposition}[theorem]{Proposition}
\newtheorem{corollary}[theorem]{Corollary}
\newtheorem{remark}[theorem]{Remark}
\newtheorem{example}[theorem]{Example}
\newtheorem{definition}[theorem]{Definition}
\newtheorem{assumption}[theorem]{Assumption}
\numberwithin{equation}{section}
\renewcommand{\geq}{\geqslant}
\renewcommand{\leq}{\leqslant}
\renewcommand{\tilde}{\widetilde}
\renewcommand{\H}{\mathbf{H}}
\renewcommand{\part}{\partial}
\renewcommand{\deg}{\mathrm{deg}}
\renewcommand{\Re}{\mathrm{Re}\,}
\renewcommand{\d}{\,\mathrm{d}}
\renewcommand{\le}{\left (}
\DeclareMathOperator*{\esssup}{ess\,sup} 
\DeclareMathOperator*{\essinf}{ess\,inf}
\DeclareMathOperator*{\argmin}{arg\,min}
\newcommand{\closure}[2][3]{%
  {}\mkern#1mu\overline{\mkern-#1mu#2}}
\newcommand{\cl}{\closure[0]}
\newcommand{\be}{\begin{equation}}
\newcommand{\ee}{\end{equation}}
\newcommand{\Fix}{\mathrm{Fix}}
\newcommand{\Coin}{\mathrm{Coin}}
\newcommand{\Lin}{\mathcal{L}}
\newcommand{\A}{\mathbf{A}}
\newcommand{\F}{\mathbf{F}}
\newcommand{\K}{\mathbf{K}}
\newcommand{\Q}{\mathbb{Q}}
\newcommand{\R}{\mathbb{R}}
\newcommand{\C}{\mathbb{C}}
\newcommand{\N}{\mathbb{N}}
\newcommand{\Z}{\mathbb{Z}}
\newcommand{\Gr}{\mathrm{Gr}}
\newcommand{\id}{\mathrm{id}}
\newcommand{\Ind}{\mathrm{Ind}}
\newcommand{\la}{\langle}
\newcommand{\ra}{\rangle}
\newcommand{\vp}{\varphi}
\newcommand{\f}{\varphi}
\newcommand{\eps}{\varepsilon}
\newcommand{\supp}{\mathrm{supp}\,}
\newcommand{\pr}{\right )}
\newcommand{\multi}{\multimap}
\author[W. Kryszewski]{Wojciech Kryszewski}
\address{Institute of Mathematics, Lodz University of Technology, Lodz, Poland}
\email{wojciech.kryszewski@p.lodz.pl}
\author[J. Siemianowski]{Jakub Siemianowski}
\address{Institute of Mathematics, Polish Academy of Sciences, Śniadeckich 8, Warsaw, Poland}
\email{jsiem@mat.umk.pl}
\title[Constrained Elliptic Systems]
{Constrained Semilinear Elliptic Systems on $\R^N$}
\date{\today}
\subjclass[2010]{35J47, 35J61, 47B12, 47D06, 47H11, 55M20}
\keywords{system of elliptic PDEs, strongly coupled, sectorial operator, semigroup of linear operators, coincidence, topological degree, constraints, semigroup invariance, viablity, tail estimates}
\begin{document}
\pagestyle{myheadings}
\baselineskip15pt
\begin{abstract}
The existence of solutions $u$ in $H^1(\R^N,\R^M)\cap H^2_{loc}(\R^N,\R^M)$ of the  coupled semilinear system of second order elliptic partial differential equations on $\R^N$ of the form
\[
\mathcal{P}[u] = f(x,u,\part u), \quad x\in \R^N,
\]
under pointwise constraints is considered. The problem is studied via the constructed suitable topological invariant, the so-called constrained topological degree, which allows to get the existence of solutions of abstract problems considered as $L^2$-realizations of the approximating sequence  of systems obtained by the truncation of the initial system to bounded subdomains.  The key step of the proof consists in showing  the relative $H^1$-compactness of the sequence of solutions to the truncated systems by the use of the so-called tail estimates. The constructions rely on the semigroup approach combined with topological methods, as well as invariance/viability techniques.
\end{abstract}
\maketitle

\section{Introduction}
In the paper we discuss the existence of solutions $u\in H^1(\R^N,\R^M)$ to a strongly coupled system of semilinear  elliptic partial differential equations
$$\mathcal P[u]=f(x,u,\part u),\;x\in\R^N,\leqno (1)$$
where $\part u$ is the Jacobian matrix of the unknown function $u:\R^N\to\R^M$. Here, $f:\R^N\times\R^M\times\R^{M\times N}\to\R^M$ is a vector-valued function and $\mathcal P$ is a linear second-order elliptic partial differential operator of the form
${\mathcal P}[u]=-\sum_{i,j=1}^N\part_i(A^{ij}\part_ju)+\sum_{i=1}^NB^i\part_iu+Cu$ with the coefficients
 $A^{ij}$, $B^i$ and $C$ being  functions from $\R^N$ into $\R^{M\times M}$, the space of $M\times M$ matrices. In applications such systems describe steady  states of evolution processes involving $M$ unknown species or quantities subject to diffusion $\mathcal P$ and the forcing term $f$ including the advection or drift effects. The form of $\mathcal P$ allows interactions between species occur on the diffusion level, too (see e.g. \cite{Chueh} and more recent \cite{Marc}). We look for solutions $u$ to (1) satisfying {\em pointwise constraints} of the form
$u(x)\in K(x)$ for a.a. $x\in\R^N$, where the set $K(x)\subset \R^M$ is closed and convex.\\
\indent Constrained problems of this form arise  in various applications, where natural bounds for the unknown quantities are present.  For instance, experimentally obtained lower and upper  threshold values $\sigma_k,\tau_k$ (depending on $x$) are often given and solutions $u=(u_1,\ldots,u_M)$ satisfying $\sigma_k(x) \leq u_k(x)\leq\tau_k(x)$ a.e.  for  $1\leq k\leq M$ are sought-after. As it also seems,  constrained solutions appear sometimes {\em a posteriori} as a by-product consequence of assumptions relaxing the standard growth condition.   This is the case when the method of sub- and supersolutions is applied, e.g., if  $\Omega\subset\R^N$ is bounded, $f:\Omega\times\R\to\R$ is sufficiently regular and there are constants $\alpha\leq 0\leq\beta$ such that
\[
\tag{M} f(x,\alpha)\geq 0, \; f(x,\beta)\leq 0\quad \text{ for }x\in\Omega,
\]
then there is $u\in H^1_0(\Omega)$ such that $-\Delta u(x)=f(x,u(x))$ and $\alpha\leq u(x)\leq \beta$ for a.a. $x\in\Omega$ (see also \cite{S},  \cite{McKenna} and \cite{Miti}). The technique relying on the so-called ,,invariant regions'' or ,,rectangles'' is often used to show the global existence in time of parabolic evolution problems as well as the related stationary problems. The mentioned method is related to the method of upper and lower solutions and akin to arguments started apparently almost 90 years ago by Max M\"uller \cite{Muller}, \cite{Muller-book}; conditions like  (M) (and their relatives) are sometimes called the {\em M\"uller conditions} (see \cite{Walter-survey}, \cite[\S 12.IV, \S32.V]{Walter-book}).\\
\indent There is a vast bibliography on invariant sets (or the so-called {\em viablity} theory -- the term used mainly by French and Romanian mathematicians) for nonlinear parabolic systems of the form $u_t+\mathcal P[u]=f(u)$ on a bounded $\Omega$ and their elliptic counterparts (2). In the elliptic case we prefer to talk about {\em constraints} since no evolution takes place. The reader is referred especially to the seminal work of Amann \cite{Amann}, the books \cite{Martin} of Martin and \cite[Theorem 14.7, p. 200]{Smoller} of Smoller. These and other authors deal mainly with a bounded $\Omega$ and a closed convex bounded set $K(\cdot)=K$ (i.e., $K(\cdot)$ is {\em independent} of $x\in\Omega$) containing the origin.  For instance the case $A^{ij}=a_{ij}I$, $B^i=b_iI$ and $C=cI$  for $i,j=1,\ldots,N$, where $a_{ij}, b_i, c\in\R$ and $I$ stands for the $M\times M$ identity matrix, and $K$ is an arbitrary convex set is studied in \cite{Wein, Beber, Redheffer, Redlinger}. The case $A^{ij}=\delta_{ij}D$, where $\delta_{ij}$ is the Kronecker delta, $D=\mathrm{diag}(d_1,\ldots,d_M)$, $B^i=C=0$, and $K=\{u\in\R^M\mid G(u)\leq c\}$, where $G:\R^M\to\R$ is a quasi-convex function was studied, e.g., in \cite{Alikakos, Plum}. A different situation was considered in \cite{Chueh} and \cite{Smoller}; the authors deal with $A^{ij}=\delta_{ij}D$, where $D$ is an $M\times M$ not necessarily diagonal matrix and $B^i=C=0$. In \cite{Amann, Kuiper1, Kuiper2}, matrices $A^{ij}$, $B^i$ and $C$ are diagonal and $K$ is a rectangle (or the Cartesian product of closed convex and bounded sets), while in \cite{Kuiper1, Kuiper2} $K$ is no longer constant but $K(x)=\{u\in\R^M\mid \sigma_k(x)\leq u_k\leq \tau_k(x), k=1,\ldots,M\}$, where $\sigma,\tau:\Omega\to\R^M$. In all of theses papers the forcing term $f(x,u,\cdot)$ is, roughly speaking, assumed to be directed  inward the set $K$ for $x\in\Omega$ and $u\in\part K$.\\
\indent As it appears, the form of coefficients $A^{ij}$, $B^i$ and $C$, $i,j=1,\ldots,N$,  along with the shape of the constraining sets implicitly imply their invariance  with respect to homogeneous problem $u_t+\mathcal P[u]=0$. This actually follows from an appropriately  used version of the maximum principle, since the componentwise or the norm maximum principle for parabolic or elliptic systems can be interpreted as the invariance of an orthant or a ball, respectively.\\
\indent The above-mentioned ,,inwardness'' condition is subsumed by the {\em tangency} expressed in the language of convex analysis. This has already been observed in \cite{Martin} and thoroughly discussed in \cite{Carja}, where a part of the theory development (not reflected elsewhere) is presented with a detailed bibliography. A variety of the so-called Nagumo, Pavel's and other conditions are formulated and studied there. \\
\indent In the present paper we deal with a general elliptic operator $\mathcal P$ and a general convex constraint $K(x)\subset\R^M$, $x\in\Omega\subseteq \R^N$ (see Sections 1.2.2 and 4.3); conditions implying the invariance of constraints under the flow induced by $\mathcal P$ are imposed in assumption \ref{additional ass} and studied in Subsections \ref{INV} and \ref{co}.  The nonlinearity $f$ in (1) satisfies relaxed  regularity conditions and  is subject to the tangency condition (see assumption \ref{nonlinear}).\\
\indent Due to the lack of compactness general fixed point methods are not readily applicable to solve (1) directly. In this case, we establish the existence of a  solution by solving a sequence of the approximating Dirichlet boundary value problems truncated to $\Omega_n$, $n\in\N$, where $\Omega_n=B(0,R_n)$ is the open ball with $R_n\to\infty$. Roughly speaking, a solution is obtained then as a limit of those approximate solutions. For that reason along with (1) we study problems of the form
\vspace{0mm}
$${\mathcal P}[u]=f(x,u,\part u),\;\;u|_{\part\Omega}=0,\; u(x)\in K(x),\; x\in\Omega,\leqno(2)$$
where $\Omega$ is an open bounded subset of $\R^N$.

\indent For a bounded $\Omega$ our results follow from more general abstract results concerning a problem
$$\A(u)=\F(u),\;\; u\in\K,\leqno (3)$$
where $\K$ is a closed convex subset of a Banach space $X$,  $\A:D(\A)\to X$ is a sectorial operator in $X$, $\F:U\to X$ is a continuous map defined on an open subset $U$ of $\K^\alpha:=\K\cap X^\alpha$ with $X^\alpha$ being the fractional space corresponding to $\A$, $\alpha\in [0,1)$. Such a framework  creates a convenient abstract scheme for (2), where $\A$ corresponds to the $L^p$-realization of $\mathcal P$, $\F$ is the superposition operator generated by $f$ and $\K=\{u\in L^p\mid u(x)\in K(x)\;\text{a.e}\}$. Even though $\A$ is resolvent compact,  the existence of solutions to $(3)$  can hardly be obtained via a direct use of the Leray--Schauder theory. There are two difficulties: firstly, in general $\F$ is only locally bounded and $\K^\alpha$ is unbounded, secondly, $\K$ and $\K^\alpha$ have empty interiors. Moreover, neither $\A$ nor $\F$ maps $\K$ (or $\K^\alpha$) into $\K$ so the Leray--Schauder fixed point index of maps on absolute neighborhood retracts  cannot be employed, too.
We propose instead to apply a topological invariant responsible for the existence of solutions of $(3)$. The construction of the so-called {\em constrained topological degree} is the next subject of the paper, see Subsection \ref{constrained degree}.
It relies on the assumption of  the so-called {\em tangency} of $\F$ and the {\em invariance} of $\K$  with respect to the semigroup generated by $-\A$.\\
\indent A similar abstract scheme has also been considered in \cite{KryszewskiSiemianowski}, where $\K$ and $\F(\K^\alpha)$ are assumed to be bounded. Roughly speaking, the existence results obtained in \cite{KryszewskiSiemianowski} were based on  approximations similar to \eqref{defdeg} but based on  the Schauder fixed-point theorem rather. However, the boundedness assumptions do restrict the applications significantly.

The paper is organized as follows. We recall some standard notation and introduce the problem below. In the case of a bounded  domain, the existence of constrained solutions to the  problem is established in Section 2 (see Theorem \ref{thm:3.1}). The case $\Omega=\R^N$ is studied and the main result (Theorem \ref{main th}) is proved in the third section. In the last section we recall some relevant information about sectorial operators, we discuss the invariance of closed convex sets (see, e.g., Propositions \ref{invariance 1}, \ref{invariance 2}) , we study some examples of the sets of constraints and, in Subsection \ref{constrained degree}, we provide the construction of the constrained topological  degree and its properties.
\subsection{Preliminaries}
Throughout the paper $\R^N$ denotes the standard $N$-dimensional real Euclidean space and $\R^{M\times N}$ the space of  $M\times N$ real matrices. The norm in $\R^N$ or $\R^{M\times N}$ is denoted by $|\cdot|$; the scalar product in $\R^M$ (resp. the Frobenius product in $\R^{M\times N}$) is denoted by $\la\cdot,\cdot\ra$. For example if $\xi,\zeta\in\R^{M\times N}$, then $\la\xi,\zeta\ra:=\sum_{k=1}^M\sum_{l=1}^N\xi_{kl}\zeta_{kl}$ and $|\xi|^2:=\la\xi,\xi\ra$. By $\prescript\intercal{}\!A$ we denote the transpose of a matrix $A$.\\
\indent   By $\Omega\subseteq\R^N$ we usually denote a  bounded domain of class $C^2$. Given a locally integrable map $u=(u_1,\ldots,u_M)$ from $\Omega$ to $\R^M$, $\part u$ is the distributional Jacobian matrix of $u$, i.e., $\part u:=[\part_iu_k(\cdot)]_{i=1,\ldots,N}^{k=1,\ldots,M}\in\R^{M\times N}$, where
$\part_iu_k:=\frac{\part}{\part x_i}u_k$ is the $i$-th partial derivative understood in the sense of distributions; $\part_iu:=[\part_iu_k]_{k=1}^M$ is the $i$-th column of $\part u$. Given a multi-index $\alpha\in\Z_+^N$, $\part ^\alpha:=\part_1^{\alpha_1}\ldots\part^{\alpha_N}_N$ and $|\alpha|=\sum_{i=1}^N\alpha_i$. \\
\indent  $L^p(\Omega,\R^M)$, $1\leq p<\infty$, denotes the space of measurable functions $u:\Omega\to\R^M$ such that $|u|^p$ is Lebesgue integrable with the standard norm $\|u\|_{L^p(\Omega,\R^M)}:=(\int_\Omega |u(x)|^p\,\d x)^{1/p}$; $L^\infty(\Omega,\R^M)$ is the space of measurable functions $u:\Omega\to\R^M$ with $\|u\|_{L^\infty(\Omega,\R^M)}:=\esssup_{x\in \Omega}|u(x)|<\infty$.\\
\indent $W^{k,p}(\Omega,\R^M)$ (resp. $W^{k,p}_0(\Omega,\R^M)$), $k\in \N$, $1\leq p\leq\infty$,  stands for the Sobolev space of functions $u: \Omega\to \R^M$ having weak partial derivatives up to order $k$ in  $L^p(\Omega,\R^M)$ (resp. and vanish at the boundary, in the sense of the trace) with the standard norm
$$\|u \|_{W^{k,p}(\Omega,\R^M)}: =
\left(\sum_{|\alpha| \leq k}\|\part^\alpha u\|^p_{L^p}\right)^{1/p}\; \text{if}\; p<\infty, \; \text{ and}\; \|u \|_{W^{k,\infty}(\Omega,\R^M)}=\sum_{|\alpha|\leq k}\esssup_{x\in \Omega}|\part^\alpha u(x)|.$$
We write $H^k$ (resp. $H^k_0$) instead of $W^{k,2}$ (resp. $W^{k,2}_0$); clearly $H^1_0(\R^N,\R^M)=H^1(\R^N,\R^M)$. By $H^{-1}(\Omega,\R^M)$ we denote the dual of $H^1_0(\Omega,\R^M)$.\\
\indent  It is  convenient to consider the seminorms $|\cdot|_{j,p,\Omega}$, where  $0\leq j \leq k$, in $W^{k,p}(\Omega,\R^M)$ putting
\[
|u|_{j,p,\Omega} := \le \sum_{|\alpha| = j}\|\part^\alpha u\|_{L^p}^p\pr ^{1/p}.\]
 If $M=1$, then symbol $\R^M$ will be suppressed form the notation concerning spaces.
\subsection{The problem and general assumptions} We  study the existence of solutions to the following system of elliptic equations
\be\label{main problem}\mathcal P[u]=f(x,u,\part u),\;\;x\in\Omega,\ee
where either $\Omega=\R^N$ or $\Omega$ is a bounded  domain in $\R^N$ and $u=(u_1,\ldots,u_M):\Omega\to\R^M$. If $\Omega$ is bounded, then \eqref{main problem} is studied {\em subject to the Dirichlet boundary condition} $u|_{\part\Omega}=0$. In both cases we are interested in the existence of solutions $u$ to \eqref{main problem} such that
\be\label{cons}u(x)\in K(x)\;\;\text{for a.a}\;\;x\in\Omega,\ee
where $K(x)\subset\R^M$  for each $x\in\Omega$.
General assumptions  \ref{constraint} concerning $K(\cdot)$, \ref{nonlinear} concerning $f$ and \ref{differential operator} concerning $\mathcal P$ are presented below. These assumptions will be discussed, illustrated and appropriately complemented in the following subsections.
\begin{assumption}\label{constraint} {\em We assume that:
\begin{enumerate}
\item for each $x\in\Omega$, $K(x)$ is nonempty {\em closed and convex} subset of $\R^M$;
\item for any open  $U\subset\R^M$, the set $\{x\in\Omega\mid K(x)\cap U\neq\emptyset\}$ is (Lebesgue) {\em measurable} (\footnote{This means that the set-valued map $\Omega\ni x\mapsto K(x)\subset\R^M$ is {\em measurable}; see \cite{Aubin} for details.});
\item there is $m\in L^2(\Omega)$ such that $\sup_{u\in K(x)}|u|\leq m(x)$  for a.a. $x\in\Omega.$
    \end{enumerate}
}\end{assumption}
For $x\in\Omega$,  let $r(x,\cdot)$ be the metric projection of $\R^M$ onto $K(x)$:
\be\label{retra}|u-r(x,u)|=d(u,K(x)):=\inf_{w\in K(x)}|u-w|,\; u\in\R^M.\ee
Then $r:\Omega\times\R^M\to\R^M$ is well-defined; for any $x\in\Omega$, $r(x,\cdot)$ is nonexpansive and, hence,
$$|r(x,u)|\leq d(0,K(x))+|u|\leq m(x)+|u|$$ for all $u\in\R^M$ and a.a. $x\in\Omega$. By (2) and \cite[Cor. 8.2.13]{Aubin}, for any $u\in\R^M$, $r(\cdot,u)$ is measurable. Therefore, in view of the Krasnosel'skii theorem on superpositions, the Nemytski operator $u\mapsto r(\cdot,u(\cdot))$ maps $L^2(\Omega,\R^M)$ continuously into itself. Given $u\in L^2(\Omega,\R^M)$, $r(\cdot,u(\cdot))$ is an $L^2$-selection of $K(\cdot)$, i.e., $r(x,u(x))\in K(x)$ for a.a. $x\in\Omega$.\\
\indent In Subsection \ref{co} we present some examples of $K(\cdot)$ satisfying the above conditions.
\begin{assumption}\label{nonlinear} {\em Let $f:\Gr(K)\times\R^{M\times N}\to\R^M$, where $\Gr(K):=\{(x,u)\in\Omega\times\R^M\mid u\in K(x)\}$ is the {\em graph} of $K(\cdot)$. We assume that
\begin{enumerate}
\item $f$ is a {\em Carath\'eodory map}, i.e., for a.a. $x\in\Omega$, $f(x,\cdot):K(x)\times\R^{M\times N}\to\R^M$ is continuous and for all $u\in\R^M$, $\xi\in\R^{M\times N}$ the map $f(\cdot,u,\xi)$ defined on $\{x\in\Omega\mid u\in K(x)\}$ is measurable (\footnote{Observe that in view of assumption \ref{constraint} (2) the set $\{x\in\Omega\mid u\in K(x)\}$ is measurable for any $u\in\R^M$, so the condition makes sense.});
\item there are $\beta\in L^2(\Omega)$, $c>0$, $1\leq s<\frac{N+4}{N}$ and $1\leq q< \frac{N+4}{N+2}$ such that
\be\label{growth}
|f(x,u,\xi)| \leq \beta(x) + c(|u|^s + |\xi|^q),\quad x\in\Omega,\;u\in K(x),\;\xi\in\R^{M\times N};
\ee
\item $f(x,\cdot,\cdot)$ is {\em tangent} to $K(x)$, i.e., for a.a. $x\in \Omega$ and for all $u\in K(x)$, $\xi\in\R^{M\times N}$
\be\label{tangency}f(x,u,\xi)\in T_{K(x)}(u),\ee
where $T_{K(x)}(u)$ is the tangent cone to the set $K(x)$ at $u$ (see Section \ref{sto} for the definition of the tangent cone).
\end{enumerate}
}\end{assumption}
\noindent Condition (3) means that for all $x\in\Omega$, $u\in K(x)$ the forcing vector field driven by $f(x,u,\cdot)$ with its tain at $u$ is directed {\em inward} the set $K(x)$. It will be illustrated and discussed  below (see Sections 4.2 and 4.3).  Note that the growth condition (2) is considered on $\Gr(K)$ only. If, for instance $\Omega$ is bounded, $K(\cdot)=K$, where $K\subset \R^M$ is convex and compact,  $f$ is continuous and depends only on $u$, then no growth condition is necessary at all.
\begin{assumption}\label{differential operator}{\em By $\mathcal P$ we denote a linear differential operator in the divergence form
\be\label{diffop}\mathcal P[u]:=-\sum_{i,j=1}^N\part_i\left(A^{ij}(x)\part_ju\right)+\sum_{i=1}^NB^i(x)\part_iu+
C(x)u,\ee
where the coefficients $A^{ij}=[A^{ij}_{kl}]_{k,l=1}^M$, $B^i=[B^i_{kl}]_{k,l=1}^M$ and $C=[C_{kl}]_{k,l=1}^M$ are functions from $\Omega$ into $\R^{M\times M}$ such that   $A^{ij}_{kl}\in C^{0,1}(\Omega)\cap L^\infty(\Omega)$, $B^i_{kl}, C_{kl}\in L^\infty(\Omega)$ for $1\leq i,j\leq N$, $1\leq k,l\leq M$.
}\end{assumption}

The operator $\mathcal P$ acts on a column vector-valued function $u=\prescript\intercal{}(u_1,\ldots,u_M)$ in the sense of distributions returning the vector-valued function $\mathcal P[u]:\Omega\to\R^M$ with components
$$\mathcal P[u]_k=-\sum_{i,j=1}^N\sum_{l=1}^M\part_i\left(A^{ij}_{kl}\part_ju_l\right)
+\sum_{i=1}^N\sum_{l=1}^MB^i_{kl}\part_iu_l+\sum_{l=1}^MC_{kl}u_l,\;\;k=1,\ldots,M.$$
\indent With  $\mathcal P$ we associate a bilinear form $\mathcal B$ on $H^1_0(\Omega,\R^M)$ given by
\begin{align}\label{form}\mathcal B[u,v]:&=\int_\Omega\left(\sum_{i,j=1}^N\la A^{ij}\part_ju,\part_iv\ra+\sum_{i=1}^N\la B^i\part_iu,v\ra+\la Cu,v\ra\right)\d x\\
\nonumber &=\int_\Omega\left(\sum_{i,j=1}^N\sum_{k,l=1}^M
A_{kl}^{ij}\part_ju_l\part_iv_k+\sum_{i=1}^N\sum_{k,l=1}^M
B^i_{kl}\part_iu_l v_k+\sum_{k,l=1}^MC_{kl}u_l v_k\right)\d x\end{align}
for $u,v\in H^1_0(\Omega,\R^M)$. Clearly $\mathcal B$ is bounded
$$|\mathcal B[u,v]|\leq c\|u\|_{H^1(\Omega,\R^M)}\|v\|_{H^1(\Omega,\R^M)} (\footnote{We adopt the usual convention to denote by $c$ a {\em general} positive constant, that may vary from line to line. Peculiar dependence on parameters will be emphasized in parantheses when needed.}),$$
where the constant depends on the $L^\infty(\Omega)$-norms of $A^{ij}_{kl}$, $B^i_{kl}$ and $C_{kl}$. Observe that for $u\in H^2_{loc}(\Omega,\R^M)$ the expression $\mathcal P[u](x)$ makes sense for a.a. $x\in\Omega$  and ${\mathcal P}[u]\in L^2_{loc}(\Omega,\R^M)$; if
$u\in H^2(\Omega,\R^M)\cap H_0^1(\Omega,\R^M)$, then, by the Green identity, ${\mathcal B}[u,v]=\la {\mathcal P}[u],v\ra_{L^2}$ for any $v\in H^1_0(\Omega,\R^M)$.

\begin{definition}\label{solution}{\em We say that $u\in H^1_0(\Omega,\R^M)$ is a {\em weak solution} of \eqref{main problem} if for any $v\in H_0^1(\Omega,\R^M)$
$$\mathcal B[u,v]=\int_\Omega \la f(x,u,\part u),v\ra\d x.$$
If $u\in H^1_0(\Omega,\R^M)\cap H^2(\omega;\R^M)$ for any bounded open $\omega\subset\Omega$  and $\mathcal P[u]=f(\cdot,u,\part u)$ a.e. in $\Omega$, then $u$ is a called a {\em strong} solution.
}\end{definition}

\begin{remark}\label{about sol}{\em (i) Note that the definition of a weak solution makes sense since, for any $u,v\in H^1_0(\Omega,\R^M)$, $\la f(\cdot,u.\part u),v\ra\in L^1(\Omega)$. Indeed, in view of $(f_2)$,
\be\label{tosol}\int_\Omega\la f(x,u,\part u),v\ra\d x\leq \int_\Omega|f(x,u,\part u)||v|\d x\leq c(I_1+I_2+I_3),\ee
where
$$I_1:=\int_\Omega\beta|v|\d x\leq\|\beta\|_{L^2(\Omega)}\|v\|_{L^2(\Omega,\R^M)},
\;\;I_2:=
\int_\Omega|u|^s|v|\d x,\;\;  I_3:=\int_\Omega|\part u|^q|v|\d x.$$
In order to estimate $I_3$, let $p:=2q^{-1}$. Then $p>1$ and let $p':=p(p-1)^{-1}=2(2-q)^{-1}$. It is easy to see that $p'\in [2,2^\ast)$ (\footnote{Given $1\leq p<N$, $p^\ast$ stands for the Sobolev critical exponent, i.e., $\frac{1}{p^\ast}=\frac{1}{p}-\frac{1}{N}$, in particular $2^\ast=\frac{2N}{N-2}$.}). Since $H^1(\Omega,\R^M)\subset L^{p'}(\Omega,\R^M)$, by the H\"older inequality we have
\begin{align*}I_3&\leq \left(\int_\Omega|\part u|^{pq}\d x\right)^{1/p}\left(\int_\Omega|v|^{p'}\d x\right)^{1/p'}\leq c\|\part u\|^q_{L^2(\Omega,\R^M)}\|v\|_{H^1(\Omega,\R^M)}\\ &\leq c\|u\|_{H^1(\Omega,\R^M)}^q\|v\|_{H^1(\Omega,\R^M)}.\end{align*}
Now suppose that $1\leq s<\frac{N+2}{N}$ and let $t:=2s^{-1}>1$ and $t':=t(t-1)^{-1}$. It is easy to see that $t'\in [2,2^\ast)$. Hence $v\in H^1(\Omega,\R^M)\subset L^{t'}(\Omega,\R^M)$ and, by the H\"older inequality
$$I_2\leq c\|u\|^s_{L^2(\Omega,\R^M)}\|v\|_{H^1(\Omega,\R^M)}.$$
For $\frac{N+2}{N}\leq s<\frac{N+4}{N}$ we let $t:=2^\ast(2^\ast-1)^{-1}$ and $t':=2^\ast$. Then $ts\in [2,2^\ast)$ so, by the H\"older inequality
$$I_2\leq\left(\int_\Omega|u|^{ts}\d x\right)^{1/t}\left(\int_\Omega|v|^{t'}\right)^{1/t'}\leq c\|u\|^s_{H^1(\Omega,\R^M)}\|v\|_{H^1(\Omega,\R^M)}.$$
\indent Summing up, \eqref{tosol} implies
that given $u\in H^1(\Omega,\R^M)$, a mapping  $H^1(\Omega,\R^M)\ni v\mapsto \ell(v):=\la f(\cdot,u,\part u),v\ra_{L^2}$ is a well defined linear functional and $|\ell(v)|\leq c\|v\|_{H^1(\Omega,\R^M)}$. Thus, $u\in H^1_0(\Omega,\R^M)$ is a weak solution if and only if
$$\mathcal B[u,\vp]=\int_\Omega \la f(x,u,\part u),\vp\ra\d x$$
for any $\vp\in C^\infty_0(\Omega,\R^M)$.\\
\indent (ii) The definition of a strong solution also makes sense since, arguing as above, if $u\in H^2_{loc}(\omega,\R^M)$ for some $\omega\subset\Omega$, then $f(\cdot,u,\part u)\in L^2(\omega,\R^M)$. Clearly strong solutions are weak. If a weak solution $u$ belongs to $H^2(\omega,\R^M)$ for any bounded open $\omega\subset\Omega$, then $u$ is a strong solution. In particular if $\Omega$ is bounded, then a weak solution $u\in H^1_0(\Omega,\R^M)$ is strong if and only if $u\in H^2(\Omega,\R^M)$.
}\end{remark}

\section{Bounded domain}\label{bdd}

In this section $\Omega$ is a {\em smooth bounded} domain in $\R^N$. We are going to establish the existence of strong solutions to \eqref{main problem}, \eqref{cons}, i.e.,
\be\label{eq:3.1}{\mathcal P}[u]=f(x,u,\part u),\;\; u|_{\part\Omega}=0,\ee
such that
\be\label{eq:3.2} u(x)\in K(x)\;\; \text{for a.a}\; x\in\Omega,\ee
where $K(\cdot)$, $f$ and $\mathcal P$ satisfy assumptions \ref{constraint}, \ref{nonlinear} and \ref{differential operator}, respectively.
\begin{assumption}\label{additional ass}{\em We make the following additional assumptions
\begin{enumerate}
\item the operator \eqref{diffop} is {\em elliptic} in the sense of the {\em Legendre-Hadamard condition}, i.e., there is {\em an ellipticity constant} $\theta>0$ such that for any $\zeta\in\R^N$ and $p\in\R^M$
    \be\label{ellipticity}\sum_{i,j=1}^N\la A^{ij}\zeta_jp,\zeta_ip\ra=\sum_{i,j=1}^N\sum_{k,l=1}^MA^{ij}_{kl}(x)
p_k p_l\zeta_i\zeta_j\geq\theta|p|^2|\zeta|^2.\ee
\item the graph $\Gr(K)$ is {\em viable} (or invariant) with respect to the {\em `diffusion' flow}; this means that, for fixed $T>0$ given $u_0\in L^2(\Omega,\R^M)$ such that $u_0(x)\in K(x)$ for a.a. $x\in\Omega$, the weak solution $u:[0,T]\times \Omega\to\R^M$ to the corresponding {\em linear parabolic} Cauchy problem
\be\label{parabolic}u_t+\mathcal P[u]=0,\;\;u(0,\cdot)=u_0,\;u|_{\part\Omega}=0\ee
stays in $\Gr(K)$, i.e., $u(t,x)\in K(x)$  for all $t> 0$ and a.a. $x\in\Omega$.
\end{enumerate}
}\end{assumption}
\noindent Recall that a function $u\in L^2(0,T;H^1_0(\Omega,\R^M))$ such that $u'\in L^2(0,T;H^{-1}(\Omega,\R^M))$  is a {\em weak solution} to \eqref{parabolic} if $u(0)=u_0$ and $[u'(t),v]+{\mathcal B}[u(t),v]=0$ for a.a. $t\in [0,T]$ and any $v\in H_0^1(\Omega,\R^M)$,
where $[\cdot,\cdot]$ is the duality pairing between $H^{-1}(\Omega,\R^M)$ and $H^1_0(\Omega,\R^M)$.
\begin{remark}\label{cons of additional}{\em (i) The flow invariance granted by the above condition (2) is a strong assumption.  In Corollary \ref{invariance 3} we shall show that it holds if and only if
$$r(\cdot,u(\cdot))\in H_0^1(\Omega,\R^M)\;\;\text{and}\;\; \mathcal B[r(\cdot,u(\cdot)),u-r(\cdot,u(\cdot))]\geq 0$$
for every $u\in H^1_0(\Omega,\R^M)$ (see \eqref{retra}). The flow invariance  will be illustrated and carefully discussed from different points of view below; see Subsection \ref{INV}, Propositions \ref{invariance 1}, \ref{invariance 2}, \ref{invev} and \ref{invev 1}  for details.\\
\indent (ii) The Legendre-Hadamard condition and its stronger form, the Legendre condition or {\em strong ellipticity}, is discussed in, e.g.,  \cite{Giaquinta} or \cite{McLean}. Let us recall  the following important consequence of ellipticity, the so-called G{\aa}rding inequality (see \cite[Theorem 3.42]{Giaquinta}, \cite[Theorem 4.6]{McLean} and \cite{Guesmia}). \\
\indent {\em If the operator $\mathcal P$ is elliptic, then the form $\mathcal B$ is {\em weakly coercive}, i.e., there are $\omega\in\R$ and $\alpha>0$ such that $\mathcal B[u,u]+\omega\|u\|_{L^2(\Omega,\R^M)}\geq\alpha\|u\|_{H^1(\Omega,\R^M)}$ for $u\in H_0^1(\Omega,\R^M).$}
}\end{remark}

In order to proceed we first establish {\em a priori} bounds for solutions to \eqref{eq:3.1}, \eqref{eq:3.2}. As in many other situations the existence of {\em a priori} bounds along with an appropriate continuation procedure will guarantee the existence.
\begin{lemma}\label{lem:3.1}
There is  $C_1>0$ (depending on $\Omega$, the ellipticity constant and the $C^{0,1}$-norms of  $A^{ij}$, $1\leq i,j\leq N$)  such that $\|u\|_{H^2(\Omega,\R^M)} \leq C_1$\, for every strong solution $u$ of \eqref{eq:3.1} satisfying \eqref{eq:3.2}.
\end{lemma}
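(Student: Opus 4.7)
The plan is to combine the classical $L^2$ a priori regularity estimate for strong solutions of second-order elliptic systems on a smooth bounded domain with the Gagliardo--Nirenberg interpolation inequality. The growth exponents $s<(N+4)/N$ and $q<(N+4)/(N+2)$ appearing in \eqref{growth} are precisely those for which the nonlinear terms $|u|^s$ and $|\partial u|^q$, measured in $L^2(\Omega)$, are strictly \emph{subcritical} in the interpolation scale between $L^2$ and $H^2$, so that Young's inequality will close the estimate.

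From assumption \ref{constraint}(3) and \eqref{eq:3.2} we have $|u(x)|\leq m(x)$ for a.a.\ $x\in\Omega$, so
\begin{equation*}
\|u\|_{L^2(\Omega,\R^M)}\leq\|m\|_{L^2(\Omega)},
\end{equation*}
which serves as the low-order anchor. Since $u\in H^1_0(\Omega,\R^M)\cap H^2(\Omega,\R^M)$ is a strong solution, $\Omega$ is smooth and bounded, $A^{ij}\in C^{0,1}(\Omega)$, $B^i,C\in L^\infty(\Omega)$, and $\mathcal P$ satisfies the Legendre--Hadamard condition \eqref{ellipticity}, the classical $L^2$ regularity estimate for Dirichlet problems for second-order elliptic systems (see, e.g., \cite{Giaquinta} or \cite{McLean}) yields
\begin{equation*}
\|u\|_{H^2(\Omega,\R^M)}\leq C_0\bigl(\|\mathcal P[u]\|_{L^2(\Omega,\R^M)}+\|u\|_{L^2(\Omega,\R^M)}\bigr),
\end{equation*}
with $C_0$ depending only on $\Omega$, the ellipticity constant $\theta$ and the $C^{0,1}$ (resp.\ $L^\infty$) norms of the leading (resp.\ lower-order) coefficients of $\mathcal P$.

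Substituting $\mathcal P[u]=f(\cdot,u,\partial u)$ and using the growth condition \eqref{growth},
\begin{equation*}
\|f(\cdot,u,\partial u)\|_{L^2}\leq\|\beta\|_{L^2}+c\,\|u\|_{L^{2s}}^{s}+c\,\|\partial u\|_{L^{2q}}^{q}.
\end{equation*}
The key ingredient is the Gagliardo--Nirenberg inequality, which gives
\begin{equation*}
\|u\|_{L^{2s}}^{s}\leq c\,\|u\|_{H^2}^{a_1}\|u\|_{L^2}^{\,s-a_1},\qquad \|\partial u\|_{L^{2q}}^{q}\leq c\,\|u\|_{H^2}^{a_2}\|u\|_{L^2}^{\,q-a_2},
\end{equation*}
with $a_1:=N(s-1)/4$ and $a_2:=(q(N+2)-N)/4$. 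A direct computation shows that the bounds $s<(N+4)/N$ and $q<(N+4)/(N+2)$ in \eqref{growth} are precisely equivalent to $a_1<1$ and $a_2<1$. Using $\|u\|_{L^2}\leq\|m\|_{L^2}$ and collecting constants one arrives at an inequality of the shape
\begin{equation*}
\|u\|_{H^2(\Omega,\R^M)}\leq C_1'\bigl(1+\|u\|_{H^2(\Omega,\R^M)}^{\,a_1}+\|u\|_{H^2(\Omega,\R^M)}^{\,a_2}\bigr),
\end{equation*}
where $C_1'$ now also depends on $\|\beta\|_{L^2}$ and $\|m\|_{L^2}$. Since $a_1,a_2\in(0,1)$, a single application of Young's inequality (or, equivalently, the elementary fact that a nonnegative scalar $t$ with $t\leq C(1+t^{a_1}+t^{a_2})$ is bounded) absorbs the nonlinear right-hand side and produces the claimed uniform bound $\|u\|_{H^2(\Omega,\R^M)}\leq C_1$.

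The only delicate point in the plan is invoking the $L^2$ regularity estimate for the elliptic \emph{system} under the Legendre--Hadamard condition rather than the stronger Legendre (strong ellipticity) condition; however, for smooth $\Omega$ and Lipschitz leading coefficients this is classical and can be derived from the G\aa rding inequality recalled in Remark \ref{cons of additional} together with the standard difference-quotient argument, first in the interior and then near the boundary after local flattening. Everything else is a routine combination of interpolation and Young's inequality.
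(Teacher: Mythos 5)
Your proposal is correct and follows essentially the same strategy as the paper: bound $\|u\|_{L^2}$ via the constraint, invoke the $L^2$ elliptic regularity estimate to reduce to bounding $\|f(\cdot,u,\partial u)\|_{L^2}$, interpolate the nonlinear terms between $L^2$ and $H^2$ with exponents strictly less than one thanks to $s<(N+4)/N$ and $q<(N+4)/(N+2)$, and absorb. The only difference is cosmetic: the paper derives the interpolation bound by hand (Lebesgue interpolation against $L^{2^\ast}$ or $L^{2^{\ast\ast}}$, split into cases $N\leq 4$ and $N\geq 5$, combined with the Ehrling--Browder inequality), whereas you invoke the Gagliardo--Nirenberg inequality directly, which packages the same content into a single step and avoids the case distinction.
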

\begin{proof}
Let $u\in H^1_0(\Omega,\R^M)\cap H^2(\Omega,\R^M)$ be a solution of \eqref{eq:3.1} satisfying \eqref{eq:3.2}. By $(K_3)$
\begin{equation}\label{eq:3.3}
\|u\|_{L^2(\Omega,\R^M)}\leq M_0:=\|m\|_{L^2}.
\end{equation}
Assumption $(f_2)$ shows that $f(\cdot,u,\part u)\in L^2(\Omega,\R^M)$. Indeed
\be\label{szac a}
\|f(\cdot,u,\part u)\|_{L^2(\Omega,\R^M)}\leq c(\|\beta\|_{L^2(\Omega)} + I_1+I_2)\ee
for some constant $c>0$, where
$$I_1:=\le\int_\Omega |u|^{2s}\d x\pr ^{1/2},\;\;I_2:=\le \int_\Omega|\part u|^{2q}\d x\pr^{1/2}.$$
\indent To estimate $I_1$ we consider two cases $3\leq N\leq 4$ and $N\geq 5$ separately. If $3\leq N \leq 4$, then
\[
2s<\frac{2N+8}{N}\leq 2^\ast,
\]
and the interpolation inequality with exponents $2s\in [2,2^\ast)$, $\theta_1\in [0,1)$
\begin{equation}\label{eq:3.6}
\frac{1}{2s}= \frac{1-\theta_1}{2} + \frac{\theta_1}{2^\ast}\iff \frac{s}{2}\theta_1 = \frac{N}{4}(s-1)
\end{equation}
shows
\[
I_1  = \|u\|_{L^{2s}(\Omega,\R^M)}^s\leq \|u\|_{L^2(\Omega,\R^M)}^{s(1-\theta_1)}\|u\|_{L^{2^\ast}(\Omega,\R^M)}^{s\theta_1}.
\]
By \eqref{eq:3.3} and by the continuity of the embedding $H^1(\Omega,\R^M)\hookrightarrow L^{2^\ast}(\Omega,\R^M)$
\[
I_1\leq c M_0^{s(1-\theta_1)}\|u\|_{H^1(\Omega,\R^M)}^{s\theta_1}.
\]
The Ehrling--Browder inequality (see \cite[Thm 4.17]{Adams}) implies that
\[
\|u\|_{H^1(\Omega,\R^M)}^{s\theta_1}\leq c \|u\|_{H^2(\Omega,\R^M)}^{\frac{s}{2}\theta_1}
\|u\|_{L^2(\Omega,\R^M)}^{\frac{s}{2}\theta_1}.
\]
Hence and again from \eqref{eq:3.3}
\be\label{szac d}
I_1\leq c\|u\|_{H^2(\Omega,\R^M)}^{\gamma_1},
\ee
where $\gamma_1:=\frac{s}{2}\theta_1<1$, since \eqref{eq:3.6} and $s<\frac{N+4}{N}$.\\
\indent If $N\geq 5$, then we use the interpolation inequality with exponents $2s\in [2, 2^{\ast\ast})$(\footnote{The symbol $2^{\ast\ast}$ stands for $\le 2^\ast\pr ^\ast$, i.e., $2^{\ast\ast} = \frac{2N}{N-4}$.}), $\tilde\theta_1\in [0,1)$ satisfying
\begin{equation}\label{eq:3.7}
\frac{1}{2s}=\frac{1-\tilde\theta_1}{2}+\frac{\tilde\theta_1}{2^{\ast\ast}}\iff s\tilde\theta_1 = \frac{N}{4}(s-1)
\end{equation}
to get that
\[
I_1=\|u\|_{L^{2s}(\Omega,\R^M)}^s \leq\|u\|_{L^2(\Omega,\R^M)}^{s(1-\tilde\theta_)}
\|u\|_{L^{2^{\ast\ast}}(\Omega,\R^M)}^{s\tilde\theta_1}.
\]
The continuity of the embedding $H^2(\Omega,\R^M)\hookrightarrow L^{2^{\ast\ast}}(\Omega,\R^M)$ and \eqref{eq:3.3} yield
\be\label{szac e}
I_1\leq c\|u\|_{H^2(\Omega,\R^M)}^{\tilde\gamma_1},\ee
where $\tilde\gamma_1:=s\tilde\theta_1 < 1$ since \eqref{eq:3.7} and $s<\frac{N+4}{N}$.\\
\indent We now estimate $I_2$: the interpolation inequality with exponents $2q\in [2,2^\ast)$, $\theta_2\in [0,1)$ such that
\begin{equation}\label{eq:3.5}
\frac{1}{2q}= \frac{1-\theta_2}{2}+ \frac{\theta_2}{2^\ast}\iff \frac{q}{2}\theta_2 = \frac{N}{4}(q-1)
\end{equation}
shows that
\[
I_2= \||\part u|\|_{L^{2q}(\Omega)}^q\leq \||\part u| \|_{L^2(\Omega)}^{q(1-\theta_2)}\||\part u|\|^{q\theta_2}_{L^{2^\ast}(\Omega)}.
\]
Both right hand side factors are estimated separately.
We have
\[
\||\part u| \|_{L^2(\Omega)}^{q(1-\theta_2)}= \|\part u \|_{L^2(\Omega,\R^M)}^{q(1-\theta_2)}\leq \| u \|_{H^1(\Omega,\R^M)}^{q(1-\theta_2)}
\]
We use the Ehrling--Browder inequality
\[
\|u\|_{H^1(\Omega,\R^M)}\leq c\|u\|_{H^2(\Omega,\R^M)}^{1/2} \|u\|_{L^2(\Omega,\R^M)}^{1/2}
\]
and \eqref{eq:3.3} to deduce
\[
\||\part u|\|_{L^2(\Omega)}^{q(1-\theta_2)}\leq c \|u\|_{H^2(\Omega,\R^M)}^{\frac{q}{2}(1-\theta_2)}M_0^{\frac{q}{2}(1-\theta_2)}.
\]
By \cite[Cor. 4.6]{ArendtKreuter}, $|\part u|\in H^1(\Omega)$ and $\| |\part u|\|_{H^1(\Omega)}\leq \|\part u\|_{H^1(\Omega,\R^M)}\leq c\|u\|_{H^2(\Omega,\R^M)}$.
Therefore and by the continuity of the embedding $H^1(\Omega)\subset L^{2^\ast}(\Omega)$
\[
\||\part u|\|_{L^{2^\ast}(\Omega)}^{q\theta_2}\leq c \| |\part u|\|_{H^1(\Omega)}^{q\theta_2}\leq c\|u\|_{H^2(\Omega,\R^M)}^{q\theta_2}.
\]
Summing up
\be\label{szac b}
I_2\leq c\|u\|_{H^2(\Omega,\R^M)}^{\gamma_2},\;\text{where}\; \gamma_2:=\frac{q}{2}(1+\theta_2).
\ee
The inequality $q<\frac{N+4}{N+2}$ together with \eqref{eq:3.5} yield $\gamma_2<1$.\\
\indent Using the regularity theory (see \cite[Theorem 4.14]{Giaquinta} and  \cite[Thm 8.12]{Gilbarg}, cf. Lemma \ref{lem:4.3})
$$
\|u\|_{H^2}\leq c(\|f(\cdot,u,\part u)\|_{L^2(\Omega,\R^M)} + \|u\|_{L^2(\Omega,\R^M)}),
$$
where the constant depends on $\Omega$, the ellipticity constant and $C^{0,1}$-norms of the coefficients of $\mathcal P_0$.
Therefore, in view of \eqref{eq:3.3}, \eqref{szac a}, \eqref{szac d}, \eqref{szac e} and \eqref{szac b}, we see that there is $c>0$ and $\gamma \in (0,1)$  such that every strong solution $u$ of \eqref{eq:3.1} and \eqref{eq:3.2} satisfies $\|u\|_{H^2(\Omega,\R^M)}\leq c\le 1+\|u\|_{H^2(\Omega,\R^M)}^\gamma\pr$
and the assertion follows. \end{proof}
Again by \eqref{szac a} along with  \eqref{szac d}, \eqref{szac e}, \eqref{szac b}, and Lemma \ref{lem:3.1} we have
\begin{corollary}\label{sols 0}
There is $C_2>0$ such that $\|f(\cdot,u,\part u)\|_{L^2(\Omega,\R^M)}\leq C_2$ \,for every strong solution $u$ of \eqref{eq:3.1} satisfying \eqref{eq:3.2}.\hfill $\square$
\end{corollary}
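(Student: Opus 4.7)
The corollary is essentially a repackaging of estimates already produced inside the proof of Lemma \ref{lem:3.1}, so my plan is to reuse them rather than generate any new analysis. I would begin with inequality \eqref{szac a}, which gives $\|f(\cdot,u,\partial u)\|_{L^2(\Omega,\R^M)} \leq c(\|\beta\|_{L^2(\Omega)} + I_1 + I_2)$ for every strong solution $u$ of \eqref{eq:3.1} satisfying \eqref{eq:3.2}; here the constant depends only on the structural data and $c$ from $(f_2)$.

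Next I would invoke the power-type bounds established in the lemma for each of $I_1$ and $I_2$. Depending on the dimension, either \eqref{szac d} (for $3\leq N\leq 4$) or \eqref{szac e} (for $N\geq 5$) provides $I_1 \leq c\|u\|_{H^2(\Omega,\R^M)}^{\gamma_1}$, and \eqref{szac b} yields $I_2 \leq c\|u\|_{H^2(\Omega,\R^M)}^{\gamma_2}$. At this stage one simply applies Lemma \ref{lem:3.1} itself to replace $\|u\|_{H^2(\Omega,\R^M)}$ by the uniform constant $C_1$ on the right-hand sides.

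There is no genuine obstacle here — all the analytic work was completed in the proof of Lemma \ref{lem:3.1}, and the present statement is merely a bookkeeping corollary. One sets $C_2 := c\bigl(\|\beta\|_{L^2(\Omega)} + C_1^{\gamma_1} + C_1^{\gamma_2}\bigr)$ (using $\tilde\gamma_1$ in place of $\gamma_1$ when $N \geq 5$), and the asserted bound follows at once. The only thing to double-check is that the exponents $\gamma_1$ (or $\tilde\gamma_1$) and $\gamma_2$ are finite, which is automatic from the strict inequalities $s<(N+4)/N$ and $q<(N+4)/(N+2)$ recorded in $(f_2)$.
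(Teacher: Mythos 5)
Your argument is exactly the one the paper gives: the paper's proof consists of the single sentence ``Again by \eqref{szac a} along with \eqref{szac d}, \eqref{szac e}, \eqref{szac b}, and Lemma \ref{lem:3.1} we have \dots,'' which is precisely the chain of estimates you assembled, followed by substituting the uniform bound $\|u\|_{H^2(\Omega,\R^M)}\leq C_1$. Nothing is missing, and your explicit choice of $C_2$ is a harmless concretization of the same bookkeeping.
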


\begin{theorem}\label{thm:3.1}
There is a strong solution to the system \eqref{eq:3.1} satisfying the constraints \eqref{eq:3.2}.
\end{theorem}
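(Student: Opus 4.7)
The plan is to reformulate \eqref{eq:3.1}--\eqref{eq:3.2} as an abstract coincidence problem
\[
\A u = \F(u),\quad u\in\K,
\]
in $X := L^2(\Omega,\R^M)$, and then invoke the constrained topological degree constructed in Subsection \ref{constrained degree}. Let $\A$ be the $L^2$-realization of $\mathcal P$ with Dirichlet data, i.e.\ the operator $\A u := \mathcal P[u]$ with domain $D(\A) = H^2(\Omega,\R^M) \cap H_0^1(\Omega,\R^M)$. By the G{\aa}rding inequality (Remark \ref{cons of additional}) a suitable shift $\A + \omega I$ is sectorial in $X$, and one may fix $\alpha \in (1/2,1)$ so that the fractional power space $X^\alpha$ embeds continuously into $H_0^1(\Omega,\R^M)$. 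Set
\[
\K := \{u \in X \mid u(x) \in K(x)\ \text{a.e.\ in}\ \Omega\}.
\]
By assumption \ref{constraint}, $\K$ is closed, convex and nonempty (it contains the $L^2$-selection $r(\cdot,0)$), while Assumption \ref{additional ass}(2) is precisely the invariance of $\K$ under the semigroup $e^{-t\A}$. The Nemytski map $\F(u)(x) := f(x, u(x), \part u(x))$ is well-defined, continuous and locally bounded from $\K^\alpha := \K \cap X^\alpha$ into $X$, by the growth condition $(f_2)$ (arguing as in Remark \ref{about sol}).

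The $X$-tangency of $\F$ to $\K$ at every $u\in\K^\alpha$ follows from the pointwise condition $(f_3)$. Since $u(x)\in K(x)$ a.e., we have $d(u(x) + h\F(u)(x), K(x)) \leq h|\F(u)(x)|$ with $|\F(u)|\in L^2(\Omega)$, so by dominated convergence combined with $(f_3)$
\[
\frac{1}{h}\,d_X(u + h\F(u), \K) = \Big(\int_\Omega \Big(\frac{d(u(x) + h\F(u)(x), K(x))}{h}\Big)^2\d x\Big)^{1/2}\ \xrightarrow[h\to 0^+]{}\ 0,
\]
and therefore $\F(u)\in T_\K(u)$ in $X$. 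Combined with the semigroup invariance of $\K$ noted above, this places the problem $\A u = \F(u)$, $u\in\K^\alpha$, within the framework of the constrained degree of Subsection \ref{constrained degree}.

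Existence is then produced by a standard homotopy. For $\lambda \in [0,1]$ put $\F_\lambda := \lambda\F$; each $\F_\lambda$ still satisfies $(f_2)$ with the same exponents $s,q$ and is tangent to $\K$ (as each convex cone $T_{K(x)}(u)$ contains $0$), and the constants in Lemma \ref{lem:3.1} are unaffected by the rescaling $\lambda\leq 1$. Thus the $H^2$-bound of Lemma \ref{lem:3.1} holds uniformly in $\lambda$, and via the continuous embedding $D(\A)\hookrightarrow X^\alpha$ provides a uniform $X^\alpha$-bound on every solution of $\A u = \lambda\F(u)$, $u\in\K^\alpha$. Choosing $R$ beyond this bound, no such solution meets $\part B_{X^\alpha}(0,R)\cap\K^\alpha$, so by the homotopy invariance of the constrained degree
\[
\deg_\K(\A - \F, B_{X^\alpha}(0,R)) = \deg_\K(\A, B_{X^\alpha}(0,R)),
\]
and the right-hand side is nonzero by the normalization property of the degree. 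This yields a solution $u \in \K^\alpha$ of $\A u = \F(u)$; since $\F(u)\in L^2(\Omega,\R^M)$, the $L^2$ elliptic regularity already used in Lemma \ref{lem:3.1} promotes $u$ to $H^2(\Omega,\R^M)\cap H_0^1(\Omega,\R^M)$, giving the desired strong solution.

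The principal difficulty is the nontriviality of the constrained degree of $\A$ on $\K^\alpha$: since $\K$ and $\K^\alpha$ have empty interior in $X$ and in $X^\alpha$ respectively, classical Leray--Schauder theory is unavailable, and the degree has to be constructed via an approximation scheme (cf.\ \eqref{defdeg}) that uses both the tangency of the approximating nonlinearities and the semigroup invariance of $\K$. Once this machinery of Subsection \ref{constrained degree} is accepted, the proof of Theorem \ref{thm:3.1} reduces to matching the hypotheses as outlined above.
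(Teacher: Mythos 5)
Your overall strategy coincides with the paper's: recast \eqref{eq:3.1}--\eqref{eq:3.2} as an abstract coincidence $\A u=\F(u)$, $u\in\K$, in $X=L^2(\Omega,\R^M)$, verify semigroup invariance of $\K$ and tangency of $\F$, use the a priori bounds of Lemma~\ref{lem:3.1} uniformly along the homotopy $\F_\lambda=\lambda\F$, and conclude via the constrained degree and its normalization (this is exactly Corollary~\ref{wniosek kon}). However, your choice of $\alpha$ leaves a genuine gap.

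You take $\alpha\in(1/2,1)$ merely so that $X^\alpha\hookrightarrow H^1_0(\Omega,\R^M)$, and then claim the Nemytski map $\F:\K^\alpha\to X$ is well-defined and continuous ``by the growth condition $(f_2)$ (arguing as in Remark~\ref{about sol}).'' This does not go through. Remark~\ref{about sol} only shows $\la f(\cdot,u,\partial u),v\ra\in L^1(\Omega)$ for $u,v\in H^1$, i.e.\ that $f(\cdot,u,\partial u)$ is a well-defined element of $H^{-1}$; it does \emph{not} give $f(\cdot,u,\partial u)\in L^2(\Omega,\R^M)$. For $\F$ to take values in $X$, one needs $u\in L^{2s}$ and $\partial u\in L^{2q}$; but the admissible ranges $s<\frac{N+4}{N}$ and $q<\frac{N+4}{N+2}$ permit $2s>2^*$ (for $N\geq 5$) and always permit $2q>2$, so $u\in H^1$ gives neither. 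The paper resolves this by choosing the exponent $p:=\max\{2q,(\tfrac1{2s}+\tfrac1N)^{-1}\}$ and picking $\alpha$ so that $X^\alpha\subset W^{1,p}(\Omega,\R^M)\subset L^{2s}\cap W^{1,2q}$, from which the estimate \eqref{szac1} follows. Without this sharper embedding, the map $\F$ is simply not defined on $\K^\alpha$ with values in $X$, and the degree framework of Subsection~\ref{constrained degree} cannot be started.

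A secondary, fixable, point: you apply the normalization property on $U=B_{X^\alpha}(0,R)$, chosen beyond the a priori bound. But normalization in Theorem~\ref{thm:1.1} requires $\K^\alpha\subset U$, and $\K^\alpha$ is bounded in $X$ (by $(K_3)$) but not, a priori, in the stronger $X^\alpha$-norm. The paper avoids this by working with an arbitrary open $U\supset\K^\alpha$ (the degree being independent of such $U$, Remark~\ref{ostatek}) and concluding $\deg_\K(\A,\F)=\chi(\K)=1$ directly via Corollary~\ref{wniosek kon}; you should replace your bounded ball by such a $U$.
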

\begin{proof} The proof relies on an abstract approach using the topological tools provided in Section \ref{appendix}.

\noindent {\bf Step\, I}: We introduce an abstract setting of the problem. Let $V:=H^1_0(\Omega,\R^M)$, $X:=L^2(\Omega,\R^M)$; both $V$ and $X$ are Hilbert spaces. Let
$\K\subset X$ be  the collection of {\em all} $L^2$-selections of $K(\cdot)$, i.e.,
\be\label{zbior}\K:=\{u\in X\mid u(x)\in K(x)\; \text{for a.a.}\; x\in\Omega\}.\ee
It has been already shown that $\K\neq\emptyset$ (comp. also \cite[Theorem 8.1.3]{Aubin}); $\K$ is closed and convex.\\
\indent Clearly $V$ is a dense subset of the  space $X$ and $V$ is continuously embedded into $X$, i.e., $\|v\|_X\leq c\|v\|_V$,  $v\in V$.  The bilinear form $\mathcal B:V\times V\to\R$ given by \eqref{form} is continuous and weakly coercive (see Remark \ref{cons of additional} (ii)).  Therefore, in view of the general construction in Subsection \ref{secHil} the {\em sectorial operator} $\A:D(\A)\to X$ such that $\la \A u,v\ra_X={\mathcal B}[u,v]$ for any $u\in D(\A)$ and $v\in V$ is well-defined.\\
\indent  Let us collect some properties of $\A$.

\indent (a) In view of the regularity results (see, e.g., \cite[Section 4.3]{Giaquinta}), the domain $D(\A) =H^2(\Omega,\R^M)\cap H_0^1(\Omega,\R^M)$  and $\A(u)={\mathcal P}[u]$ for $u\in D(\A)$, i.e., $\A$ is the $L^2$-realization of $\mathcal P$.

(b) $\A$ is closed, densely defined, the resolvent set $\rho(-\A)$ contains the set $\{\lambda\in\C\mid\Re\lambda>\omega\}$ and  $\|(\lambda I+A)^{-1}\|_{\mathcal L(X)}\leq(\lambda-\omega)^{-1}$ for $\lambda>\omega$, where $I$ is the identity on $X$. Given $h>0$, $h\omega<1$, the {\em resolvent} $J_h=J_h^{-\A}:=(I+h\A)^{-1}\in {\mathcal L}(X)$ is well-defined and
$J_h(X)\subset D(\A)$; see Lemma \ref{lem:0.2} for some additional properties of $J_h$.

(c)  $-\A$ generates a {\em analytic semigroup} $\{e^{-t\A}\}_{t\geq 0}$ of linear bounded operators on $X$ such that $\|e^{-t\A}\|_{\mathcal L(X)}\leq e^{\omega t}$ for $t\geq 0$. Observe that assumption \ref{additional ass} is equivalent to the semigroup invariance of $\K$, i.e.,
\be\label{si}e^{-t\A}(\K)\subset\K,\; t\geq 0,\ee
or the resolvent invariance of $\K$, i.e.,
\be\label{ri}J_h(\K)\subset \K,\; \text{where}\; h>0,\; h\omega<1.\ee
For the equivalence of the assumption \ref{additional ass} (2) and \eqref{si}, it is sufficient to  show that $u(t):=e^{-t\A}u_0$, $t\geq 0$, where $u_0\in\K$, is the unique weak solution to \eqref{parabolic}. The semigroup $\{e^{-t\A}\}$ is analytic, hence $u(t)\in D(\A)$, for $t>0$. Thus, $u\in C^1((0,+\infty),X)$ and $u'(t)=\A u(t)$ for  $t>0$. If $v\in V$, then by \cite[Cor. III.1.1]{Show}, for every $t>0$,
$$[u'(t),v]=\frac{d}{dt}\la u(t),v\ra_X=\la \A u(t),v\ra_X={\mathcal B}[u(t),v].$$
For the equivalence of \eqref{si} and \eqref{ri} see the proof Propostion \ref{invariance 1}; compare Section \ref{INV} for some other invariance issues.

(d) The compactness of the embedding $H_0^1(\Omega,\R^M)\hookrightarrow L^2(\Omega,\R^M)$ implies that  the semigroup $\{e^{-t\A}\}_{t\geq 0}$ is {\em compact} and so is the resolvent $J_h^{-\A}$, where $h>0$, $h\omega<1$ (\footnote{Let us  add that if $\Omega=\R^N$, then the above construction is valid, too, i.e., $\mathcal P$ determines the sectorial operator $\A$ (see \cite{Kunst}), but the  semigroup $\{e^{-t\A}\}$ is not compact in general.}).

\indent Now recall assumption \ref{nonlinear} and set
\be\label{def p}p := \max\left\{2q,\le \frac{1}{2s} + \frac{1}{N}\pr^{-1}\right\}.\ee
Then assumptions $s<\frac{N+4}{N}$ and $q<\frac{N+4}{N+2}$ imply that $2\leq p< \min\{2^\ast, N\}$. The $L^2$-realization $\A:D(\A)\to L^2(\Omega,\R^M)$ of $\mathcal P$ is sectorial, so, by inspection of the proof of \cite[Theorem 1.6.1]{Henry} there is $\alpha\in (0,1)$ such that
$$X^\alpha\subset W^{1,p}(\Omega,\R^M),$$
where $X^\alpha$ is the fractional space associated with $\A$  (see Section 4.1). The definition of $p$ and the Sobolev embeddings yield
\begin{equation}\label{eq:3.8}
X^\alpha \subset W^{1,p}(\Omega,\R^M)\subset L^{2s}(\Omega,\R^M) \cap W^{1,2q}(\Omega,\R^M).
\end{equation}
Recall \eqref{zbior} and define
\[
\K^\alpha:= \K\cap X^\alpha.
\]
By \eqref{ri}, $J_h(\K)\subset\K\cap D(\A)$ for sufficiently small $h>0$, so, in particular, $\K^\alpha\neq\emptyset$.  Let
$\F:\K^\alpha\to X$ be the superposition operator determined by $f$, i.e., for a.e. $x\in \Omega$ and  $u\in X^\alpha$
\[
\F(u)(x) = f(x,u(x), \part u(x)).
\]
The growth assumption $(f_2)$ and  \eqref{eq:3.8} show that, for $u \in \K^\alpha$,
\begin{align}
\label{szac1}\|\F(u)\|_{L^2(\Omega,\R^M)}&\leq c\left(\|\beta\|_{L^2(\Omega)} + \le \int_\Omega|u|^{2s}\d x \pr^{1/2} + \le \int_\Omega |\part u|^{2q} \d x \pr ^{1/2}\right)\\
&\nonumber\leq c(\|\beta\|_{L^2(\Omega)} + \|u\|_{L^{2s}(\Omega,\R^M)}^s +\|u\|_{W^{1,2q}(\Omega,\R^M)}^q)\leq
c(1 + \|u\|_{\alpha}^{\max\{s,q\}}).
\end{align}
This proves that $\F$ is well-defined.
The standard argument using the Lebesgue dominated convergence theorem shows that $\F$ is continuous. The {\em tangency} condition \eqref{tangency}  implies that
\be\label{tan} \F(u) \in T_\K(u)\;\; \text{for all}\;\; u\in \K^\alpha,\ee
where $T_\K(u)$ is the tangent cone to $\K$ at $u$ (see Section \ref{sto} and Example \ref{au}).
\begin{remark}\label{sols} {\em Clearly a function $u$ is a strong solution to \eqref{eq:3.1} satisfying \eqref{eq:3.2} if and only if $u\in \K\cap D(\A)$ and $\A u=\F(u)$.
}\end{remark}

\noindent {\bf Step\, II}: Taking into account issues collected above we are in a position to apply the constrained degree theory developed in Section \ref{constrained degree}. Namely:
\begin{enumerate}
\item[(a)] a sectorial operator $\A:D(\A)\to X$ with compact resolvent, $\alpha\in [0,1)$ and the fractional space $(X^\alpha,\|~\cdot\|_\alpha)$ associated with $\A$ are given;
\item[(b)] $\K\subset X$ is closed convex bounded and  $J_h(\K)\subset \K$ for all $h>0$ with $h\omega<1$;
\item[(c)] $\F:\K^\alpha\to X$ is continuous and tangent to $\K$, i.e., condition \eqref{tan} holds.
\end{enumerate}
\indent Let $C:=\{u\in\K^\alpha\cap D(A)\mid\A u=t\F(u),\; t\in [0,1]\}$. By  Remark \ref{sols}, \eqref{eq:3.3} and Corollary \ref{sols 0}, the sets $C$ and $\F(C)$ are bounded in $X$. $C$  is also closed in $\K^\alpha$ (see Remark \ref{uwagi} (2)). In view of Corollary \ref{wniosek kon} we have the conclusion of Theorem \ref{thm:3.1}. \end{proof}

\section{The problem on $\R^N$}

In this section we study the problem \eqref{main problem}, \eqref{cons}, where $\Omega=\R^N$. We will apply the approximation-truncation approach sketched in Introduction. For that reason we enhance assumption \ref{additional ass}.

\begin{assumption}\label{add ass}{\em \mbox{}

\begin{enumerate} \item The coefficients $A^{ij}\in \R^{M\times M}$, $1\leq i,j\leq N$, of $\mathcal P$ are constant and $\mathcal P$ is {\em strongly elliptic} in the sense of the {\em Legendre condition}, i.e., there is $\theta>0$ such that for any $\xi\in\R^{M\times N}$
    $$\sum_{i,j=1}^N\sum_{k,l=1}^MA^{ij}_{kl}\xi_{lj}\xi_{ki}\geq\theta|\xi|^2;$$
\item there is a sequence $(R_n)_{n=1}^\infty$ with  $R_n\nearrow\infty$ such that, for any $n\geq 1$, the graph $\Gr\left(K|_{B_n}\right)$ is invariant with respect to \eqref{parabolic} (with $\Omega$ replaced with $B_n$); here  $B_n:=\{x\in\R^N\mid|x|<R_n\}$ is the ball around 0 of radius $R_n$; in what follows, we assume that $R_n=n$, $n\geq 1$, for short.\end{enumerate}
}
\end{assumption}

\indent We start with some auxiliary lemmata.
\begin{lemma}\label{lem:4.1} {\em (i)} For every $1\leq p<N$, there is a constant $c_0=c_0(p)$ depending on $p$ only, such that for any $R\geq 1$ and $v\in W^{1,p}(B_R,\R^M)$
\be\label{sobolev constant}\|v\|_{L^{p\ast}(B_R,\R^M)}\leq c_0(p)\|v\|_{W^{1,p}(B_R,\R^M)}.\ee
\indent {\em (ii)} There is a constant $c_1>0$ such that for any $R\geq 1$ and $v\in H^2(B_R,\R^M)$,
\be\label{EB}|v|^2_{1,2,B_R}\leq c^2_1(\|v\|_{L^2(B_R,\R^M)}|v|_{2,2,B_R}+\|v\|_{L^2(B_R,\R^M)}^2).\ee
\end{lemma}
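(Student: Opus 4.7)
The central difficulty in both assertions is obtaining a constant that does \emph{not} depend on $R$; the standard Sobolev embedding and Ehrling--Browder inequalities certainly hold on each individual ball $B_R$, but their constants a priori deteriorate with $R$. The plan is to reduce everything to the fixed ball $B_1$ and track how the various norms scale. Since we only need the estimates for $R\geq 1$, the ``bad'' powers of $R$ that appear will always carry a negative (or zero) exponent, hence will be bounded by $1$.

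For (i) I would argue as follows. Given $v\in W^{1,p}(B_R,\R^M)$ with $R\geq 1$, define the rescaled function $w(y):=v(Ry)$ for $y\in B_1$. A direct change of variables gives
\[
\|w\|_{L^{p^*}(B_1,\R^M)}=R^{-N/p^*}\|v\|_{L^{p^*}(B_R,\R^M)},\quad
\|w\|_{L^p(B_1,\R^M)}=R^{-N/p}\|v\|_{L^p(B_R,\R^M)},
\]
and $\|\nabla w\|_{L^p(B_1,\R^{M\times N})}=R^{1-N/p}\|\nabla v\|_{L^p(B_R,\R^{M\times N})}$. Applying the Sobolev embedding $W^{1,p}(B_1,\R^M)\hookrightarrow L^{p^*}(B_1,\R^M)$ with its (fixed) constant $c_0(p)$, then multiplying back by $R^{N/p^*}$ and using the identity $N/p^*-N/p+1=0$, one arrives at
\[
\|v\|_{L^{p^*}(B_R,\R^M)}\leq c_0(p)\bigl(R^{-1}\|v\|_{L^p(B_R,\R^M)}+\|\nabla v\|_{L^p(B_R,\R^M)}\bigr).
\]
Since $R\geq 1$ the factor $R^{-1}\leq 1$, and the desired inequality follows after replacing $c_0(p)$ by a suitable multiple.

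For (ii) the same rescaling $w(y)=v(Ry)$ transforms the seminorms as $|w|_{1,2,B_1}^{2}=R^{2-N}|v|_{1,2,B_R}^{2}$ and $|w|_{2,2,B_1}^{2}=R^{4-N}|v|_{2,2,B_R}^{2}$. The standard Ehrling--Browder interpolation on the fixed ball $B_1$ (see \cite[Thm 4.17]{Adams}), namely
\[
|w|_{1,2,B_1}^{2}\leq c_1^{2}\bigl(\|w\|_{L^2(B_1,\R^M)}\,|w|_{2,2,B_1}+\|w\|_{L^2(B_1,\R^M)}^{2}\bigr),
\]
yields, after substituting the scaling relations and multiplying through by $R^{N-2}$,
\[
|v|_{1,2,B_R}^{2}\leq c_1^{2}\bigl(\|v\|_{L^2(B_R,\R^M)}\,|v|_{2,2,B_R}+R^{-2}\|v\|_{L^2(B_R,\R^M)}^{2}\bigr).
\]
Using $R\geq 1$ one more time to bound $R^{-2}\leq 1$ gives precisely \eqref{EB}. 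The only place where any ``insight'' is needed is the observation that the scaling exponents line up so that the residual powers of $R$ are all non-positive; aside from that the proof is a bookkeeping exercise and there is no genuine obstacle.
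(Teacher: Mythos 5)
Your proof is correct and takes essentially the same route as the paper: rescale to the fixed ball $B_1$, apply the Sobolev (resp.\ Ehrling--Browder) inequality there, scale back, and observe that the residual powers of $R$ are non-positive once $R\geq 1$. The only cosmetic difference is that in part (i) the paper passes through the scalar function $|v|$ (via \cite[Cor.\ 4.6]{ArendtKreuter}) to reduce to the scalar Sobolev inequality, whereas you apply the vector-valued embedding directly; both are fine.
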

\begin{proof} These seem to be folklore results --- for the sake of completeness we give the proofs.\\ \indent (i) By the Sobolev inequality there is $c_0=c_0(p)$ such that
$$\|u\|_{L^{p^\ast}(B_1)}\leq c_0\|u\|_{W^{1,p}(B_1)},$$
for any $u\in W^{1,p}(B_1)$. Take $R\geq 1$, $v\in W^{1,p}(B_R)$ and let $u(x):=v(Rx)$ for $x\in B_1$.  Clearly, $u\in W^{1,p}(B_1)$ and, after the change of variables, we get
$$\|u\|_{L^{p^\ast}(B_1)}=R^{-N/p^{\ast}}\|v\|_{L^{p^\ast}(B_R)},\;\|u\|^p_{L^p(B_1)}=
R^{-N}\|v\|_{L^p(B_R)}^p\;\; \text{and}\;\;\|\part_ju\|_{L^p(B_1)}=R^{p-N}\|\part_jv\|^p_{L^p(B_R)}, $$
for any $j=1,\ldots,N$. Hence and taking into account that $Np/p^\ast=N-p$, we obtain
\begin{align}\label{przyd}\|v\|_{L^{p^\ast}(B_R)}&\leq c_0\left(R^{Np/p^\ast-N}\|v\|^p_{L^p(B_R)}+R^{Np/p^\ast+p-N}
\sum_{j=1}^N\|\part_jv\|^p_{L^p(B_R)}\right)^{1/p}\\
\nonumber &\leq c_0\|v\|_{W^{1,p(B_R)}}.\end{align}
If $v\in W^{1,p}(B_R,\R^M)$, then $w:=|v| \in W^{1,p}(B_R)$ and $\|w\|_{W^{1,p}(B_R)}\leq \|v\|_{W^{1,p}(B_R,\R^M)}$, in view of \cite[Cor 4.6]{ArendtKreuter}. Hence and by the above
\[
\|v\|_{L^{p^\ast}(B_R,\R^M)}= \|w\|_{L^{p^\ast}(B_R)} \leq c_0 \|w\|_{W^{1,p}(B_R)}\leq c_0 \|v\|_{W^{1,p}(B_R,\R^M)}.
\]

\indent (ii)
By the Ehrling-Browder inequalities (see \cite[Corollary 4.16, Theorem 4.17]{Adams}), for  $u\in H^2(B_1,\R^M)$
\[
\|u\|^2_{H^2(B_1,\R^M)}\leq c(|u|_{2,2,B_1}^2 + \|u\|_{L^2(B_1,\R^M)}^2)\;\;\text{and}\;\;
|u|^2_{1,2}\leq c\|u\|_{H^2(B_1,\R^M)}\|u\|_{L^2(B_1,\R^M)}.
\]
Combining the above inequalities, there is $c_1>0$ such that for any
$u\in H^2(B_1,\R^M)$
\begin{equation}\label{eq:4.2}
|u|^2_{1,2,B_1}\leq c^2_1 (|u|_{2,2,B_1}\|u\|_{L^2(B_1,\R^M)}+\|u\|^2_{L^2(B_1,\R^M)}).
\end{equation}
Fix $R\geq 1$ and $v\in H^2(B_R,\R^M)$. Again we define $u(x):=v(Rx)$ for $x\in B_1$.
Then $u\in H^2(B_1,R^M)$ and, after the change of variables,
$$|u|^2_{1,2,B_1}=R^{2-N}|v|^2_{1,2,B_R},\;|u|_{2,2,B_1}
=R^{2-N/2}|v|_{2,2,B_R},\;\|u\|_{L^2(B_1,\R^M)}=R^{-N/2}\|v\|_{L^2(B_R,\R^M)}.$$
Therefore and by \eqref{eq:4.2}
\[
|v|^2_{1,2,B_R} \leq c^2_1(|v|_{2,2,B_R}\|v\|_{L^2(B_R,\R^M)} + R^{-2}\|v\|^2_{L^2(B_R,\R^M)}.\qedhere
\]
\end{proof}
\begin{lemma}\label{lem:4.3}
There is $c_2 >0$ such that for all $R\geq 1$ and $g\in L^2(B_R,\R^M)$ if $v\in H^1_0(B_R,\R^M)$ is a weak solution of ${\mathcal P}[v]=g$, then $v\in H^2(B_R,\R^M)$ and
\be\label{EBd}
\|v\|_{H^2(B_R,\R^M)}\leq c_2(\|g\|_{L^2(B_R,\R^M)} + \|v \|_{L^2(B_R,\R^M)}).
\ee
\end{lemma}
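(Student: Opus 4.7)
The plan is to obtain an $R$-uniform $H^2$ regularity estimate by scaling the unit-ball estimate for the constant-coefficient principal part and then recovering full-norm control via the interpolation inequality from Lemma \ref{lem:4.1}(ii). Throughout we use crucially that, by Assumption \ref{add ass}(1), the top-order coefficients $A^{ij}$ are \emph{constant} and $\mathcal P$ is strongly elliptic in the Legendre sense, while $B^i, C\in L^\infty(\R^N,\R^{M\times M})$ with norms that do not depend on $R$.

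First, I would apply the classical $H^2$-regularity theory for strongly elliptic systems on the smooth bounded domain $B_R$ (e.g.\ \cite[Theorem 4.14]{Giaquinta}, cf.\ \cite[Thm 8.12]{Gilbarg}) to deduce that the weak solution $v$ belongs to $H^2(B_R,\R^M)$; only the \emph{uniformity in $R$} of the estimate is non-trivial. Next, split off the lower-order terms: let $\mathcal P_0[v]:=-\sum_{i,j=1}^N A^{ij}\part_i\part_j v$, so that
\[
\mathcal P_0[v]=g-\sum_{i=1}^N B^i\part_i v-Cv=:\tilde g,\qquad
\|\tilde g\|_{L^2(B_R,\R^M)}\leq \|g\|_{L^2(B_R,\R^M)}+c_\ast\bigl(|v|_{1,2,B_R}+\|v\|_{L^2(B_R,\R^M)}\bigr),
\]
where $c_\ast$ depends only on $\|B^i\|_{L^\infty(\R^N)}$ and $\|C\|_{L^\infty(\R^N)}$.

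The heart of the argument is an $R$-uniform bound for $|v|_{2,2,B_R}$ in terms of $\|\mathcal P_0[v]\|_{L^2(B_R,\R^M)}$. I would proceed as in Lemma \ref{lem:4.1}: set $u(x):=v(Rx)$ for $x\in B_1$, so that $u\in H^1_0(B_1,\R^M)\cap H^2(B_1,\R^M)$ and, because the $A^{ij}$ are constant, $\mathcal P_0[u](x)=R^2\mathcal P_0[v](Rx)$. The standard $H^2$-estimate on the fixed smooth domain $B_1$ for the constant-coefficient strongly elliptic operator $\mathcal P_0$ yields a constant $C_0=C_0(\theta,A^{ij})$ with
\[
\|u\|_{H^2(B_1,\R^M)}\leq C_0\bigl(\|\mathcal P_0[u]\|_{L^2(B_1,\R^M)}+\|u\|_{L^2(B_1,\R^M)}\bigr).
\]
Change of variables gives $\|u\|_{L^2(B_1,\R^M)}=R^{-N/2}\|v\|_{L^2(B_R,\R^M)}$, $\|\mathcal P_0[u]\|_{L^2(B_1,\R^M)}=R^{2-N/2}\|\mathcal P_0[v]\|_{L^2(B_R,\R^M)}$ and $|u|_{2,2,B_1}=R^{2-N/2}|v|_{2,2,B_R}$. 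Comparing the $|\,\cdot\,|_{2,2}$ terms (and using $R\geq 1$ to bound the factors $R^{-4}$ that appear below by $1$) produces
\[
|v|_{2,2,B_R}\leq C_0\bigl(\|\mathcal P_0[v]\|_{L^2(B_R,\R^M)}+\|v\|_{L^2(B_R,\R^M)}\bigr),
\]
with $C_0$ independent of $R$.

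Combining the displayed bounds,
\[
|v|_{2,2,B_R}\leq C_0\|g\|_{L^2(B_R,\R^M)}+C_0 c_\ast|v|_{1,2,B_R}+C_0(c_\ast+1)\|v\|_{L^2(B_R,\R^M)}.
\]
To close the estimate I would invoke Lemma \ref{lem:4.1}(ii) together with Young's inequality $ab\leq \eps a^2+\tfrac{1}{4\eps}b^2$ to write, for arbitrary $\eps>0$,
\[
|v|_{1,2,B_R}\leq c_1\sqrt{\eps}\,|v|_{2,2,B_R}+c_1\bigl(\tfrac{1}{4\eps}+1\bigr)^{1/2}\|v\|_{L^2(B_R,\R^M)}.
\]
Choosing $\eps$ so small that $C_0 c_\ast c_1\sqrt{\eps}<\tfrac12$ lets me absorb the $|v|_{2,2,B_R}$-term on the right-hand side, yielding an $R$-uniform bound $|v|_{2,2,B_R}\leq c(\|g\|_{L^2(B_R,\R^M)}+\|v\|_{L^2(B_R,\R^M)})$. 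Feeding this back into Lemma \ref{lem:4.1}(ii) gives the same bound for $|v|_{1,2,B_R}$, and summing the two together with the trivial bound on $\|v\|_{L^2}$ yields \eqref{EBd}.

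The main obstacle, and the only place where care is needed, is ensuring the constant is genuinely independent of $R\geq 1$. This is handled by (a) the constancy of the principal coefficients, which makes the scaling $x\mapsto Rx$ preserve the principal operator, and (b) exploiting $R\geq 1$ to absorb the unfavourable powers of $R^{-1}$ appearing in the lower-order norms of $u$ after scaling. Once the $R$-independence of the constant in the estimate for $|v|_{2,2,B_R}$ is secured, the remainder is a standard interpolation/absorption argument.
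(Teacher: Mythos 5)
Your proposal is correct and follows essentially the same route as the paper: scale to the unit ball, invoke the constant-coefficient $H^2$-estimate there (valid uniformly because the $A^{ij}$ are constant and the domain is fixed), scale back using that $R\geq 1$ kills the unfavourable powers of $R$, and absorb the $|v|_{1,2,B_R}$ term via Lemma \ref{lem:4.1}(ii) plus Young's inequality. The only cosmetic difference is the order of operations: you split $\mathcal P=\mathcal P_0+(\text{lower order})$ in the original variables and scale only the principal part, whereas the paper scales first and then separates the lower-order part $T[u]$ in the scaled variables — the two computations are equivalent.
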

\begin{proof} Take $R\geq 1$, $g\in L^2(B_R,\R^M)$, $v\in H^2(B_R,\R^M)$ and let ${\mathcal B}[v,\vp]=\int_{B_R}\la g,\vp\ra\d x$ for any test function $\vp\in C^\infty_0(B_R,\R^M)$. Define $f(x):=g(Rx)$, $u(x):=v(Rx)$ for $x\in B_1$ and take a test function $\psi\in C^\infty_0(B_1,\R^M)$. Let $\vp(x):=\psi(R^{-1}x)$, for $x\in B_R$. We see that $f\in L^2(B_1,\R^M)$, $u\in H^1_0(B_1,\R^M)$ and  $\vp\in C^\infty_0(B_R,\R^M)$. After the change of variables
\begin{gather*}\int_{B_R}\la g,\vp\ra\d x=R^N\int_{B_1}\la f,\psi\ra\d x,\\
{\mathcal B}[v,\vp]=R^{N-2}\int_{B_1}\sum_{i,j=1}^N\la A^{ij}\part_ju,\part_i\psi\ra\d x+
R^{N-1}\int_{B_1}\sum_{i=1}^N\la B^i_R\part_iu,\psi\ra\d x+R^N\int\la C_Ru,\psi\ra\d x,
\end{gather*}
where $B^i_R(x):=B^i(Rx)$, $C_R(x):=C(Rx)$ for $x\in B_1$ ($i=1,\ldots,N$). This shows that $u$ weakly solves the problem
$${\mathcal P}_0[u]=R^2f+T[u],\; \text{where}\; {\mathcal P}_0[u]:=-\sum_{i,j=1}^N\part_i(A^{ij}\part_ju)\;\; \text{and}\;\; T[u]:
=R\sum_{i=1}^NB^i_R\part_iu+R^2C_Ru.$$
It is clear that $T[u]\in L^2(B_1,\R^M)$ and
\be\label{przydat}\|T[u]\|_{L^2(B_1,\R^M)}\leq RN\max_{i=1,\ldots,N}\|B^i\|_{L^\infty}|u|_{1,2,B_1}+
R^2\|C\|_{L^\infty}\|u\|_{L^2(B_1,\R^M)}.\ee
\indent The regularity result (see, e.g.,
 \cite[Thm 4.14]{Giaquinta}) implies that there is a constant $c_3>0$ such that given $h\in L^2(B_1,\R^M)$ if $w\in H^1_0(B_1,\R^M)$ is a
weak solution to ${\mathcal P}_0[w]=h$, then
$$|w|_{2,2,B_1}\leq c_3\|h\|_{L^2(B_1,\R^M)}.$$
This implies that $u\in H^2(B_1,\R^M)$, in consequence $v\in H^2(B_R,\R^M)$,  and
$$|u|_{2,2,B_1}\leq c_3(R^2\|f\|_{L^2(B_1,\R^M)}+\|T[u]\|_{L^2(B_1,\R^M)}).$$
Hence and by \eqref{przydat}, there is a constant $c_4$ (depending on $\mathcal P$ only) such that
\be\label{EB1}|u|_{2,2,B_1}\leq c_4(R^2\|f\|_{L^2(B_1,\R^M)}+R^2\|u\|_{L^2(B_1,\R^M)}+R|u|_{1,2,B_1}).\ee
As before, after the change of variables we find
\begin{gather*}|u|_{2,2,B_1}=R^{2-N/2}|v|_{2,2,B_R},\;\|f\|_{L^2(B_1,\R^M)}=
R^{-N/2}\|g\|_{L^2(B_R,\R^M)},\\
\|u\|_{L^2(B_1,\R^M)}=R^{-N/2}\|v\|_{L^2(B_R,\R^M)}\; \text{and}\; |u|_{1,2,B_1}=R^{1-N/2}|v|_{1,2,B_R},
\end{gather*}
so \eqref{EB1} becomes
\be\label{EB2}|v|_{2,2,B_R}\leq c_4(\|g\|_{L^2(B_R,\R^M)}+|v|_{1,2,B_R}+\|v\|_{L^2(B_R,\R^M)}).\ee
We use Lemma \ref{lem:4.1} (ii) and the inequality $ab\leq \eps a^2+b^2/\eps$, for $a,b\geq 0$, to deduce
$$|v|_{1,2,B_R}\leq c_1(\eps|v|_{2,2,B_R}+(1+\eps^{-1})\|v\|_{L^2(B_R,\R^M)}).$$
Taking $\eps$ so that $c_1c_4\eps=1/2$ and returning to \eqref{EB2}, we conclude
\be\label{EB3}|v|_{2,2,B_R}\leq c(\|g\|_{L^2(B_R,\R^M)}+\|v\|_{L^2(B_R,\R^M)})\ee
for some $c$ independent of the choice of $R$. Now, combining \eqref{EB3} with \eqref{EB}, we get the assertion \eqref{EBd}.
\end{proof}
\begin{theorem}\label{main th}
Problem \eqref{main problem}, \eqref{cons}, where $\Omega=\R^N$, has a strong solution.
\end{theorem}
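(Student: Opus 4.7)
The plan is to produce a solution on $\R^N$ as the $H^1$-limit of solutions to the truncated Dirichlet problems posed on the balls $B_n$. Assumption \ref{add ass}(2) asserts precisely that the hypotheses of Theorem \ref{thm:3.1} hold for the problem restricted to each $B_n$, so for every $n\geq 1$ there is a strong solution $u_n\in H^1_0(B_n,\R^M)\cap H^2(B_n,\R^M)$ of $\mathcal{P}[u_n]=f(\cdot,u_n,\part u_n)$ with $u_n(x)\in K(x)$ a.e.\ in $B_n$. Extending each $u_n$ by zero outside $B_n$, I regard $(u_n)$ as a sequence in $H^1(\R^N,\R^M)$.

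The next step is to show that the bounds of Section~\ref{bdd} are uniform in $n$. From $(K_3)$ one has $\|u_n\|_{L^2(\R^N,\R^M)}\leq\|m\|_{L^2(\R^N)}$, and the interpolation and regularity constants supplied by Lemmas~\ref{lem:4.1} and~\ref{lem:4.3} are explicitly scale-invariant, i.e.\ independent of $R=n$. Consequently, repeating the bootstrap of Lemma~\ref{lem:3.1} in this setting and invoking Corollary~\ref{sols 0} yields a constant $C>0$, independent of $n$, with
$$\|u_n\|_{H^2(B_n,\R^M)}+\|f(\cdot,u_n,\part u_n)\|_{L^2(B_n,\R^M)}\leq C.$$
A standard diagonal extraction over the exhaustion $\{B_k\}$ then delivers a subsequence (not relabelled) and $u\in H^1(\R^N,\R^M)$ with $u_n\rightharpoonup u$ weakly in $H^1(\R^N,\R^M)$ and in $H^2_{loc}(\R^N,\R^M)$, strongly in $L^2_{loc}(\R^N,\R^M)$, and pointwise a.e.; the closedness of each $K(x)$ forces $u(x)\in K(x)$ a.e.

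The main obstacle is that on $\R^N$ weak $H^1$-convergence is not sufficient to pass to the limit in $f(\cdot,u_n,\part u_n)$, because mass might leak to infinity. I overcome this via \emph{tail estimates}: the objective is
$$\lim_{R\to\infty}\sup_n\int_{\R^N\setminus B_R}\bigl(|u_n|^2+|\part u_n|^2\bigr)\,\d x=0.$$
Fix a smooth cutoff $\chi_R$ with $\chi_R\equiv 0$ on $B_R$, $\chi_R\equiv 1$ off $B_{2R}$, and $|\nabla\chi_R|\leq c/R$, and test the equation against $\chi_R^2 u_n$. Using the Legendre condition in Assumption~\ref{add ass}(1) to absorb the quadratic gradient term, one obtains
$$\theta\int\chi_R^2|\part u_n|^2\,\d x\leq \Bigl|\int\chi_R^2\la f(\cdot,u_n,\part u_n),u_n\ra\,\d x\Bigr|+E_R,$$
where $E_R$ collects the cross-terms $\int A^{ij}\chi_R(\part_i\chi_R)\part_ju_n\cdot u_n$ and the lower-order contributions from $B^i$ and $C$. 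The cross-terms are $O(R^{-1})$ by the uniform $H^1$-bound, while the $f$-term is estimated via $(f_2)$ by $\|\beta\|_{L^2(\R^N\setminus B_R)}\|u_n\|_{L^2}$ and by integrals of $|u_n|^{s}|u_n|$ and $|\part u_n|^q|u_n|$ over $\R^N\setminus B_R$; the same interpolation arguments as in Lemma~\ref{lem:3.1}, combined with the uniform $H^2$-bound and the pointwise bound $|u_n|\leq m$, dominate the latter by a constant times $\|m\|_{L^2(\R^N\setminus B_R)}$, which tends to $0$ because $m\in L^2(\R^N)$.

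With the tails under control, Rellich's theorem on each fixed ball combined with weak $H^2_{loc}$-convergence gives strong convergence $u_n\to u$ in $H^1_{loc}(\R^N,\R^M)$; together with the tail estimate, this upgrades to strong convergence in $H^1(\R^N,\R^M)$. Interpolation with the critical Sobolev exponents, using the scale-uniform constant of Lemma~\ref{lem:4.1}(i), then yields $u_n\to u$ in $L^{2s}(\R^N,\R^M)$ and $\part u_n\to\part u$ in $L^{2q}(\R^N,\R^{M\times N})$. By $(f_1)$ and $(f_2)$ this gives $f(\cdot,u_n,\part u_n)\to f(\cdot,u,\part u)$ in $L^2(\R^N,\R^M)$; passing to the limit in the weak formulation against any $\vp\in C^\infty_0(\R^N,\R^M)$ shows that $u$ weakly solves \eqref{main problem} on $\R^N$. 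Applying Lemma~\ref{lem:4.3} on an arbitrary ball $B_R$ promotes $u$ to a strong solution lying in $H^1(\R^N,\R^M)\cap H^2_{loc}(\R^N,\R^M)$, completing the proof.
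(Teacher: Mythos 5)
Your proposal follows essentially the same route as the paper: solve the truncated Dirichlet problems on $B_n$ via Theorem~\ref{thm:3.1}, obtain $n$-independent $H^2(B_n)$ bounds from the scale-invariant Lemmas~\ref{lem:4.1} and~\ref{lem:4.3}, derive tail estimates by testing the equation with a cutoff multiple of $u_n$ and using the Legendre condition, deduce relative $H^1(\R^N)$-compactness from Rellich on bounded balls plus the uniform smallness of tails, and upgrade the limiting weak solution to a strong one by local elliptic regularity. The only cosmetic differences are the use of $\chi_R^2 u_n$ instead of $\f_R u_n$ as the test function and the (slightly stronger, but still valid via Vitali/interpolation) assertion of $L^2$-convergence of $f(\cdot,u_n,\part u_n)$ where the paper passes to the limit by dominated convergence against fixed test functions.
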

\begin{proof}
We consider the family of truncated problems
\be\label{on RN}
\begin{cases}
{\mathcal P}[u]=f(x,u,\part u)&\text{on }B_n,\; u|_{\part B_n}=0,\\
u(x)\in K(x) &\text{for a.e. } x\in B_n,
\end{cases}\ee
where $n\in\N$.  By Theorem \ref{thm:3.1}, for every $n\in\N$, there is a solution $u_n\in H^2(B_n,\R^M)\cap H^1_0(B_n,\R^M)$ such that $u_n(x)\in K(x)$ a.e. on $B_n$ since assumption \ref{add ass} entails assumption \ref{additional ass} with $\Omega$ replaced by $B_n$.

\noindent {\bf Step\, I}: We claim that the sequence $(\|u_n\|_{H^2(B_n,\R^M)})_{n=1}^\infty$ is bounded. Indeed, in view of $(K_3)$,
\be\label{L2 estimate}|u_n(x)|\leq m(x)\;\; \text{a.e. and thus}\;\; \|u_n\|_{L^2(B_n,\R^M)}\leq M_0:= \|m\|_{L^2(\R^N)}.\ee
Now we are going to establish the uniform $H^2$-estimate. By Lemma \ref{lem:4.3} and \eqref{L2 estimate}, there is $c_2$ such that for all $n\geq 1$
\be\label{H2 1}
\|u_n\|_{H^2(B_n,\R^M)}\leq c_2(\|f(\cdot,u_n,\part u_n)\|_{L^2(B_n,\R^M)}+M_0).
\ee
As in \eqref{szac a}, condition $(f_2)$ yields, for  $n\geq 1$,
\be\label{H2 2}
\|f(\cdot,u_n,\part u_n)\|_{L^2(B_n,\R^M)}\leq c(\|\beta\|_{L^2}+I^{(n)}_1+I^{(n)}_2),
\ee
where
$$
I^{(n)}_1=\left(\int_{B_n}|u_n|^{2s}\d x\right)^{1/2}=\|u_n\|_{L^{2s}(B_n,\R^M)}^s,\;\; I^{(n)}_2=\left(\int_{B_n}|\part u_n|^{2q}\d x\right)^{1/2}=\||\part u_n|\|_{L^{2q}(B_n,\R^M)}^q.
$$
We now proceed similarly as in the proof of Lemma \ref{lem:3.1} to estimate $I_1^{(n)}$ and $I_2^{(n)}$ but, to get constants independent of $n\geq 1$, we apply Lemmata \ref{lem:4.1} and \ref{lem:4.3}. Obvious modifications of arguments used to get \eqref{szac d}, \eqref{szac e} and \eqref{szac b} show that
$$I_1^{(n)}\leq c(1+\|u_n\|_{H^2(B_n,\R^M)}^{\gamma_1})\;\; \text{and}\;\;
I_2^{(n)}\leq c(1+\|u_n\|^{\gamma_2}_{H^2(B_n,\R^M)}),$$
for some constants $c>0$, $\gamma_1,\gamma_2\in (0,1)$ independent  of $n\geq 1$.
Combine this with \eqref{H2 2}, to find
\be\label{e 7a}
\|f(\cdot,u_n,\part u_n)\|_{L^2(B_n,\R^M)}\leq c(1+\|u_n\|^\gamma_{H^2(B_n,\R^M)}),
\ee
where constants $c>0$ and $\gamma\in(0,1)$ do not depend on $n\geq 1$. Hence, and by \eqref{H2 1}, there is $M_1>0$ such that
\be\label{e 8}\sup_{n\geq 1}\|u_n\|_{H^2(B_n,\R^M)}\leq M_1.\ee

\noindent {\bf Step\, II}:  From now on let us think of each $u_n$ as being extended to zero outside $B_n$. Since $u_n\in H^1_0(B_n,\R^M)$, we may assume that $u_n \in H^1(\R^N,\R^M)$ and $\|u_n\|_{H^1(B_n,\R^M)}=\|u_n\|_{H^1(\R^N,\R^M)}$  (\footnote{Note that in general $u_n\notin H^2(\R^N,\R^M)$ and this is the reason for some technical difficulties in what follows.}). We will show that the set $\{u_n\}_{n\geq 1}$ is relatively compact in $H^1(\R^N,\R^M)$.\\
\indent The idea is to decompose $\{u_n\} \subset \{ \chi_{B_R}u_n\} + \{(1-\chi_{B_R})u_n\}$, where $R>0$ is large enough and $\chi_{B_R}$ stands for the indicator function of $B_R$,
and show that the first set, being bounded in $H^2$, is compact in the $H^1(B_R,R^M)$ due to the Rellich--Kondrachov theorem, while the second one is contained in the arbitrarily small ball. In general, however, $\chi_{B_R}u_n\notin H^1(\R^N,\R^M)$, so we introduce a smooth function $\f_R:\R^N\to [0,1]$ having properties similar to those of $(1-\chi_{B_R})$.\\
\indent To this end consider a function $\f\in C^\infty(\R)$ such that  $0\leq \f\leq 1$, $\f(t)= 0$ for $t \leq 1$ and $\f(t) =1$ for $t \geq 4$.
For $R>0$, let $\f_R:\R^N\to \R$ be given by
$$
\f_R(x):=\f(R^{-2}|x|^2),\qquad x\in \R^N.
$$
Then $\f_R\in C^\infty (\R^N)$, $0\leq\f_R\leq 1$, $\f_R(x) = 0$ for $x\in B_R$ and $\f_R(x) = 1 $ for $x\in \R^N \setminus B_{2R}$.\\
\indent For any $R>0$ and $n\in\N$, $\f_Ru_n\in H^1_0(B_n,\R^M)$ so  test \eqref{on RN} with $\f_Ru_n$ and get
\begin{gather*}{\mathcal B}[u_n,\f_R u_n]=
\int_{\R^N}\left(\f_R\sum_{i,j=1}^N\la A^{ij}\part_ju_n,\part_iu_n\ra+\sum_{i,j=1}^N\part_i\f_R\la A^{ij}\part_ju_n,u_n\ra\right.+\\
+\left.\sum_{i=1}^N\f_R\la B^i\part_iu_n,u_n\ra+\f_R\la Cu_n,u_n\ra\right)\d x=\int_{\R^N}\f_R\la f(x,u_n,\part u_n),u_n\ra\d x,\end{gather*}
where we integrate over $\R^N$ since $\supp\f_Ru_n\subset \cl B_n$. The strong ellipticity $({\mathcal P}_2')$ implies that
\begin{gather*}
\theta\int_{\R^N}\f_R|\part u_n|^2\d x\leq
\int_{\R^N}\f_R\sum_{i,j=1}^N\la A^{ij}\part_ju_n,\part_iu_n\ra\d x=
\int_{\R^N}\f_R\la f(x,u_n,\part u_n),u_n\ra\d x+\\-
\int_{R^N}\sum_{i,j=1}^N\part_i\f_R\la A^{ij}\part_ju_n,u_n\ra\d x-\int_{R^N}\left(\sum_{i=1}^N\f_R\la B^i\part_iu_n,u_n\ra-\f_R\la Cu_n,u_n\ra\right)
\d x.
\end{gather*}
Consequently
\be\label{e 9}\theta\int_{|x|\geq 2R}|\part u_n|^2\d x \leq \theta\int_{\R^N}\f_R|\part u_n|^2\d x\leq I_0(n,R)+I_1(n,R)+I_2(n,R),\ee
where
\begin{gather*}I_0(n,R):=\int_{|x|\geq R}|f(x,u_n,\part u_n)||u_n|\d x,\; I_1(n,R):=\int_{R\leq |x|\leq 2R}|u_n|\sum_{i,j=1}^N|A^{ij}||\part_ju_n||\part_i\f_R|\d x,\\
I_2(n,R):=\int_{|x|\geq R}|u_n|\sum_{i=1}^N|B^i||\part_iu_n|+|C||u_n|^2\d x.
\end{gather*}
We estimate the right hand side summands. Firstly, we use \eqref{L2 estimate} and get
\begin{gather}\nonumber I_0(n,R)=\int_{|x|\geq R} |f(x,u_n,\part u_n)||u_n|\d  x=\int_{B_n\setminus B_r}
|f(x,u_n,\part u_n)||u_n|\d x\leq \\
\label{e 11}\leq\|f(\cdot,u_n,\part u_n)\|_{L^2(B_n,\R^M)}\left(\int_{|x|\geq R}m^2(x)\d x\right)^{1/2}\to 0 \text{ as } R\to \infty,
\end{gather}
since, in view of \eqref{e 7a} and \eqref{e 8}, the first factor above is bounded.\\
\indent Using the properties of $\f_R$, we obtain
\begin{gather*}I_1(n,R)\leq \max_{1\leq i,j\leq N}|A^{ij}|\int_{R\leq |x|\leq 2R}\left(\sum_{j=1}^N|u_n||\part_ju_n|\right)\left(\sum_{i=1}^N|\part_i\f_R|\right)\d x\leq\\
\leq N\max_{i,j}|A^{ij}|\int_{R\leq|x|\leq 2R}|u_n||\part u_n||\part\f_R|\d x\leq
N\sup_{t\in\R}|\f'(t)|\max_{i,j}|A^{ij}|\int_{R\leq|x|\leq 2R}|u_n||\part u_n|\frac{2|x|}{R^2}\d x\\
\leq\frac{4N}{R}\sup_{t\in\R}|\f'(t)|\max_{i,j}|A^{ij}|\|u_n\|_{L^2}\|u_n\|_{H^1(\R^N,\R^M)}
\leq \frac{4N}{R}\sup_{t\in\R}|\f'(t)|\max_{i,j}|A^{ij}|M_0M_1,
\end{gather*}
since, on account of \eqref{e 8}, $\|u_n\|_{H^1(\R^N,\R^M)}=\|u_n\|_{H^1(B_n,\R^M)}\leq \|u_n\|_{H^2(B_n,\R^M)}\leq M_1$. Hence
\be\label{e 10}I_1(n,R)\to 0\; \text{as}\; R\to\infty\; \text{for}\; n\in\N.\ee
Finally
\begin{gather}\nonumber I_2(n,R)\leq\max_{i}\|B^i\|_{L^\infty}\int_{|x|\geq R}|u_n|\sum_{i=1}|\part_iu_n|\d x+\|C\|_{L^\infty}\int_{|x|\geq R}|u_n|^2\d x\leq\\
\nonumber\leq \sqrt{N}\max_{i}\|B^i\|_{L^\infty}\int_{|x|\geq R}m(x)|\part u|\d x+\|C\|_{L^\infty}\int_{|x|\geq R}m^2(x)\d x\leq \\
\label{e 10a} \leq\left(\sqrt{N}\max_{i}\|B^i\|_{L^\infty}M_1+\|C\|_{L^\infty}\right)\int_{|x|\geq R}m^2(x)\d x\to 0\; \text{as}\; R\to\infty
\end{gather}
uniformly for $n\in\N$. \\
\indent By \eqref{e 9}, \eqref{e 11}, \eqref{e 10} and \eqref{e 10a}, we find that $\sup_{n\geq 1}|u_n|_{1,2,\R^N\setminus B_{2R}}\to 0$ as $R\to\infty$. Hence and again by \eqref{L2 estimate}
\be\label{e 12}\sup_{n\geq 1}\|u_n\|_{H^1(\R^N\setminus B_R,\R^M)}\leq\sup_{n\geq 1}\left(\|m\|_{L^2(\R^\setminus B_R)}^2+|u_n|^2_{1,2,\R^N\setminus B_R}\right)\underset{R\to\infty}{\longrightarrow}0.\ee
\indent Take an arbitrary $\eps>0$ and $R_0>0$ such that for $R\geq R_0$
$$\sup_{n\geq 1}\|u_n\|_{H^1(\R^N\setminus B_{R},\R^M)}<\eps.$$
Then, for any $n\geq 1$ and some constant $c>0$ independent of $n$ we have,
\be\label{e 13}\|\f_{R_0}u_n\|_{H^1(\R^N,\R^M)}\leq
c\|\f_{R_0}\|_{W^{1,\infty}(\R)}\|u_n\|_{H^1(\R^N\setminus B_{R_0},\R^M)}<\eps c(1+\|\part\f_{R_0}\|_{L^\infty(\R)}).\ee
For $n\geq 2R_0$, we have $(1-\f_{R_0})u_n\in H^2(B_{2R_0},\R^M)$ and
$$\|(1-\f_{R_0})u_n\|_{H^2(B_{2R_0},\R^M)}\leq c\|\f_{R_0}\|_{W^{2,\infty}(\R)}\|u_n\|_{H^2(B_n,\R^M)}\leq c\|\f_{R_0}\|_{W^{2,\infty}(\R)}M_1.$$
This shows that the set $\{(1-\f_{R_0})u_n\}_{n\geq 2R_0}$ is bounded in $H^2(B_{2R_0},\R^M)$.  In view of the Rellich--Kondrachov theorem it is relatively compact in $H^1(B_{2R_0},\R^M)$.  At the same time this set is contained in $H_0^1(B_{2R_0},\R^M)$; the latter space (if we think of its elements as being extended onto $\R^N$)  is closed in $H^1(\R^N,\R^M)$. Therefore $\{(1-\f_{R_0})u_n\}_{n\geq 1}$ is relatively compact in $H^1(\R^N,\R^M)$. Summing up, for every $\eps>0$ there is $R_0>0$ such that
$$\{u_n\}_{n\geq 1}\subset \{(1-\f_{R_0})u_n\}_{n\geq 1}+ \{\f_{R_0}u_n\}_{n\geq 1},$$
where the first set is relatively compact in $H^1(\R^N,\R^M)$  while the second one the ball $B_{H^1(\R^N,\R^M)}(0,\eps)$. This proves the claim.

\noindent {\bf Step\, III}: If $u_0$ is a cluster point of $\{u_n\}$, then $u_0$ is a strong solution to \eqref{main problem}, \eqref{cons}. Indeed, without loss of generality we may assume that $u_n\to u_0$ in $H^1(\R^N,\R^M)$ and in $L^{2^\ast}(\R^N,\R^M)$. Therefore  $u_n(x)\to u_0(x)$ and $\part u_n(x)\to\part u_0(x)$ for a.a. $x\in\R^N$; moreover there are $h_0\in L^{2^\ast}(\R^N)$ and $h_1\in L^2(\R^N)$ such that $|u_n|, |u_0|\leq h_0$ and $|\part u_n|, |\part u_0|\leq h_1$ a.e. on $\R^N$. \\
\indent It is clear that $u_0(x)\in K(x)$ for a.a. $x\in\R^N$. To see that $u_0$ is a weak solution take an arbitrary $\psi\in C^\infty_0(\R^N,\R^M)$. The $H^1$-continuity of $\mathcal B$ implies that
\be\label{e 14}{\mathcal B}[u_n,\psi]\to {\mathcal B}[u_0,\psi]\;\; \text{as}\; n\to\infty.\ee
The continuity of $f(x,\cdot,\cdot)$ for a.a. $x\in\R^N$ implies that
$$f(x,u_n(x),\part u_n(x))\to f(x,u_0(x),\part u_0(x))\;\; \text{a.e. as}\; n\to\infty$$
and, due to the growth conditions, for some $c>0$,
$$|f(x,u_n(x),\part u_n(x))-f(x,u_0(x),\part u_0(x))|\leq c\gamma(x)$$
for a.a. $x\in\R^N$, where $\gamma(x):=\beta(x)+h_0^s(x)+h_1^q(x)$, $x\in\R^N$.
H\"older's inequality with suitable exponents (see, e.g., Remark \ref{about sol}) shows that $\gamma(\cdot)|\psi|\in L^1(\R^N)$. Then, by the Lebesgue theorem
$$\int_{R^N}\la f(x,u_n,\part u_n),\psi\ra\d x\to\int_{R^N}\la f(x,u_0,\part u_0),\psi\ra\d x\; \text{as}\;\; n\to\infty.$$
Therefore $u_0$ is a weak solution to \eqref{main problem}, \eqref{cons}.\\
\indent  Now take an arbitrary bounded $\Omega\subset\R^N$ and $R>0$ such that $\Omega\subset B_R$. If $n\geq R$, then the restriction $w_n$ of $u_n$ to $\Omega$ belongs to $H^2(\Omega,\R^M)$. By \eqref{e 8}, $\sup_{n\geq R}\|w_n\|_{H^2(\Omega,\R^M)}<\infty$, thus (up to a subsequence) $(w_n)$ converges  weakly  to $w_0\in H^2(\Omega,\R^M)$ and $w_n\to w_0$ in $H^1(\Omega,\R^M)$. This implies that $w_0$ is the restriction of $u_0$ to $\Omega$. We have shown that the weak solution $u_0\in H^2(\Omega,\R^M)$, i.e., $u_0$ is a strong solution.\end{proof}

\section{Sectorial operators and constrained degree}\label{appendix}
Here we collect some relevant facts used throughout the paper. We discuss assumptions and provide some examples  as well as we present the construction of the coincidence degree.

\subsection{Tangent cones {\rm (see e.g. \cite[Chapter 4]{Aubin})}}\label{sto} Let $\K$ be a closed subset of a Banach space $(X,\|\cdot\|)$ and  $x\in \K$. The  {\em Clarke} (or {\em circatangent}) {\em cone} to the set $K$ at $u$ is defined by
$$T_\K(u):=\{v\in X\mid \lim_{h\to 0^+,\,y\to u,\,y\in\K}h^{-1}d(y+hv,\K)=0\},$$
where $d(x,\K)=\inf_{v\in\K}\|x-v\|$ for $x\in X$. Obviously, $T_\K(u)$ is a convex cone.
If $\K$ is convex, then
$$T_\K(u)=\cl {\bigcup_{h>0}h^{-1}(\K-u)}$$
is the cone {\em tangent} to $\K$ in the sense of convex analysis and $v\in T_\K(u)$ if and only if $p(v)\leq 0$ for any $p\in X^*$ such that $p(w-u)\leq 0$ for all $w\in\K$. Observe that if $u\in\mathrm{Int}\,\K$, then $T_\K(u)=X$.\\
\begin{example}\label{au} {\em Let $K(\cdot)$ satisfy assumption \ref{constraint}. If  $\K\subset X=L^2(\Omega,\R^M)$ is defined by \eqref{zbior}, then, given $u\in\K$,
$$v\in T_\K(u) \iff v(x)\in T_{K(x)}(u(x))\;\;\text{for a.a. } x\in\Omega,$$
in view of \cite[Corollary 8.5.2]{Aubin}.}\end{example}

\subsection{Sectorial operators {\rm (see, e.g., \cite[Chapter 1.3]{Cholewa})}}\label{sect}
Let $(X,\|\cdot\|)$ be a (real) Banach space. A closed densely defined linear operator $\A:X\supset D(\A) \to X$ is a \emph{sectorial} (of angle $<\pi/2$)  if there are $0<\phi<\pi/2$, $M\geq 1$ and $a\in \R$ such that the spectrum $\sigma(\A)$ of $\A$ is contained in the sector $S_{\phi,a}:=\{\lambda\in\C\mid \lambda=a+re^{i\theta}, r>0, |\theta|<\phi\}\cup\{a\}$
and for $\lambda\notin S_{\phi,a}$
$$\|(\lambda I -\A)^{-1}\|_{\Lin (X)}\leq M|\lambda - a|^{-1} (\footnote{$\Lin(X)$ denotes the space of bounded linear operators on $X$.}).$$
It is well-known that  $\A$ is a sectorial operator if and only if  $-\A$ generates the {\em holomorphic  semigroup} $\{e^{-t\A}\}_{t\geq 0}$ and one has $\|e^{-t\A}\|\leq M'e^{-at}$ for $t\geq 0$ and some $M'\geq 1$.\\
\indent  If a sectorial operator $\A$ is {\em positive}, i.e., $\Re\lambda>0$ for $\lambda\in \sigma(\A)$, then for any $\alpha> 0$ the improper integral
$$\A^{-\alpha} := \frac{1}{\Gamma (\alpha)}\int_0^\infty t^{\alpha-1}e^{-t\A}\d t$$
converges in the norm topology in $\Lin (X)$ and $\A^{-\alpha}$ is injective. Let $\A^{\alpha}:=(\A^{-\alpha})^{-1}: X^\alpha\to X$, where $X^\alpha:=\A^{-\alpha}(X)$ is
the \emph{fractional space} associated with $\A$. $X^\alpha$ is a Banach space endowed with the norm $\|x\|_\alpha:=\|A^\alpha x\|$, $x\in X^\alpha$.  We also put $X^0:=X$ and $\A^0:=I$,  the identity on $X$. For each $\alpha\geq 0$, $\A^\alpha$ is a densely defined closed linear operator; for all $0\leq\alpha\leq\beta$, the embedding $X^\beta\hookrightarrow X^\alpha$ is dense and continuous; it is compact provided $\A$ has {\em compact}  resolvents. Observe that $X^1=D(\A)$; the norm $\|\cdot\|_1$ is equivalent to the graph norm in $D(\A)$.\\
\indent If $\A$ is a sectorial operator, then there is $d\in\R$ such that $\A_d:=\A+dI$ is positive (e.g. $d> -a$). Hence, given $\alpha\geq 0$, we may consider the fractional space $X_d^\alpha$ associated with $\A_d$ endowed with the norm $\|x\|_\alpha=\|x\|_{\alpha,d}:=\|\A^\alpha_d x\|$, $x\in X^\alpha$.
\begin{remark}\label{rem:0.1}{\em Different choices of $d$ give the same fractional space and equivalent norms on it (see \cite[Theorem 1.4.6]{Henry}). This implies that for a  sectorial operator $\A$ the fractional space $X^\alpha$ is uniquely defined as a topological vector space: regardless the choice of the norm there is no ambiguity in topological terminology. When speaking of its norm $\|\cdot\|_\alpha$, however, one has to specify a suitable $d$. }\end{remark}

\indent Let $\omega:=-a$. If $\A:D(\A)\to X$ is a sectorial operator, then $\{\lambda\in\C\mid \Re\lambda>-\omega\}$ is contained in the resolvent set $\rho(-\A)$.  Given $h>0$ with $h\omega<1$,
\be\label{resolvent}J_h=J_h^{-\A}=(I+h\A)^{-1}:X\to X\ee
is well-defined and
\be\label{resolvent2}J_h(X)\subset D(\A).\ee
Let us collect several well-known properties of $J_h$.
\begin{lemma}\label{lem:0.2} If $h>0$ and $h\omega<1$, then:
\begin{enumerate}
\item[{\em (i)}] if $h'>0$ and $h'\omega<1$, then
$J_h=J_{h'}\le \frac{h'}{h}I+ \le 1-\frac{h'}{h}\pr J_h\pr$;
\item[{\em (ii)}]  $\|J_h\|_{\Lin(X^\alpha)}\leq M(1-h\omega)^{-1}$  for every $\alpha\geq0$;
\end{enumerate}
moreover for all $\alpha\in [0,1)$:
\begin{enumerate}
\item[{\em (j)}] $\|J_h\|_{\Lin(X,X^\alpha)} \leq ch^{-\alpha}(1-h\omega)^{\alpha-1
}$ for some $c>0$;
\item[{\em (jj)}] if $\A$ has compact resolvent, then $J_h \in \Lin(X,X^\alpha)$ is compact;
\item[{\em (jjj)}] $\|J_hx - x\|_\alpha \to 0$ as $h\to 0^+$, for every $x\in X^\alpha$;
\item[{\em (jv)}]  the map $X\times (0,\omega_0) \ni (x,t) \mapsto J_h x \in X^\alpha$ is continuous, where $\omega_0:=\infty$ if $\omega\leq 0$ and $\omega_0:=\omega^{-1}$ if $\omega>0$.\hfill $\square$
\end{enumerate}
\end{lemma}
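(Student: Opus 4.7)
The plan is to derive each part from standard properties of sectorial operators. Throughout, I use that $J_h=(I+h\A)^{-1}$ and the fractional power $\A_d^\alpha$ (for any admissible shift $d$, as in the definition of $X^\alpha$ preceding the lemma) commute on $X^\alpha$, being both continuous functions of $\A$ via the functional calculus available for sectorial operators. Part (i) is purely algebraic: from $(I+h\A)J_h=I$ one has $h\A J_h=I-J_h$, and applying $(I+h'\A)$ to both sides of the proposed identity reduces it precisely to this relation. Part (ii) then follows immediately by commutation: for $x\in X^\alpha$,
\[
\|J_h x\|_\alpha=\|\A_d^\alpha J_h x\|=\|J_h \A_d^\alpha x\|\leq \|J_h\|_{\Lin(X)}\|x\|_\alpha,
\]
combined with the sectorial resolvent bound $\|J_h\|_{\Lin(X)}\leq M(1-h\omega)^{-1}$ obtained by specializing the estimate for $(\lambda I+\A)^{-1}$ to $\lambda=h^{-1}$.

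The main obstacle is (j); for it the plan is to exploit the Laplace-transform representation
\[
J_h=\int_0^\infty e^{-u}\,e^{-uh\A}\,\d u,
\]
derived from the identity $(\lambda I+\A)^{-1}=\int_0^\infty e^{-\lambda s}e^{-s\A}\,\d s$ (valid for $\Re\lambda>\omega$) via the substitution $s=uh$ with $\lambda=h^{-1}$. Composing with $\A_d^\alpha$ on the left and invoking the standard smoothing estimate $\|\A_d^\alpha e^{-t\A}\|_{\Lin(X)}\leq c\,t^{-\alpha}e^{\omega t}$ (valid for $t>0$, $\alpha\in[0,1)$, see Henry) gives
\[
\|J_h\|_{\Lin(X,X^\alpha)}\leq c\,h^{-\alpha}\int_0^\infty u^{-\alpha}e^{-u(1-h\omega)}\,\d u=c\,\Gamma(1-\alpha)\,h^{-\alpha}(1-h\omega)^{\alpha-1},
\]
the integral converging precisely because $\alpha<1$. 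A naive interpolation between $\|J_h\|_{\Lin(X)}\leq C(1-h\omega)^{-1}$ and $\|\A J_h\|_{\Lin(X)}\leq Ch^{-1}(1-h\omega)^{-1}$ (the latter from $\A J_h=h^{-1}(I-J_h)$) would produce only $(1-h\omega)^{-1}$, not the sharper $(1-h\omega)^{\alpha-1}$ required, which is where the integral-representation refinement earns its keep.

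The remaining items are standard consequences of the preceding ones. For (jj), factor $J_h\colon X\to X^1\hookrightarrow X^\alpha$: the first arrow is bounded since $\A J_h=h^{-1}(I-J_h)$ is a bounded operator on $X$, and the second is compact by the assumption that $\A$ has compact resolvent, which implies $X^\beta\hookrightarrow X^\alpha$ compactly whenever $\beta>\alpha$. For (jjj), commute $\A_d^\alpha$ past $J_h$ to reduce to $\|J_h y-y\|_X\to 0$ for $y:=\A_d^\alpha x\in X$; verify this first for $y\in D(\A)$ via the identity $J_hy-y=-hJ_h\A y$, then pass to general $y\in X$ by density of $D(\A)$ in $X$ together with the uniform bound from (ii). Finally, for (jv), decompose
\[
J_{t_n}x_n-J_t x=J_{t_n}(x_n-x)+(J_{t_n}x-J_t x);
\]
the first summand tends to zero by (j), uniformly for $t_n$ in a compact subinterval of $(0,\omega_0)$, while the second is controlled via the resolvent identity $J_t-J_{t_0}=\frac{t_0-t}{t_0}\,J_t(I-J_{t_0})$, the bound (j) on $\|J_t\|_{\Lin(X,X^\alpha)}$ near $t_0$, and the strong convergence (jjj).
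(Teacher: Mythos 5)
The paper does not prove this lemma at all --- the $\square$ follows the statement immediately, and the surrounding text introduces these as a collection of ``well-known properties'' of the resolvent of a sectorial generator. So there is no paper proof to compare against; I can only assess your argument on its own merits. It is essentially correct and is the standard route: the algebraic resolvent identity for (i), commuting $\A_d^{\alpha}$ past $J_h$ via functional calculus for (ii), the Laplace-transform representation $J_h=\int_0^\infty e^{-u}e^{-uh\A}\,\d u$ together with the fractional-power smoothing estimate for (j), factoring $J_h$ through $X^1\hookrightarrow X^\alpha$ for (jj), density plus the identity $J_hy-y=-hJ_h\A y$ for (jjj), and the first resolvent equation for (jv). Your observation that naive interpolation between $\Lin(X)$ and $\Lin(X,X^1)$ would give only $(1-h\omega)^{-1}$ rather than $(1-h\omega)^{\alpha-1}$ is a genuine and worthwhile point in favor of the integral representation. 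Two small remarks: first, the invocation of (jjj) inside the proof of (jv) is superfluous --- the resolvent identity together with (j) already kills the second summand as $t\to t_0$. Second, and more substantively, the smoothing estimate you quote, $\|\A_d^\alpha e^{-t\A}\|\leq c\,t^{-\alpha}e^{\omega t}$, is not quite the one that comes straight out of Henry: one has $\|\A_d^\alpha e^{-t\A_d}\|\leq C_\alpha t^{-\alpha}e^{-\delta t}$ for any $\delta<d-\omega$, hence $\|\A_d^\alpha e^{-t\A}\|\leq C_\alpha t^{-\alpha}e^{(d-\delta)t}$ with $d-\delta>\omega$, so you only get the bound $c\,h^{-\alpha}(1-h(d-\delta))^{\alpha-1}$ on the slightly smaller range $h<(d-\delta)^{-1}$, with a constant $c$ that degrades as $d-\delta\downarrow\omega$. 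This does not matter for any use of (j) in the paper (which only needs a uniform bound on compact subintervals of $(0,\omega_0)$ and the $h^{-\alpha}$ rate as $h\to0^+$), but if you want the literal statement of (j) you should either flag the dependence of $c$ on an upper bound for $h$, or cite a sharper form of the smoothing estimate.
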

\subsubsection{}\label{secHil}Sectorial operators in Hilbert spaces are generated in the following  way; see \cite[Section 7.3.2, Corollary 7.3.5]{Haase} or \cite[Theorem 2.18]{Yagi} for details. Let a Hilbert space $V$ be a dense subset of a Hilbert space $X$ and assume that the embedding $V\hookrightarrow X$ is continuous. Let a bilinear form ${\mathcal B}:V\times V\to\R$ be continuous, i.e., ${\mathcal B}[u,v]\leq c\|u\|_V\|v\|_V$, $u,v\in V$, and weakly coercive, i.e., there are $\omega\in\R$ and $\alpha>0$ such that \be\label{coercive}{\mathcal B}[v,v]+\omega\|v\|^2_X\geq\alpha\|v\|^2_V\; \text{for}\; v\in V.\ee
If $\mathcal A:V\to V^*$ is given by $[\mathcal A u](v):=\mathcal B(u,v)$, $u,v\in V$, then the part $\A:={\mathcal A}|_X$ of $\mathcal A$ in $X=X^\ast$, given by $\A u:=\mathcal A u$ for $u\in D(\A):=\{u\in V\mid\mathcal A u\in X^*\}$ is a sectorial operator (with $a=-\omega$). Clearly, for $u\in D(\A)$ and $v\in V$, $\la\A u,v\ra_X={\mathcal B}[u,v]$.

\subsection{Semigroup invariance}\label{INV} Let $\K\subset X$, where $X$ is a Banach space, be closed and convex and let $\A:D(\A)\to X$ be a sectorial operator.
\begin{proposition}\label{invariance 1} The following conditions are equivalent:
\begin{enumerate}
\item[{\rm (i)}] $\K$ is semigroup invariant, i.e., $e^{-t\A}(\K)\subset\K$ for all $t\geq 0$;
\item[{\rm (ii)}] $\K$ is resolvent invariant, i.e., $J_h(\K)\subset \K$ for $h>0$ with $h\omega<1$;
\item[{\rm (iii)}] $\K\cap D(\A)$ is dense in $\K$ and for every $u\in\K\cap D(\A)$, $-\A u\in T_\K(u)$.
    \end{enumerate}
\end{proposition}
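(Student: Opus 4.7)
The plan is to prove the equivalences (i) $\Leftrightarrow$ (ii) and (ii) $\Leftrightarrow$ (iii) separately, with (ii) as the central pivot. The first equivalence follows from the Laplace/exponential duality between the analytic semigroup $e^{-t\A}$ and its resolvent $J_h$; for the second, the direction (ii) $\Rightarrow$ (iii) is essentially formal, while (iii) $\Rightarrow$ (ii) is the main obstacle and will rely on the nonlinear-semigroup invariance theorem.

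For (i) $\Rightarrow$ (ii): fix $h > 0$ with $h\omega < 1$. Since $\|e^{-t\A}\|_{\Lin(X)} \leq M' e^{\omega t}$ and $1/h > \omega$, the Laplace-transform representation of the resolvent
\be\label{LT}
J_h u = \frac{1}{h}\int_0^\infty e^{-t/h}\, e^{-t\A} u \,\d t
\ee
converges in $X$ for every $u \in X$. The kernel $\tfrac{1}{h}e^{-t/h}$ is a probability density on $(0,\infty)$, so the right-hand side is a Bochner convex average of the orbit; for $u \in \K$ this orbit lies in the closed convex set $\K$ by (i), hence so does $J_h u$. Conversely, for (ii) $\Rightarrow$ (i) use the exponential formula $e^{-t\A} u = \lim_n J_{t/n}^{\,n} u$ valid for every sectorial $\A$: for $u \in \K$ and $n$ so large that $(t/n)\omega < 1$, iterated application of (ii) keeps $J_{t/n}^{\,n} u \in \K$, and closedness of $\K$ passes to the limit.

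For (ii) $\Rightarrow$ (iii): density of $\K \cap D(\A)$ in $\K$ follows because for $u \in \K$ and small $h$, $J_h u \in \K \cap D(\A)$ (by (ii) together with \eqref{resolvent2}) and $J_h u \to u$ as $h \to 0^+$ (Lemma \ref{lem:0.2}(jjj) with $\alpha = 0$). For the tangency, pick $u \in \K \cap D(\A)$; the commutation $\A J_h = J_h \A$ on $D(\A)$ gives $h^{-1}(J_h u - u) = -\A J_h u = -J_h \A u$, and Lemma \ref{lem:0.2}(jjj) applied to $\A u \in X$ yields $h^{-1}(J_h u - u) \to -\A u$ as $h \to 0^+$. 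Since $J_h u \in \K$, the left-hand side lies in $h^{-1}(\K - u) \subset \bigcup_{s > 0} s^{-1}(\K - u)$, and its limit lies in the closure $T_\K(u)$ of this cone.

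The main obstacle is (iii) $\Rightarrow$ (ii). Fix $h > 0$ with $h\omega < 1$ and $u \in \K$; by continuity of $J_h$ and density of $\K \cap D(\A)$ in $\K$ one may assume $u \in \K \cap D(\A)$. The element $v := J_h u$ satisfies $v = u - h\A v$, the implicit Euler step of the flow $v' = -\A v$; the tangency $-\A w \in T_\K(w)$ for $w \in \K \cap D(\A)$ is, for convex $\K$, precisely the Nagumo subtangential condition $\liminf_{h \to 0^+} h^{-1} d(w - h\A w, \K) = 0$. The plan is to invoke the resolvent-invariance theorem for quasi-$m$-accretive operators (see \cite{Carja} and references therein): after shifting by $\omega$ so that the resulting operator is $m$-accretive on $X$, the hypotheses of (iii) are exactly what that theorem requires, and it concludes $J_h(\K) \subset \K$ for all admissible $h$. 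A self-contained derivation would iterate the implicit scheme $v_k = J_{h/n}^{\,k} u$ and control $d(v_k, \K)$ by a discrete Gronwall inequality using the tangency at each step, then pass to the limit $n \to \infty$; this is where the bulk of the technical effort would lie.
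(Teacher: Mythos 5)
Your proposal tracks the paper's proof on the first two equivalences but has a real gap in the hardest direction, and a couple of minor organizational imprecisions.

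The equivalence (i) $\Leftrightarrow$ (ii) is established by exactly the same ideas the paper invokes (Laplace representation of the resolvent in one direction, the exponential/Post-Widder formula in the other), and your probabilistic-average / iterated-resolvent formulation is a perfectly valid self-contained version of the citations in \cite{EngelNagel}. For (ii) $\Rightarrow$ (iii) you use the resolvent increment $h^{-1}(J_h u - u) = -J_h\A u \to -\A u$, whereas the paper uses the semigroup increment $t^{-1}(e^{-t\A}u-u)\to-\A u$ starting from (i); both are correct, and your variant is if anything more directly aligned with the pivot (ii).

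The gap is in (iii) $\Rightarrow$ (ii). You propose to shift $\A$ by $\omega I$ to obtain an $m$-accretive operator and then feed condition (iii) into an invariance theorem for $m$-accretive operators. But the tangency hypothesis is \emph{not} stable under that shift: the assumption is $-\A u \in T_\K(u)$, while the shifted operator requires $-(\A+\omega I)u = -\A u - \omega u \in T_\K(u)$. The extra term $-\omega u$ need not be tangent to $\K$ at $u$. (Take $\K = \{p\}$ with $p \neq 0$: then $T_\K(p) = \{0\}$ and $-\omega p \neq 0$, so even the trivially invariant singleton fails the shifted condition.) So you cannot simply pass to the $m$-accretive case. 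The paper avoids this by showing $0 \in T_\K^{-\A}(u)$ (the Pavel cone taken with respect to the flow $e^{-t\A}$ of the \emph{unshifted} generator) and invoking \cite[Theorem 8.5.5]{Carja}, which is formulated directly for quasi-dissipative operators, no shift needed. Note also that this, like your sketched iterated implicit-Euler argument, yields $e^{-t\A}(\K)\subset\K$, i.e., (iii) $\Rightarrow$ (i), not (ii) directly; that is harmless because (i) $\Rightarrow$ (ii) is already in place, but it is worth being precise about which implication the viability theorem is actually delivering.
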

\begin{proof} The equivalence (i) $\Leftrightarrow$ (ii) follows in view of the so-called Post-Widder formula \cite[Cor. III.5.5]{EngelNagel} and the integral representation of resolvents of $-\A$ in terms of the semigroup (see equality (1.13) in \cite[\S II]{EngelNagel}); see also \cite[Thm VI.1.8]{EngelNagel}.\\
\indent Assume (i), take $u\in\K$ and let $t_n\searrow 0$. Then $e^{-t_n\A}u\in \K\cap D(\A)$ since the semigroup is analytic. Evidently, $e^{-t_n\A}u\to u$ as $n\to\infty$, i.e., $\K\cap D(\A)$ is dense in $\K$. Now let $u\in\K\cap D(\A)$, then
$$-\A u=\lim_{n\to\infty}\frac{e^{-t_n\A}u-u}{t_n}\in\cl{\bigcup_{t>0}\frac{\K-u}{t}}=T_\K(u).$$
\indent Assume (iii) and let $u\in\K\cap D(\A)$. By the assumption $-\A u\in T_\K(u)$. Hence there are sequences $h_n\searrow 0$ and $v_n\to -\A u$ such that $u+h_nv_n\in\K$. Thus
$$h_n^{-1}d(e^{-h_n\A}u,\K)\leq h_n^{-1}\|e^{-h_n\A}u-(u+h_nv_n)\|=\|h_n^{-1}(e^{-h_n\A}u-u)-v_n\|\to 0.$$
This shows that $0\in T^{-\A}_\K(u)$, where $T_\K^{-\A}(u)$ is the so-called {\em Pavel cone} (see \cite[Def. 8.1.3]{Carja}) defined as
$$T_\K^{-\A}(u):=\{v\in X\mid\liminf_{t\to 0^+}t^{-1}d(e^{-t\A}u+hv,\K)=0\}.$$
Due to \cite[Theorem 8.5.5]{Carja}, $e^{-t\A}u\in\K$ for all $t\geq 0$. If $u\in\K$, then $u=\lim_{n\to\infty}u_n$, where $u_n\in\K\cap D(\A)$ and then, for $t\geq 0$,
\[e^{t\A}u=\lim_{n\to\infty}e^{-t\A}u_n\in\K.\qedhere\]
\end{proof}
A  result similar to the equivalence (i) $\Leftrightarrow$ (iii) has been established in \cite[Proposition 4.5]{Cann} by using different methods.

If $X$ is a Hilbert space and $\A$ is generated by a bilinear form $\mathcal B$ as in subsection \ref{secHil}, then we get the following results. Let $\pi_\K:X\to\K$ be the metric projection onto $\K$, i.e., for $u\in X$, $\|u-\pi_\K(u)\|_X=d(u,\K)=\inf_{w\in\K}\|u-w\|_X$. The projection $v=\pi_\K(u)$ is uniquely chracterized by
\be\label{rzut}\la u-v,w-v\ra_X\leq 0\; \text{for any}\; w\in \K.\ee
\begin{proposition}\label{invariance 2} The set $\K$ is resolvent invariant if and only if \be\label{warunek inv}\pi_\K(V)\subset V\;\;\text{and}\;\; {\mathcal B}[\pi_\K(u),u-\pi_\K(u)]\geq 0,\;\;\text{for every }u\in V.\ee
\end{proposition}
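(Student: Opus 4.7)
The plan is to treat the two implications separately, with the projection characterisation \eqref{rzut} serving as the main engine; throughout, fix $h > 0$ with $h\omega < 1$.

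For the ``if'' direction, I would take $u \in \K$, set $w := J_h u \in D(\A) \subset V$, and $v := \pi_\K(w) \in V$ (the latter by hypothesis). Using $w - u = -h\A w$ I would decompose
\[
\|w - v\|_X^2 = \langle w - u, w - v\rangle_X + \langle u - v, w - v\rangle_X = -h\,\mathcal B[w, w - v] + \langle u - v, w - v\rangle_X,
\]
where $\langle \A w, w - v\rangle_X = \mathcal B[w, w - v]$ is legitimate because $w \in D(\A)$ and $w - v \in V$. The projection inequality \eqref{rzut} applied with $z = u \in \K$ gives $\langle u - v, w - v\rangle_X \leq 0$, while splitting $\mathcal B[w, w - v] = \mathcal B[w - v, w - v] + \mathcal B[v, w - v]$ and using the hypothesis $\mathcal B[v, w - v] \geq 0$ together with weak coercivity produces $\mathcal B[w, w - v] \geq -\omega\|w - v\|_X^2$. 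The resulting estimate $(1 - h\omega)\|w - v\|_X^2 \leq 0$ then forces $w = v \in \K$.

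For the ``only if'' direction, I would fix $u \in V$, set $v := \pi_\K(u)$ and approximate by $v_h := J_h v$; resolvent invariance yields $v_h \in \K \cap D(\A) \subset \K \cap V$ and Lemma \ref{lem:0.2} gives $v_h \to v$ in $X$. The decisive step is to test the projection inequality $\langle u - v, v_h - v\rangle_X \leq 0$ against $v_h - v = -h\A v_h$: rewriting via $\langle u, \A v_h\rangle_X = \mathcal B[v_h, u]$ and $\langle v_h, v - v_h\rangle_X = h\,\mathcal B[v_h, v_h]$ produces the key inequality
\[
\mathcal B[v_h, u - v_h] \geq \tfrac{1}{h}\|v - v_h\|_X^2 \geq 0.
\]
From $\mathcal B[v_h, v_h] \leq \mathcal B[v_h, u]$, combined with continuity of $\mathcal B$, weak coercivity, and the uniform $X$-bound on $v_h$ from Lemma \ref{lem:0.2}(ii), I expect to extract a uniform $V$-bound on $\{v_h\}$. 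A weakly convergent subsequence in $V$ matched with the strong convergence in $X$ forces the weak limit to equal $v$, whence $v \in V$. Passing to the limit in $\mathcal B[v_h, u - v_h] \geq 0$ uses weak $V$-convergence for the linear term $\mathcal B[v_h, u] \to \mathcal B[v, u]$, and the weak lower semicontinuity of $v' \mapsto \mathcal B[v',v'] + \omega\|v'\|_X^2$ (a coercive symmetric form on $V$, since the antisymmetric part of $\mathcal B$ vanishes on the diagonal) combined with the strong convergence $\|v_h\|_X \to \|v\|_X$ to obtain $\liminf \mathcal B[v_h, v_h] \geq \mathcal B[v, v]$. This yields $\mathcal B[v, u - v] \geq 0$.

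The main obstacle is the ``only if'' direction: the inclusion $\pi_\K(u) \in V$ is not given a priori, and the Yosida approximation of an element known only to lie in $X$ can be arbitrarily irregular in $V$. The displayed inequality $\mathcal B[v_h, u - v_h] \geq \tfrac{1}{h}\|v - v_h\|_X^2$, which couples the projection characterisation \eqref{rzut} to resolvent invariance, is precisely what converts the hypothesis $u \in V$ into the $V$-regularity of $\pi_\K(u)$ and simultaneously sets up the limit passage.
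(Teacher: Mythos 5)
Your proof is correct and follows essentially the same route as the paper's in the ``if'' direction and in the opening of the ``only if'' direction: the decomposition of $\|w-v\|_X^2$ via $w-u=-h\A w$, the use of \eqref{rzut} with the test point $u$, and the coercivity bound $(1-h\omega)\|w-v\|_X^2\leq 0$ mirror the paper's computation; likewise the $V$-bound for $J_h\pi_\K(u)$ via $\mathcal B[J_h v,J_h v]\leq\mathcal B[J_h v,u]$, coercivity, and extraction of a weak limit identified through the embedding $V\hookrightarrow X$.

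The one place where you depart from the paper's argument is the final limit passage in the ``only if'' direction. You keep the inequality $\mathcal B[v_h,u-v_h]\geq 0$ with $v_h$ in both slots and handle the quadratic term by the weak lower semicontinuity of $v'\mapsto\mathcal B_s[v',v']+\omega\|v'\|_X^2$ (a nonnegative, hence convex, continuous quadratic form on $V$), combined with $\|v_h\|_X\to\|v\|_X$, to get $\liminf\mathcal B[v_h,v_h]\geq\mathcal B[v,v]$. That is valid. The paper instead re-invokes \eqref{rzut} once $v\in V$ is known to obtain the already-linearized inequality $\mathcal B[J_{h_n}v,u-v]=h_n^{-1}\langle v-J_{h_n}v,u-v\rangle_X\geq 0$, and then simply uses weak continuity of the fixed linear functional $\mathcal B[\cdot,u-v]$ on $V$. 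The paper's route avoids any appeal to lower semicontinuity of the quadratic form and does not need to separate out the antisymmetric part of $\mathcal B$, so it is slightly cleaner; your version shows that one does not need to go back to \eqref{rzut} a second time, at the price of a heavier convexity argument. Both are sound; there is no gap in your proposal.
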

\begin{proof} Assume that $\K$ is resolvent invariant, take $u\in V$ and let $v:=\pi_\K(u)$.
For any $h>0$, $h\omega<1$, $J_hv\in D(\A)\cap \K$ and  $\A J_hv=h^{-1}(v-J_hv)$. By \eqref{rzut}, $\la v-J_hv,v-u\ra_X\leq 0$. Therefore
$$\mathcal B[J_hv,J_hv-u]=\la \A J_hv,J_hv-u\ra_X=h^{-1}\left(\la v-J_hv,v-u\ra-\|v-J_hv\|_X^2\right)\leq 0.$$ Thus $\mathcal B[J_hv,J_hv]\leq \mathcal B[J_hv,u]$ and by \eqref{coercive} and
\be\label{nierpom}\alpha\|J_hv\|^2_V\leq \mathcal B[J_hv,J_hv]+\omega\|J_hv\|_X^2\leq \mathcal B[J_hv,u]+\omega\|J_hv\|_X^2\leq
c\|J_hv\|_V\|u\|_V+\|J_hv\|^2_X.\ee

Take a sequence $h_n\to 0^+$. By Lemma \ref{lem:0.2} (jjj), $J_{h_n}v\to v$ in $X$. Hence, and in view of \eqref{nierpom}, the sequence $(J_{h_n}v)$ is bounded in $V$ and, up to a subsequence, weakly convergent in $V$ to some $w\in V$. The continuity $V\hookrightarrow X$ implies that $v=w$. This shows that $v=\pi_\K(u)\in V$. Next, in view of \eqref{rzut} and since $J_{h_n}v\in \K$ we have that for any $n\geq 1$
$$\mathcal B[J_{h_n}v,u-v]=\la \A J_{h_n}v,u-v\ra_X=h_n^{-1}\la v-J_{h_n}v,u-v\ra_X\geq 0.$$
The weak continuity of $\mathcal B[\cdot,u-v]$ implies $\mathcal B[v,u-v]=\lim_{n\to\infty}\mathcal B[J_{h_n}v,u-v]\geq 0$.\\
\indent Conversely, assume \eqref{warunek inv}, take $h>0$, $h\omega<1$ and $u\in \K$. Let $y=J_hu$. Then, on account of \eqref{warunek inv}, \eqref{resolvent2}, $\pi_\K(y)\in \K\cap V$ and $u=y+h\A y$. In view of \eqref{rzut}, \eqref{warunek inv} and \eqref{coercive}
\begin{align*}&0\geq\la u-\pi_\K(y),y-\pi_\K(y)\ra_X=\la y-\pi_\K(y)+h\A y,y-\pi_\K(y)\ra_X\\
&=\|y-\pi_\K(y)\|_X^2+h\la \A y,y-\pi_\K(y)\ra_X=\|y-\pi_\K(y)\|_X^2+h\mathcal B[y,y-\pi_\K(y)]\\
&=\|y-\pi_\K(y)\|^2_X+h\mathcal B[y-\pi_\K(y),y-\pi_\K(y)]+h\mathcal B[\pi_\K(y),y-\pi_\K(y)]\\
&\geq (1-h\omega)\|y-\pi_\K(y)\|^2_X+h\alpha\|y-\pi_\K(y)\|_V^2\geq 0.\end{align*}
This shows that $y=\pi_\K(y)\in \
K$. \end{proof}

\subsubsection{}\label{concrete invariance} Recall \eqref{retra}, \eqref{zbior} and  the setting in Step I of the proof of Theorem \ref{thm:3.1}. If $u\in X=L^2(\Omega,\R^M)$ and $w\in \K$, then $r(x,u(x))\in \K$ and $|u(x)-r(x,u(x))|\leq |u(x)-w(x)|$ for a.a. $x\in\Omega$; hence $\|u-r(\cdot,u(\cdot))\|_X\leq\|u-w\|_X$. This shows that $\pi_{\K}:X\to \K$ given by
\be\label{metric projection}\pi_{\K}(u)=r(\cdot,u(\cdot)),\;\; u\in X,\ee
is the metric projection of $X$ onto $\K$. By Proposition \ref{invariance 2} and \eqref{metric projection} we get a characterization of assumption \ref{additional ass} (2) announced in Remark \ref{cons of additional} (i).

\begin{corollary}\label{invariance 3} Condition (2) from assumption \ref{additional ass} is satisfied if and only if \begin{gather}\label{metinv}r(\cdot,u(\cdot))\in H^1_0(\Omega,\R^M)\;\;\text{and}\\
\label{warform}\mathcal B[r(\cdot,u(\cdot)),u-r(\cdot,u(\cdot))]\geq 0,\end{gather}
for every $u\in H^1_0(\Omega,\R^M)$.\hfill $\square$
\end{corollary}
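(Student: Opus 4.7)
The plan is to chain together three ingredients already established in the excerpt, so that the statement reduces to a direct application of Proposition~\ref{invariance 2} in the concrete setting of Section~\ref{bdd}.

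First, I would recall the abstract setup from Step~I of the proof of Theorem~\ref{thm:3.1}: with $X=L^2(\Omega,\R^M)$, $V=H^1_0(\Omega,\R^M)$ and $\mathcal B$ the bilinear form \eqref{form}, the operator $\A$ generated by $\mathcal B$ as in Subsection~\ref{secHil} is sectorial, and $\K$ given by \eqref{zbior} is closed and convex in $X$. The argument in point~(c) of Step~I shows that condition (2) of assumption~\ref{additional ass}, namely the invariance of $\Gr(K)$ under the linear parabolic flow, is equivalent to the semigroup invariance $e^{-t\A}(\K)\subset\K$ for all $t\geq 0$ (one identifies the unique weak solution of \eqref{parabolic} with $t\mapsto e^{-t\A}u_0$).

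Next, by Proposition~\ref{invariance 1}, semigroup invariance of $\K$ is equivalent to resolvent invariance, i.e.\ $J_h(\K)\subset \K$ for all $h>0$ with $h\omega<1$. At this point I would invoke Proposition~\ref{invariance 2}, whose hypotheses are met since $V\hookrightarrow X$ densely and continuously and $\mathcal B$ is continuous and weakly coercive (Remark~\ref{cons of additional}~(ii)). The proposition states that resolvent invariance is equivalent to
\[
\pi_\K(V)\subset V\quad\text{and}\quad \mathcal B[\pi_\K(u),u-\pi_\K(u)]\geq 0\text{ for every }u\in V.
\]

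The final step is purely a translation: by the discussion in Subsection~\ref{concrete invariance}, the metric projection of $X$ onto $\K$ is exactly $\pi_\K(u)=r(\cdot,u(\cdot))$ (formula \eqref{metric projection}). Substituting this expression for $\pi_\K$ and using $V=H^1_0(\Omega,\R^M)$ yields precisely \eqref{metinv} and \eqref{warform}, which completes the equivalence. There is no real obstacle here; the content of the corollary lies in the upstream results (Propositions~\ref{invariance 1} and \ref{invariance 2}) together with the identification \eqref{metric projection}, and what remains is simply to assemble them in the correct order.
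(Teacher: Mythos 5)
Your proposal is correct and follows essentially the same route as the paper: the paper likewise establishes \eqref{metric projection} in Subsection~\ref{concrete invariance}, identifies assumption~\ref{additional ass}~(2) with resolvent invariance of $\K$ via the discussion in Step~I(c) of the proof of Theorem~\ref{thm:3.1} and Proposition~\ref{invariance 1}, and then reads off the corollary from Proposition~\ref{invariance 2}. No gaps; the assembly order you give is exactly what the paper intends.
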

Let us finally establish condtions sufficient for \eqref{metinv}.
\begin{proposition}\label{do inv} Suppose that, in addition to conditions stated in  assumption \ref{constraint}
\begin{align}\label{r4i}&r(\cdot,0)\in H^1_0(\Omega,\R^M)\;\; \text{and}\\
\label{r4ii}&r(\cdot,u)\in H^1(\Omega,\R^M)\;\; \text{for any}\;\; u\in\R^M.\end{align}
Then $r(\cdot,u(\cdot))\in H^1_0(\Omega,\R^M)$ for any $u\in H_0^1(\Omega,\R^M)$.
\end{proposition}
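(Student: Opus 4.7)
My plan is to reduce the problem to a compactly supported $H^1$-regularity statement, then combine a smooth approximation with a chain-rule argument adapted to the Lipschitz-in-$v$ map $v\mapsto r(x,v)$.

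First, since $r(\cdot,0)\in H^1_0(\Omega,\R^M)$ by hypothesis, the conclusion is equivalent to showing $\psi_u:=r(\cdot,u(\cdot))-r(\cdot,0)\in H^1_0(\Omega,\R^M)$. The nonexpansiveness of the projection, $|r(x,v)-r(x,w)|\leq|v-w|$, gives $|\psi_u(x)|\leq|u(x)|$ and hence $\psi_u\in L^2$. I would then approximate $u$ in $H^1_0$ by $u_n\in C_c^\infty(\Omega,\R^M)$ and set $\psi_n:=r(\cdot,u_n(\cdot))-r(\cdot,0)$. The $1$-Lipschitz continuity of the Nemytski operator $u\mapsto r(\cdot,u(\cdot))$ on $L^2$ (already noted in the preliminaries) yields $\psi_n\to\psi_u$ in $L^2$. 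Since $u_n\equiv 0$ outside a compact $S_n\Subset\Omega$, the same holds for $\psi_n$; thus each $\psi_n$ has compact support in $\Omega$, and once $\{\psi_n\}$ is shown to be bounded in $H^1(\Omega,\R^M)$, weak compactness and identification with the $L^2$-limit $\psi_u$ give $\psi_u\in H^1_0(\Omega,\R^M)$, with zero trace inherited from the compact-support property.

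Second, for the $H^1$-bound on $\psi_n$ I would invoke a Stampacchia/Marcus--Mizel-type chain rule. By Rademacher's theorem $D_v r(x,\cdot)$ exists at a.e.\ $v$ for a.e.\ $x$, with operator norm bounded by $1$. For smooth $u_n$ the chain rule should give, in the weak sense,
\[
\partial_i\psi_n(x) = \bigl[\partial_{x_i}r(\cdot,v)-\partial_{x_i}r(\cdot,0)\bigr]\big|_{v=u_n(x)} + D_v r(x,u_n(x))\,\partial_i u_n(x),
\]
so that the second summand is pointwise dominated by $|\partial_i u_n(x)|$ and controlled in $L^2$ by $\|\nabla u_n\|_{L^2}$, while the first summand is, for each fixed $v$, an $L^2$-function by the hypothesis $r(\cdot,v)-r(\cdot,0)\in H^1(\Omega,\R^M)$.

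The main obstacle is the $L^2$-control and uniform-in-$n$ bound on the first summand when $v=u_n(x)$ varies with $x$: the hypotheses deliver pointwise-in-$v$ regularity of $\nabla_x r(\cdot,v)$, whereas the chain rule needs joint measurability of $(x,v)\mapsto\nabla_x r(x,v)$ and locally uniform $L^2$-control over the compact range $\overline{u_n(\Omega)}\subset\R^M$. I expect to handle this by a measurable-selection argument: the Carath\'eodory character of $r$ yields joint measurability, and nonexpansiveness in $v$ transfers gradient estimates across nearby $v$, producing an $L^2$-majorant valid uniformly over compact $v$-ranges. With that majorant the formal chain-rule identity can be justified by mollifying $u_n$ in $x$ and passing to the limit, after which the remaining steps (uniform $H^1$-bound, weak compactness, $L^2$-identification) are routine.
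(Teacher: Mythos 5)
Your overall scaffold matches the paper's: approximate $u$ by $C^\infty_0$ functions $u_n$, note that $\psi_n := r(\cdot,u_n(\cdot))-r(\cdot,0)$ vanishes outside the (compact) support of $u_n$, use this to place $\psi_n$ in $H^1_0$, and pass to the limit. The terminal limit step you propose (pointwise $L^2$-convergence of $\psi_n$ plus a uniform $H^1_0$-bound, then weak compactness) is a perfectly acceptable variant of the paper's appeal to $H^1$-continuity of the Nemytski operator.

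The genuine gap lies exactly where you flag it, and the fix you propose does not work. To get each $\psi_n$ in $H^1$ with a uniform bound, you would need to control $\nabla_x r(\cdot,v)$ in $L^2(\Omega)$ uniformly over $v$ in the compact range of $u_n$. You suggest deriving such an $L^2$-majorant from the nonexpansiveness of $r(x,\cdot)$, arguing that it ``transfers gradient estimates across nearby $v$.'' It does not: nonexpansiveness gives $|r(x,v)-r(x,w)|\leq|v-w|$, i.e.\ $L^\infty$-closeness of $r(\cdot,v)$ and $r(\cdot,w)$, but a function that is uniformly small can still have a large $L^2$-gradient (think of $\varepsilon\sin(x/\varepsilon^2)$). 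So the hypotheses $r(\cdot,v)\in H^1$ for each fixed $v$ and 1-Lipschitz continuity in $v$ do \emph{not} by themselves yield a locally uniform $L^2$-majorant for $\nabla_x r$, and the chain-rule computation cannot be closed this way. This is precisely the difficulty the paper avoids by citing Marcus--Mizel's Lemma 5 (\cite{MarcusMizel2}) as a black box to obtain $r(\cdot,u(\cdot))\in H^1(\Omega,\R^M)$ directly; the rest of the paper's proof is the same compact-support reduction (with the $H^1_0$-membership justified via \cite{Egert}) followed by an $H^1$-continuity-of-Nemytski argument (\cite{MarcusMizel}). If you want a self-contained argument you must either invoke Marcus--Mizel's theorem as the paper does, or impose an explicit locally uniform $L^2$-bound $|\nabla_x r(x,v)|\leq g_R(x)$ for $|v|\leq R$, which is \emph{not} among the stated hypotheses.
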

\noindent Condition \eqref{r4i} means that, in a  sense, $K(\cdot)$ has an extension onto $\cl\Omega$ and $0\in K(x)$ if $x\in\part\Omega$.
\begin{proof} Let $u\in H^1_0(\Omega,\R^M)$. In view of \eqref{r4ii} and \cite[Lemma 5]{MarcusMizel2} (see also \cite{MarcusMizel1}) $r(\cdot,u(\cdot))\in H^1(\Omega,\R^M)$.
If  $u\in C^\infty_0(\Omega,\R^M)$, i.e., $u$ vanishes outside a compact subset $C$ of $\Omega$, then letting $w(x):=r(x,u(x))-r(x,0)$, $x\in \Omega$, we see that $w\in  H^1(\Omega,\R^M)$ and $w(x)=0$ for $x\in\Omega\setminus C$. By results of \cite{Egert},  $w\in H_0^1(\Omega,\R^M)$ and, by \eqref{r4i}, $r(\cdot,u(\cdot))\in H_0^1(\Omega,\R^M)$.  In general
$u\in H^1_0(\Omega,\R^M)$ is the $H^1$-limit of $u_n\in C_0^\infty(\Omega,\R^M)$, so the result follows from the $H^1$-continuity of the Nemytski operator generated by $r$ (see \cite{MarcusMizel}).\end{proof}

\subsection{Examples of constraints}\label{co} We provide some examples of a constraint $K:\Omega\multi\R^M$, where $\Omega\subset\R^N$, having the properties studied above. In each of these examples we describe tangent cones showing the nature of the tangency hypothesis.

\begin{example}\label{przyklady}{\em (1) (Moving rectangle) Assume that $\sigma,\tau\in H^1(\Omega,\R^M)$, $\sigma\leq \tau$ and let
$$K(x):=[\sigma(x),\tau(x)]=\{w\in\R^M\mid \sigma_k(x)\leq w_k\leq\tau_k(x), k=1,\ldots,M\},\; x\in\Omega.$$
Such constraints has been studied, e.g., in \cite{Kuiper1, Kuiper2} in case $\Omega$ is bounded.
It is immediate to see that conditions from assumption \ref{constraint} are satisfied. For each $x\in\Omega$, the projection $r(x,\cdot)$ of $\R^M$ onto $K(x)$ is given by $r(x,\cdot)=(r_1(x,\cdot),\ldots,r_M(x,\cdot))$, where for $k=1,\ldots,M$ and $u=(u_1,\ldots,u_m)\in\R^M$ and
\begin{gather}\label{retrakcja1}r_k(x,u)=(\tau_k-\sigma_k-(u_k-\tau_k)^-)^+
+\sigma_k=\begin{cases}\sigma_k(x)&\text{if}\;\;
u_k<\sigma_k(x)\\
u_k&\text{if}\;\; \sigma_k(x)\leq u_k\leq \tau_k(x)\\
\tau_k(x)&\text{if}\;\; \tau_k(x)<u_k.\end{cases},\end{gather}
In view of \cite[Section 7.4]{Gilbarg}, $r_k(\cdot,u)\in H^1(\Omega)$, so \eqref{r4ii} is satisfied. If $\sigma_k|_{\part\Omega}\leq 0$ and $\tau_k|_{\part\Omega}\geq 0$ in the sense of trace ($k=1,\ldots,M$), then \eqref{r4i} holds, too.
Fix  $x\in \Omega$ and take $w\in K(x)$.
Then $T_{K(x)}(w)=\R^M$ if $\sigma_k(x)<w_k<\tau_k(x)$ ($k=1,\ldots,M$) and
$$v=(v_1,\ldots,v_M)\in T_{K(x)}(w)\;\iff\;\begin{cases}v_k\geq 0&\text{if}\; w_k=\sigma_k(x),\\
v_k\leq 0&\text{if}\; w_k=\tau_k(x).\end{cases}
$$
(2) (Tube) Let $K
\subset \R^M$ be closed convex and bounded, $b\in H^1(\R^N,\R^M)$,
$\alpha\in H^1(\R^N)$ with $\essinf\alpha>0$ and
$$K(x)=b(x)+\alpha(x)K,\; x\in\Omega.$$
Similar constraints were studied in \cite{schroder}. If $s:\R^M\to K$ is the metric projection onto $K$, then
\be\label{tube}r(x,u)=b(x)+\alpha(x)s(\alpha(x)^{-1}(u-b(x))),\; x\in\Omega, u\in\R^M.\ee
Clearly $K(\cdot)$ satisfies assumption \ref{constraint} and \eqref{r4i}; condition \eqref{r4i} holds if $-\alpha^{-1}(x)b(x)\in K$ for $x\in\part \Omega$ in the sense of trace. It is easy to see that $T_{K(x)}(w)=T_K(u)$, for a.a. $x\in\Omega$ and $w=b(x)+\alpha(x)u\in K(x)$, where $u\in K$.

\noindent (3) (Ellipsoidal funnel) Let $K$ be the closed unit ball in $\R^M$ and all entries of a matrix-valued map $E:\Omega\to \R^{M\times M}$ belong to $H^1(\Omega)$ and let $\essinf_{x\in\Omega}\det E(x)>0$. One may show that, for $u\in\R^M$, the map
$$\Omega\ni x\mapsto y(x,u)=\argmin_{y\in K}\left[\frac{1}{2}\la \prescript\intercal{}\!E(x)E(x)y,y\ra-\la\prescript\intercal{}\!E(x)u, y\ra\right],$$
is in $H^1(\Omega,\R^M)$. The funnel
$$K(x):=E(x)B,\;\;x\in\Omega,$$
where $B$ is the closed unit ball, consists of {\em ellipsoids} $K(x)$ centered at the origin. It satisfies assumption \ref{constraint} and \eqref{r4i}, \eqref{r4ii} in view of the explicit formula of the projection $r(x,u)=E(x)y(x,u)$, $x\in\Omega$, $u\in\R^M$.  Moreover,  $v\in T_{K(x)}(u)$ if and only if $\la E(x)^{-1}v,E(x)^{-1}u\ra\leq 0$, for a.a. $x\in\Omega$ and $u\in K(x)$.

\noindent (4) (Moving polyhderon) Suppose that  a set $P\subset\{p\in\R^M\mid |p|=1\}$  is at most countable and consider
\be\label{repr}K(x):=\bigcup_{p\in P}K_p(x), \;\; K_p(x):=\{u\in\R^M\mid \la p,u\ra\leq \xi_p(x)\},\;x\in\Omega,\ee
where $\xi_p\in H^1(\Omega)$ and $\xi_p|_{\part\Omega}\geq 0$ in the sense of trace (\footnote{Observe that this representation is fairly general, since for a proper closed convex subset $K$ of a separable Banach space $X$ there is a countable family $P$ of the dual  $X^*$ such that, for any $p\in P$,  $a_p:=\sup_{x\in K}\la p,x\ra<\infty$ and
$K=\{u\in X\mid \la p,u\ra\leq a_p\; \text{for all}\; p\in P\}$. The set consists of all {\em supporting functionals} of $K$.}).
Properties (1), (2) from assumption \ref{constraint}, \eqref{r4i} and \eqref{r4ii} are  satisfied. For $x\in\Omega$ and $u\in K(x)$, let $P(u)=\{p\in P\mid \la p,u\ra=\xi_p(x)\}$. Then $T_{K(x)}(u)=\{v\in \R^M\mid \la p,v\ra\leq 0,\;\forall\,p\in P(u)\}$, for a.a. $x\in\Omega$.}
\end{example}
\indent We have the following result akin to criteria from \cite[Theorem 4.1]{Chueh} and \cite[Theorem 14.7]{Smoller}.
\begin{proposition}\label{invev} Recall the operator $\mathcal P$ defined by \eqref{diffop} and satisfying assumptions \ref{differential operator} and \ref{additional ass} (1). Let $K(\cdot)$ be defined as in Example \ref{przyklady} (4) above. Assumption \ref{additional ass} (2) is fulfilled  if:\\
\indent {\em (i)} for all $1\leq i,j\leq N$, any $p\in P$ is an eigenvector of transposed matrices $\prescript\intercal{}\!A^{ij}$, $\prescript\intercal{}\!B^i$ and $\prescript\intercal{}C$, i.e.,
\be\label{eigen}\prescript\intercal\!\!A^{ij}(x)p=a^{ij}(x)p,\;
\prescript\intercal{}\!B^i(x)p=b^i(x)p,\;
\prescript\intercal{}C(x)p=c(x)p\ee
for a.a. $x\in\Omega$ and some functions $a^{ij},b^i,c:\Omega\to\R$, and\\
\indent (ii)  for any $p\in P$,  $\mathcal B[\xi_p(\cdot)p,\eta(\cdot)p]\geq 0$ for any $\eta\in H_0^1(\Omega)$, $\eta\geq 0$.
\end{proposition}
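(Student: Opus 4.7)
The plan is to invoke Corollary \ref{invariance 3}, reducing assumption \ref{additional ass}(2) to the verification, for every $u \in H_0^1(\Omega,\R^M)$, of (a) $r(\cdot,u(\cdot)) \in H_0^1(\Omega,\R^M)$ and (b) $\mathcal B[r(\cdot,u(\cdot)),\, u - r(\cdot,u(\cdot))]\geq 0$. Condition (a) will follow at once from Proposition \ref{do inv}, since \eqref{r4i} and \eqref{r4ii} were already recorded for the polyhedral $K(\cdot)$ of Example \ref{przyklady}(4) (where $\xi_p \in H^1(\Omega)$ and $\xi_p|_{\part\Omega}\geq 0$ in trace are explicitly assumed). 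The nontrivial content is thus the variational inequality (b).

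I would first settle the heart of the matter, the singleton case $P=\{p\}$. Here the projection has the explicit form $v := r(\cdot,u) = u - \phi\, p$ with $\phi := (\langle p,u\rangle - \xi_p)_+$. Using $u\in H_0^1$, $\xi_p|_{\part\Omega}\geq 0$ and the chain rule for the positive part, one has $\phi \in H_0^1(\Omega)$ and $\phi\geq 0$. Expanding $\mathcal B[v,\phi p]$ via $\part_i(\phi p) = (\part_i\phi) p$ together with the eigenvector identities from (i) --- $\langle A^{ij} w, p\rangle = a^{ij}\langle w,p\rangle$ and the analogues for $B^i, C$ --- gives
\[
\mathcal B[v, \phi p] = \int_\Omega \Bigl[\sum_{i,j} a^{ij}\, (\part_i\phi)\,\part_j\langle p,v\rangle + \sum_i b^i\, \phi\,\part_i\langle p,v\rangle + c\,\phi\,\langle p,v\rangle\Bigr]\, dx.
\]
On $\{\phi>0\}$ one has $\langle p,v\rangle = \xi_p$ pointwise, while on $\{\phi=0\}$ both $\phi$ and $\part\phi$ vanish a.e., so the integrand is unchanged if $\langle p,v\rangle$ is replaced by $\xi_p$. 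Since $|p|=1$ forces $\langle A^{ij}p,p\rangle = a^{ij}$ and analogously $\langle B^i p, p\rangle = b^i$, $\langle Cp,p\rangle = c$, the resulting expression is precisely $\mathcal B[\xi_p p,\phi p]$, which is nonnegative by hypothesis (ii) with $\eta = \phi$.

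For a general (at most) countable $P$ I would regularise the projection by a Moreau--Yosida scheme,
\[
r_\varepsilon(x,u) := \argmin_{y\in \R^M}\Bigl[\tfrac12 |y-u|^2 + \tfrac{1}{2\varepsilon}\sum_{p\in P}(\langle p,y\rangle - \xi_p(x))_+^2\Bigr],
\]
whose Euler--Lagrange equation gives $u - r_\varepsilon = \sum_p \mu_p^\varepsilon\, p$ with $\mu_p^\varepsilon := \varepsilon^{-1}(\langle p,r_\varepsilon\rangle - \xi_p)_+\geq 0$. Arguing as in Proposition \ref{do inv} one checks $r_\varepsilon(\cdot,u), \mu_p^\varepsilon \in H_0^1$. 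Rerunning the single-constraint computation term by term produces
\[
\mathcal B[r_\varepsilon, \mu_p^\varepsilon p] = \mathcal B[\xi_p p, \mu_p^\varepsilon p] + \varepsilon\, \mathcal B[\mu_p^\varepsilon p, \mu_p^\varepsilon p],
\]
so summing in $p$ and invoking (ii) gives $\mathcal B[r_\varepsilon,\, u - r_\varepsilon] \geq -\varepsilon\cdot C$, for $C$ controlled by uniform $L^2$-bounds on the multipliers. Sending $\varepsilon\to 0^+$ with strong $H_0^1$-convergence $r_\varepsilon\to r(\cdot,u)$ then yields the inequality.

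The hard part will be this limiting step: producing strong $H_0^1$-convergence (not merely the automatic $L^2$-convergence coming from Moreau--Yosida theory) and controlling $\sum_p \|\mu_p^\varepsilon\|_{L^2}^2$ uniformly in $\varepsilon$, both of which become delicate when $P$ is infinite. The single-constraint step, by contrast, is essentially mechanical once the eigenvector identity has been unpacked.
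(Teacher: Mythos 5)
Your treatment of the single-constraint case $P=\{p\}$ is correct and matches the paper's computation almost line for line: write $v = u - \phi\,p$ with $\phi = (\la p,u\ra - \xi_p)^+$, expand $\mathcal B[v,\phi p]$ using the eigenvector identities \eqref{eigen}, observe that on $\{\phi>0\}$ one may replace $\la p,v\ra$ by $\xi_p$, and invoke (ii) with $\eta = \phi$. However, your passage to general countable $P$ via Moreau--Yosida regularisation contains a genuine gap, and one which you yourself flag: you do not establish strong $H^1_0$-convergence $r_\e\to r(\cdot,u)$ nor the uniform control of $\sum_p\|\mu_p^\e\|_{L^2}^2$, and both are real obstacles when $P$ is infinite. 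As written the proposal does not prove the proposition.

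The difficulty is self-inflicted: you set out to verify \eqref{warform} directly for the projection onto $\K$, whose metric projection has no explicit formula, and this is why the regularisation scheme appeared. The paper avoids all of this with a simple structural observation. Since $K(x)=\bigcap_{p\in P}K_p(x)$, one has $\K=\bigcap_{p\in P}\K_p$ with $\K_p=\{u\in L^2(\Omega,\R^M)\mid \la p,u(x)\ra\leq\xi_p(x)\ \text{a.e.}\}$, and resolvent invariance passes to intersections trivially: if $J_h(\K_p)\subset\K_p$ for every $p$ then $J_h\bigl(\bigcap_p\K_p\bigr)\subset\bigcap_p\K_p$. Thus it suffices to show that each half-space set $\K_p$ is resolvent invariant, and the projection onto $\K_p$ \emph{does} have the explicit form $\pi_{\K_p}(u)=u-(\la p,u(\cdot)\ra-\xi_p)^+p$. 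Your single-constraint computation, applied to $\K_p$ via Proposition \ref{invariance 2} (together with Proposition \ref{do inv} for the $H^1_0$-regularity of the projection), is then exactly the whole proof. No regularisation, no multipliers, no limit. In short: you already had the substantive part; replacing the Moreau--Yosida step by the remark that invariance of $\K$ follows from invariance of each $\K_p$ closes the gap and reduces your argument to the paper's.
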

\begin{proof} It is clear that $\K=\bigcap_{p\in P}\K_p$, where $\K_p=\{u\in L^2(\Omega,\R^M)\mid u(x)\in K_p(x)\;\text{a.e.}\}$. Hence, to show the invariance of $\K$ it is sufficient to show the invariance of $\K_p$, $p\in P$. By Proposition \ref{invariance 2} (or Corollary \ref{invariance 3}) together with Proposition \ref{do inv}, it is enough to prove that for any $u\in H_0^1(\Omega,\R^M)$, $\mathcal B[\pi(u),u-\pi(u)]\geq 0$, where $\pi:=\pi_{\K_p}$ is the projection onto $\K_p$. The metric projection onto $K_p(x)$ is given by
\be\label{halfspace}r(x,u)=u-(\la u,p\ra-\xi_p
(x))^+p,\; u\in\R^M,\;\; x\in\Omega, u\in\R^M.\ee
Thus $\pi$ is given by the formula
$$\pi(u)=u-(\la p,u(\cdot)\ra-\xi_p)^+p,\; u\in L^2(\Omega,\R^M).$$
To simplify the notation let
$v:=(\la p,u(\cdot)\ra-\xi_p)^+$. Clearly, $v\in H_0^1(\Omega)$, $v\geq 0$ and $v=0$ off the set
 $\Omega_0:=\{x\in\Omega\mid \la p,u(\cdot)\ra>\xi_p\}$.  By \cite[Section 7.4]{Gilbarg}, for any $i=1,\ldots,N$, $\part_iv=\la p,\part_iu(\cdot)\ra-\part_i\xi_p$ on $\Omega_0$ and 0 elsewhere. Our assumptions yield the following equalities on $\Omega_0$
\begin{gather*}
\sum_{i,j=1}^N\la A^{ij}(\part_ju-\part_jv(\cdot)p),\part_iv(\cdot)p\ra
+\sum_{i=1}^N\la B^i(\part_iu-\part_iv(\cdot)p),v(\cdot)p\ra +\la C(u-v(\cdot)p), v(\cdot)p\ra\\
=\sum_{i,j=1}^N\la \part_ju-\part v_j(\cdot)p,\part_iv(\cdot)\!\prescript\intercal{}\!A^{ij}p\ra+
\sum_{i=1}^N\la\part_iu-\part_i(\cdot)p,v(\cdot)\!\prescript\intercal{}\!B^ip\ra+\la u,v(\cdot)\!\prescript\intercal{}Cp\ra\\= \sum_{i,j=1}^Na^{ij}\part_j\xi_p\part_iv+\sum_{i=1}^Nb^i\part_i\xi_pv+c\xi_pv\\=
\sum_{i,j=1}^N\la A^{ij}\part_j\xi_p(\cdot)p,\part_iv(\cdot)p\ra+\sum_{i=1}^N\la B^i\part_j\xi_p(\cdot)p,v(\cdot)p\ra+\la C\xi_p(\cdot)p,v(\cdot)p\ra.
\end{gather*}
This, in view of (ii), implies that
\[
\mathcal B[\pi(u),u-\pi(u)]=\mathcal B[u-v(\cdot)p,v(\cdot)p]=\mathcal B[\xi_p(\cdot)p,v(\cdot)p]\geq 0.\qedhere
\]
\end{proof}
Proposition \ref{invev} together with Proposition \ref{invariance 2} give  sufficient conditions for the flow invariance of $\mathcal P$. It, however, suggests  that
a convex closed set having the large collection of supporting functionals (e.g. an ellipsoid) is, in general, flow invariant only when $A^{ij}=a_{ij}I$, $B^i=b_iI$ and $C=cI$ for some $a_{ij}, b_i, c\in\R$, for $i,j=1,\ldots,N$. This also explains the setting concerning operators and constraining sets in \cite{Amann} and other papers mentioned in Introduction. As a further example  we have the following immediate result corresponding to M\"uller's conditions.
 \begin{corollary}\label{invev 1}
Suppose $K(\cdot)$ is given by Example \ref{przyklady} (1). If the operator $\mathcal P$ is diagonal, i.e., for each $1\leq i,j\leq N$, matrices of coefficients $A^{ij}$, $B^i$ and $C$ are diagonal, $A^{ij}_{kl}=\delta_{kl}a^{ij}_k$, $B^i_{kl}=\delta_{kl}b^i_k$ and $C_{kl}=\delta_{kl}c_k$ for $1\leq k,l\leq M$ ($\delta_{kl}$ stands for the Kronecker delta), then assumption \ref{additional ass} (2) is satisfied if
$${\mathcal B}_k[\sigma_k,\eta]\leq 0\;\; \text{and}\;\; {\mathcal B}_k[\tau_k,\eta]\geq 0\;\;\text{for any}\;\; \eta\in H^1_0(\Omega),\;\eta\geq 0,$$
where
$${\mathcal B}_k[u,v]:=\int_\Omega\left(\sum_{i,j=1}^Na^{ij}_k\part_j\part_iv+
\sum_{i=1}^Nb^i_k\part_iuv+c_kuv\right)\d x,\;\; u,v\in H^1_0(\Omega).$$
\end{corollary}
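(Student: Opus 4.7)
The plan is to realize the moving rectangle of Example \ref{przyklady} (1) as a countable (in fact finite) intersection of half-spaces of the form \eqref{repr} and then invoke Proposition \ref{invev}. Denote by $e_1,\ldots,e_M$ the standard basis of $\R^M$. For $k=1,\ldots,M$ the constraint $\sigma_k(x)\le w_k\le\tau_k(x)$ is equivalent to the two linear inequalities $\la e_k,w\ra\le \tau_k(x)$ and $\la -e_k,w\ra\le -\sigma_k(x)$. Thus, with $P:=\{\pm e_k\mid k=1,\ldots,M\}$, $\xi_{e_k}:=\tau_k$ and $\xi_{-e_k}:=-\sigma_k$, one has $K(x)=\bigcap_{p\in P}K_p(x)$ in the sense of \eqref{repr}. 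Since $\sigma_k,\tau_k\in H^1(\Omega)$, each $\xi_p\in H^1(\Omega)$, and the boundary sign conditions stated in Example \ref{przyklady} (1) translate into $\xi_p|_{\part\Omega}\ge 0$ in the trace sense for every $p\in P$. (The proof of Proposition \ref{invev} uses intersections, so the formula in \eqref{repr} should be read as such; this is the only mild typographic point.)

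Next I verify the eigenvector condition \eqref{eigen}. Diagonality of $\mathcal P$ means $A^{ij}=\diag(a^{ij}_1,\ldots,a^{ij}_M)$, and analogously for $B^i$ and $C$; their transposes coincide with themselves. Therefore, for every $k$,
\be
\prescript\intercal\!\!A^{ij}(\pm e_k)=a^{ij}_k(\pm e_k),\quad
\prescript\intercal\!\!B^{i}(\pm e_k)=b^{i}_k(\pm e_k),\quad
\prescript\intercal\!\!C(\pm e_k)=c_k(\pm e_k),
\ee
so each $p=\pm e_k\in P$ is a common eigenvector of all the transposed coefficient matrices, as required by item (i) of Proposition \ref{invev}.

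It remains to check item (ii): $\mathcal B[\xi_p(\cdot)p,\eta(\cdot)p]\ge 0$ for every $p\in P$ and every $\eta\in H^1_0(\Omega)$ with $\eta\ge 0$. Since for any scalar functions $u,v\in H^1_0(\Omega)$ one computes directly from \eqref{form}, using diagonality,
\be
\mathcal B[u\,e_k,v\,e_k]=\int_\Omega\!\left(\sum_{i,j=1}^N a^{ij}_k\part_ju\,\part_iv+\sum_{i=1}^N b^i_k\part_iu\,v+c_k uv\right)\!\d x=\mathcal B_k[u,v],
\ee
the requirement for $p=e_k$ becomes $\mathcal B_k[\tau_k,\eta]\ge 0$, while for $p=-e_k$, using bilinearity, it reduces to $\mathcal B[\sigma_k e_k,-\eta e_k]=-\mathcal B_k[\sigma_k,\eta]\ge 0$, i.e., $\mathcal B_k[\sigma_k,\eta]\le 0$. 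Both coincide with the hypothesis, so Proposition \ref{invev} applies and yields assumption \ref{additional ass} (2).

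There is no real obstacle here: the corollary is essentially a bookkeeping specialisation of Proposition \ref{invev} to the rectangular case, and the only point requiring attention is choosing the correct sign for $p$ and $\xi_p$ when the half-space is defined by a lower bound $\sigma_k$, which is handled by the bilinearity computation above.
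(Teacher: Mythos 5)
Your proof is correct and follows essentially the same route as the paper: both realize the moving rectangle as $K(x)=\bigcap_{p\in\{\pm e_k\}}K_p(x)$ with $\xi_{e_k}=\tau_k$, $\xi_{-e_k}=-\sigma_k$, observe that the diagonal (hence symmetric) coefficient matrices have $e_k$ as eigenvectors, and reduce condition (ii) of Proposition \ref{invev} to $\mathcal B_k[\tau_k,\eta]\ge 0$ and $\mathcal B_k[\sigma_k,\eta]\le 0$ by the sign computation $\mathcal B[\sigma_k e_k,-\eta e_k]=-\mathcal B_k[\sigma_k,\eta]$. Your remark that \eqref{repr} should read as an intersection (not a union) is accurate and consistent with how Proposition \ref{invev} is actually proved.
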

\begin{proof} Observe that $K(\cdot)$ has the representation \eqref{repr}
$$K(x)=\bigcup_{p\in P}K_p(x),$$
where  $P=\{e_1,\ldots,e_M,-e_1,\ldots,-e_M\}$, $e_k=(\delta_{1k},\ldots,\delta_{kM})$ and $\xi_{e_k}:=\tau_k$, $\xi_{-e_k}=-\sigma_k$, $k=1,\ldots,M$. It is clear that, for $k=1,\ldots,m$, $e_k$ is an eigenvector of $\prescript\intercal{}\!A^{ij}$, $\prescript\intercal{}\!B^i$ and $\prescript\intercal{}C$, and, for every $\eta\in H^1_0(\Omega)$, $\eta\geq 0$,
${\mathcal B}[\xi_{e_k}(\cdot)e_k,\eta(\cdot)e_k]={\mathcal B}_k[\tau_k,\eta_k]\geq 0$ and
${\mathcal B}[\xi_{-e_k}(\cdot)(-e_k),\eta(\cdot)(-e_k)]=-{\mathcal B}_k[\sigma_k,\eta]\geq 0$.
\end{proof}

\subsection{Construction of the constrained topological degree}\label{constrained degree}
Let $X$ be a real Banach space.
We provide the construction of a topological invariant detecting constrained coincidences of $\A$ and $\F$, i.e., solutions to
\[
\A u = \F(u)
\]
in a set  $\K\subset X$ of constraints, where
\begin{enumerate}
\item[$(D_1)$] $\A:D(\A)\to X$ is a sectorial operator with compact resolvents,
\item[$(D_2)$] $\F:U\cap\K\to X$ is a continuous map, where $U$ is an open subset of the fractional space $X^\alpha$ associated with $\A$, where $\alpha\in [0,1)$.
\end{enumerate}
As mentioned in Introduction, in the studied situation the direct use of the Leray--Schauder theory is not possible. The approach we present has been started in \cite{CwiszewskiKryszewski2}. Then, relying on the ideas of \cite{Karsatos}, it has been developed in \cite{CwiszewskiKryszewski}, where $\F$ was defined on an open subset of $X$ (if $\alpha=0$). The need to consider a more general case, when $\F$ is defined on a fractional space stems from applications, e.g., those discussed in the beginning of this paper, where the right-hand side of a differential problem depends on the gradient of an unknown function.
This was already observed in \cite{KryszewskiSiemianowski}, but  $\K$ and $\F$ are assumed to be bounded there.
These assumptions are so strong that results of \cite{KryszewskiSiemianowski} are hardly applicable to differential systems \eqref{main problem}.
Therefore, it is necessary to modify the approach from \cite{CwiszewskiKryszewski} accordingly.\\
\indent Simple examples show that in order to get  meaningful results one should impose some structural assumptions. We assume that
\begin{enumerate}
\item[$(D_3)$] $\K\subset X$ is an $\Lin$-retract (\footnote{Even though our applications concern the convex $\K$ we decided to provide the construction in a more general situation having the future reference on mind.}).
\end{enumerate}
Recall that a closed set $\K$ of a Banach space $X$ is called an {\em $\Lin$-retract} (see \cite{BenK}) if there is $\eta>0$, a continuous map $r:B(\K,\eta)\to\K$, where $B(\K,\eta):=\{x\in X\mid d(x,\K)<\eta\}$, and $L\geq 1$ such that $\|r(x)-x\|\leq Ld(x,\K)$ for any $x\in B(\K,\eta)$. $\Lin$-retracts constitute a broad subclass of neighborhood retracts containing many classes of sets considered as constraint sets; in particular {\em any} closed convex set $\K\subset E$ is an $\Lin$-retract; see \cite{BenK} for more details and other examples of $\Lin$-retracts. \\
\indent Moreover we assume that
\begin{enumerate}
\item[$(D_4)$] $\K$ is resolvent invariant, i.e., $J_h(\K)\subset \K$ for all $0<h\leq h_0$, where $h_0\omega<1$;
\item[$(D_5)$] $\F$ is tangent to $\K$, i.e., $\F(u)\in T_\K(u)$ for $u\in U\cap \K$.
\end{enumerate}
Let
$$C=\Coin(\A,\F;U\cap\K):=\{u\in U\cap\K\mid u\in D(\A)\; \text{and}\; \A u=\F(u)\}.$$
The inclusion $C\subset D(A)$ implies that $C\subset X^\beta$ for every $\beta\in [0,1]$. We assume that
\begin{enumerate}
\item[$(D_6)$] $C$ is closed in $X^\alpha$, $C$ and $\F(C)$ are bounded in $X$.
\end{enumerate}
\begin{remark}\label{uwagi}{\em (1)  Assumption $(D_6)$ holds if and only if $C$  is {\em compact} in $X^\beta$, for every $\beta\in [0,1)$. Indeed, if $C$ and $\F(C)$ are bounded, then $C$ is relatively compact in $X^\alpha$ since
$C\subset \{J_h(u+h\F(u))\mid u\in C\}\subset X^\alpha$ and  $J_h:X\to X^\alpha$ is compact by Lemma \ref{lem:0.2}.   For $\beta\in [\alpha,1)$, $C$ is closed in $X^\beta$, since $X^\beta\subset X^\alpha$ is continuous, and thus compact in $X^\beta$. If $\beta\in [0,\alpha)$, then $C$ is compact in $X^\beta$ due to the continuity of the embedding $X^\alpha\subset X^\beta$.\\
\indent (2) Observe that $C$ is always closed in $U$ since $\F$ is continuous and $\A$ has the closed graph. Evidently if $\K^\alpha\subset U$, then $C$ is closed in $X^\alpha$.\\
\indent (3) Suppose that $\K$ is bounded. Then assumption $(D_4)$, along with $(D_1)$ and $(D_3)$, implies that, for any $\beta\in [0,1)$, the set $\K^\beta:=\K\cap X^\beta$ is of {\em finite homological type}, i.e., for each $q\geq 0$ the vector space $H_q(\K^\beta)$, where $H_*(\cdot)$ stands for the singular homology functor with the rational coefficients, is finite dimensional and $H_q(\K^\beta)=0$ for almost all $q\geq 0$. To see this let $O:=B(\K,\eta)$ and let $\phi(x):=J_h\circ r(x)$, $x\in O$, where $0<h\leq h_0$ is fixed. Then $\phi:O\to\K^\beta$ is a well-defined continuous compact map. Let $j:\K^\beta\hookrightarrow O$ be the embedding and $\bar\phi:=j\circ\phi:O\to O$. In view of the so-called normalization property of the Leray--Schauder fixed point index $\bar\phi$ is a Lefschetz map (see \cite[Theorem (7.1)]{Granas}) and its generalized Lefschetz number $\Lambda(\bar\phi)$ is defined.  The  commutativity of the following diagram
$$\xymatrix{O\ar[r]^-\phi\ar[d]^-{\bar\phi}&{\K^\beta}\ar@<-2pt>[d]^-{\phi|_{\K^\beta}}
\ar[dl]_-j\\
O\ar[r]_-{\phi}&{\K^\beta}}$$
implies that $\phi|_{\K^\beta}$ is a Lefschetz map (see \cite[Lemma (3.1)]{Granas}) and $\Lambda(\phi|_{\K^\beta})=\Lambda(\bar\phi)$. We show below that $\phi|_{\K^\beta}$ is homotopic to the identity $\id:\K^\beta\to\K^\beta$ and, thus, the endomorphisms $H_*(\phi|_{\K^\beta})=H_*(\mathrm{id})=\mathrm{id}_{H_*(\K^\beta)}$
are the Leray endomorphisms. Hence, the graded vector space $H_*(\K^\beta)$ is of finite type. In particular the {\em Euler characteristic} $\chi(\K^\beta):=\sum_{q\geq 0}(-1)^q\dim_\Q H_q(\K^\beta)$ is a well-defined integer number. Moreover
$\chi(\K^\beta)=\lambda(\id)=\Lambda(\id)=\Lambda(\bar\phi)$, where $\lambda(\id)$ is the usual Lefschetz number, does not depend on $\beta\in [0,1)$. \\
\indent To show that $\phi|_{\K^\beta}$ is homotopic to the identity consider a map
$\Phi:\K^\beta\times [0,1]\to\K^\beta$ given by
$$\Phi(x,t):=\begin{cases}J_{th}(x),&\text{for}\; x\in \K^\beta,\; t\in (0,1],\\
x,&\text{for}\; x\in\K^\beta,\; t=0.\end{cases}$$
In view of Lemma \ref{lem:0.2} (jv), $\Phi$ is continuous on $\K^\beta\times (0,1]$, so let us consider sequences $x_n\to x_0$ in $\K^\beta$ and $t_n\searrow 0$. By Lemma \ref{lem:0.2} (ii) and (jjj), we have
\be\label{d}\begin{split}\|\Phi(x_n,t_n)-\Phi(x_0,0)\|_\beta=\|J_{t_nh}x_n - x_0\|_\beta\leq \|J_{t_nh}x_n - J_{t_n h}x_0\|_\beta + \|J_{t_nh}x_0 - x_0\|_\beta\\
\leq \frac{M}{1-t_n h\omega} \|x_n - x_0\|_\beta + \|J_{t_nh}x_0 - x_0\|_\beta \to 0,\quad n\to \infty.\end{split}\ee
Evidently $\Phi$ is a homotopy joining $\phi|_{\K^\beta}$ to $\id$.
}
\end{remark}

Let us now present the steps of the construction.

\indent {\em Step 1}: Let $\tilde\F:U\to X$ be a continuous extension of $\F$. Note that  $\F$ is defined on a closed subset of $U$, so $\tilde\F$ exists  in view of the Dugundji extension theorem.\\
\indent {\em Step 2}: Fix $\eta >0$, $L\geq 1$ and a retraction $r:B_X(K,\eta)\to K$ such that $\|r(x) - x\| \leq L d(x,\K)$ for $x\in B_X(K,\eta)$.\\
\indent {\em Step 3}: Since $C$ is compact in $X^\alpha$  and $\tilde\F$ is continuous, one can find an open bounded subset $W\subset X^\alpha$ such that
\begin{equation}\label{eq:1.2}
C\subset W\subset \cl W \subset U\cap (B_X(\K,\eta/2)\cap X^\alpha),
\end{equation}
and $\tilde\F(\cl W)$ is bounded in $X$, where $\cl W$ is the closure of $W$ in $X^\alpha$. Note that $B_X(\K,\eta/2)\cap X^\alpha$ contains $C$ and, since the embedding $X^\alpha\hookrightarrow X$ is continuous, $B_X(\K,\eta/2)\cap X^\alpha$ is open in $X^\alpha$.\\
\indent {\em Step 4}: Since the set $\tilde\F(\cl W)$ is bounded we may assume that $\|h\tilde\F(x)\|\leq\eta/2$ for $h\in (0,h_0]$ and $x\in\cl W$. By \eqref{eq:1.2}, for any $x\in W$, $d(x,\K)<\eta/2$ and, thus,
$x+h\tilde\F(x)\in B_X(\K,\eta)$. Therefore and in view of $(D_4)$ the map $\phi_h:\cl W\to X^\alpha$, where $h\in (0,h_0]$, given by the formula
\[
\phi_h (x) = J_h\circ r (x + h\tilde\F(x)),\qquad x\in \cl W,
\]
is well-defined.
The map $\phi_h$ is compact due to the fact that $\A$ has compact resolvents and $r$ maps bounded sets into bounded ones.
Moreover, if $h>0$ is small enough, then $\Fix(\phi_h):=\{ x\in \cl W \mid \phi_h(x) = x\} \subset W$.
Indeed, if $x\in\Fix(\phi_h)$, then $x\in\cl W\cap \K$ since $J_h(\K)\subset \K$. Hence $\tilde\F(x)=\F(x)$ and $x=J_h(r(x+h\F(x)))$. Suppose to the contrary that there are  a sequences $h_n\searrow 0$ and $(x_n)\subset\part W\cap\Fix(\phi_n)$. Hence $x_n+h_n\A(x_n)=r(x_n+h_n\F(x_n))$. Arguing as in the proof of \cite[Lemma 3.3]{CwiszewskiKryszewski} (with obvious modifications) we gather that, after passing to a subsequence if necessary, $x_n\to x_0\in C\cap\part W$. This contradiction proves our assertion. \\
\indent {\em Step 5}: Without loss of generality we may assume that $\phi_h$ is well-defined compact and $\Fix(\phi_h)\subset W$ for any $h\in (0,h_0]$. This implies that for any such $h$, the {\em Leray--Schauder fixed point index} $\Ind_{LS}(\phi_h,W)$ is well-defined (see \cite[Sections 7, 8]{Granas}). Therefore, we are in a position to define the {\em constrained  topological degree} $\deg_\K(\A,\F;U)$ {\em of coincidence} between $\A$ and $\F$ on $U$  as follows:
\be\label{defdeg}
\deg_\K(\A,\F;U) : =\lim_{h\to 0^+} \Ind_{LS}(\phi_h, W).
\ee
We claim that the definition \eqref{defdeg} is correct, i.e.,
\begin{itemize}
\item it stabilizes, i.e., $\Ind_{LS}(\phi_{h_1},W)=\Ind(\phi_{h_2},W)$ for sufficiently small $h_1$, $h_2>0$,
\item it  is independent of the choice of an $\Lin$-retraction $r$,
\item it is independent of the choice of a bounded open neighbourhood $W$ of $C$,
\item it is independent of the extension $\tilde\F$.
\end{itemize}
These issues can be shown in a similar manner as in the proof of  \cite[Lemma 3.5 and Lemma 3.6]{CwiszewskiKryszewski}.\\
\indent The definition \eqref{defdeg} also does not depend on the choice of a constant $d$, where $d$ is such that $\A_d=\A+d I$ is positive and determines the norm $\|\cdot\|_{\alpha^,d}$ on the fractional space $X^\alpha$. Indeed, if we take another $\hat d>-\omega$, then the identity provides a (topological) homeomorphism between $(X^\alpha,\|\cdot\|_{\alpha,d})$ and $(X^\alpha,\|\cdot\|_{\alpha,\hat d})$ in view of Remark \ref{rem:0.1}. The claim  follows now immediately from the so-called {\em commutativity property} of the Leray--Schauder index (see \cite[Theorem (7.1)]{Granas}). One gets in fact a stronger result.
\begin{remark} {\em The so-called contraction property of the Leray--Schauder index (see \cite[Theorem (6.2), $\S$ 12]{GranasDugundji}) implies that for all $h\in (0,h_0]$,
$\Ind_{LS}(\phi_h,W)=\Ind_{LS}(\phi_h|_{W\cap X^\beta},W\cap X^\beta)$, where $\beta\in [\alpha,1)$. This implies that $\deg_\K(\A,\F,U)$ does not depend on the particular choice $\alpha\in [0,1)$, namely, if $\beta\in [\alpha,1)$, then $\deg_\K(\A,\F;U)=\deg_\K(\F|_{U\cap X^\beta\cap\K};U\cap\ X^\beta)$.
}
\end{remark}

By an {\em admissible homotopy} we understand a continuous map $\H: (U\cap \K)\times [0,1] \to X$, where $U\subset X^\alpha$ is open, such that $H(u,t) \in T_\K(u)$ for all $u\in U\cap \K$, $t\in [0,1]$, the sets $C=\bigcup_{t\in [0,1]}\Coin(\A,\H(\cdot,t);U\cap \K)$ and $\H(C\times [0,1])$ are bounded in $X$ and $C$ is closed in $X^\alpha$.
\begin{remark}\label{hom} {\em Arguing as above we easily see $C$ is compact in $X^\alpha$ and one  can find an open bounded $W\subset X^\alpha$ such that
$C\subset W \subset \cl W\subset U\cap (B_X(\K,\eta/2)\cap X^\alpha)$ and $H(\cl W \times  [0,1])$ is bounded in $X$. Then, for sufficiently small $h>0$ the map $\phi_h: \cl W\times [0,1] \to X^\alpha$ given by $\phi_h (x,t):= J_h\circ r(x + h\tilde\H(x,t))$ for $x\in \cl W$, $t\in [0,1]$, where $\tilde\H:U\to X$ is a continuous extension of $\H$,  is well-defined compact and
\[
\{x\in \cl W\mid \exists t \in [0,1],\;\phi_h(x,t) =x\}\subset W
\]
provided $h>0$ is sufficiently small, see \cite{CwiszewskiKryszewski}.}
\end{remark}

\begin{theorem}\label{thm:1.1}
The degree defined by \eqref{defdeg} has the following properties:

\noindent \emph{(Existence)} If\, $\deg_\K(\A,\F;U)\neq 0$, then $\Coin(\A,\F;U\cap\K)\neq\emptyset$.\\
\emph{(Additivity)} If\, $U_1$, $U_2\subset U$ are open disjoint and $\Coin(\A,\F;U)\subset (U_1\cup U_2)\setminus (U_1\cap U_2)$, then
\[
\deg_\K(\A,\F; U)=\deg_\K(\A,\F;U_1) +\deg_\K(\A,\F,U_2).
\]
\emph{(Homotopy invariance)} If\, $H:U\times [0,1] \to X$ is an admissible homotopy, then
\[
\deg_\K(\A,\H(\cdot,0),U)= \deg_\K(\A,\H(\cdot,1),U).
\]
\emph{(Normalisation)} If\, $\K$ is bounded, $\F:\K^\alpha\to X$ and $\F(\K^\alpha)$ is bounded in $X$, then for any open $U\subset X^\alpha$ such that $\K^\alpha\subset U$,
\[
\deg_\K(\A,\F,U)=\chi(\K).
\]
\end{theorem}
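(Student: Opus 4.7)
The strategy is to transfer each property to the corresponding feature of the Leray--Schauder fixed-point index of the approximating compact maps $\phi_h$ from Step 4 of the construction, then pass to the limit $h\to 0^+$. The tangency of $\F$ together with the $\Lin$-retraction estimate $\|r(y)-y\|\le L\,d(y,\K)$ drive all limit arguments.

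\emph{Existence.} From $\deg_\K(\A,\F;U)\neq 0$ and \eqref{defdeg}, pick $h_n\searrow 0$ with $\Ind_{LS}(\phi_{h_n},W)\neq 0$, producing fixed points $x_n\in W$. Since $x_n\in J_{h_n}(\K)\subset \K$, the fixed-point equation reads $(I+h_n\A)x_n=r(x_n+h_n\F(x_n))$. Compactness of $J_h:X\to X^\alpha$ together with the boundedness of $\tilde\F(\cl W)$ extracts a subsequence with $x_n\to x_0$ in $X^\alpha$, and then $x_0\in\cl W\cap\K\subset U\cap \K$. Setting $y_n:=x_n+h_n\F(x_n)$, the tangency $\F(x_0)\in T_\K(x_0)$ gives $h_n^{-1}d(x_n+h_n\F(x_0),\K)\to 0$, which continuity of $\F$ transfers to $h_n^{-1}d(y_n,\K)\to 0$; the $\Lin$-retraction estimate then yields $h_n^{-1}\|r(y_n)-y_n\|\to 0$. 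Rewriting the fixed-point equation as $\A x_n-\F(x_n)=h_n^{-1}(r(y_n)-y_n)$ shows $\A x_n\to\F(x_0)$ in $X$, and closedness of $\A$ delivers $x_0\in D(\A)$ with $\A x_0=\F(x_0)$.

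\emph{Additivity and homotopy invariance.} Both are direct transfers from the Leray--Schauder index. For additivity, pick neighbourhoods $W_i\subset U_i\cap (B_X(\K,\eta/2)\cap X^\alpha)$ of the compact sets $\Coin(\A,\F;U_i)$ (compactness from Remark \ref{uwagi}(1)) and set $W:=W_1\sqcup W_2$; the standard additivity of $\Ind_{LS}$ applied to $\phi_h$ on $W$ gives the identity, which survives the limit $h\to 0^+$. For homotopy invariance, Remark \ref{hom} furnishes a two-parameter compact family $\phi_h(x,t)=J_h\circ r(x+h\tilde\H(x,t))$ with no fixed points on $\partial W$ uniformly in $t\in[0,1]$ once $h$ is small, and the Leray--Schauder homotopy invariance yields $\Ind_{LS}(\phi_h(\cdot,0),W)=\Ind_{LS}(\phi_h(\cdot,1),W)$.

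\emph{Normalisation.} Apply homotopy invariance to the straight-line homotopy $\H(u,t):=t\F(u)$: tangency holds because $T_\K(u)$ is a cone, and admissibility follows from boundedness of $\K$ (hence of every $\Coin(\A,t\F;U\cap\K)$) and of $\H(C\times[0,1])\subset[0,1]\cdot\F(\K^\alpha)$. This reduces the problem to $\deg_\K(\A,0;U)$, for which $\phi_h=J_h\circ r:\cl W\to\K^\alpha$. The range factorisation $\phi_h=j\circ\tilde\phi_h$ with $\tilde\phi_h:\cl W\to\K^\alpha$ and $j:\K^\alpha\hookrightarrow X^\alpha$, together with the commutativity of the Leray--Schauder index and the contraction property reducing computations on $X^\alpha$ to those on the $\Lin$-retract $\K^\alpha$, identifies $\Ind_{LS}(\phi_h,W)$ with $\Lambda(\tilde\phi_h\circ j)=\Lambda(\phi_h|_{\K^\alpha})=\Lambda(J_h|_{\K^\alpha})$. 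By the homotopy $\Phi(x,s)=J_{sh}(x)$ built in Remark \ref{uwagi}(3), $J_h|_{\K^\alpha}$ is homotopic to $\id_{\K^\alpha}$, so $\Lambda(J_h|_{\K^\alpha})=\chi(\K^\alpha)=\chi(\K)$.

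The main technical obstacle is the Normalisation step, where one must reconcile the Leray--Schauder index of a compact map $\cl W\to X^\alpha$ with the Lefschetz number of its restriction to the $\Lin$-retract $\K^\alpha$; this is precisely where the factorisation through $\K^\alpha$ and the auxiliary homotopy $\Phi$ from Remark \ref{uwagi}(3) are indispensable. A secondary delicate point is the limit analysis in Existence, where the Clarke tangency of $\F$ at $x_0$ and the $\Lin$-retraction estimate must be combined to control $h_n^{-1}(r(y_n)-y_n)$ and extract $\A x_n\to\F(x_0)$.
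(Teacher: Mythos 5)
Your proof agrees with the paper's on Additivity and Homotopy invariance (both transfers of the corresponding Leray--Schauder index properties), and the broad structure of Existence matches. On Normalisation you take a genuinely different route. You first use the straight-line homotopy $\H(u,t)=t\F(u)$ (admissible by the cone property of $T_\K$ and boundedness of $\K$ and $\F(\K^\alpha)$) to reduce to $\deg_\K(\A,0;U)$, and only then invoke the factorisation through $\K^\alpha$ and the homotopy $s\mapsto J_{sh}$ of Remark~\ref{uwagi}(3). The paper instead keeps $\F$ throughout and directly connects $\phi_h|_{\K^\alpha}$ to the identity via $\Phi(x,t)=\phi_{th}(x)$; this requires the additional estimate $S_1(n)\to 0$ controlling the contribution of $r$ and $\tilde\F$, which your approach avoids entirely by having homotopy invariance absorb those terms. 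Your reduction is cleaner once homotopy invariance is available (and in effect uses the content of Corollary~\ref{wniosek kon}); the paper's is more self-contained within that single step.

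There is a gap in your Existence argument at the compactness step. You write that ``compactness of $J_h:X\to X^\alpha$ together with the boundedness of $\tilde\F(\cl W)$ extracts a subsequence $x_n\to x_0$ in $X^\alpha$.'' But $x_n=J_{h_n}r(y_n)$ with $h_n\searrow 0$, and by Lemma~\ref{lem:0.2}(j), $\|J_{h}\|_{\Lin(X,X^\alpha)}\sim h^{-\alpha}$ blows up as $h\to 0^+$, so boundedness of $\{r(y_n)\}$ in $X$ does not by itself give relative compactness of $\{x_n\}$ in $X^\alpha$. The missing intermediate step (which the paper supplies via \eqref{eq:1.5}--\eqref{eq:1.6}) is: since $x_n\in\K$, the Lipschitz bound $d(y_n,\K)\leq h_n\|\F(x_n)\|\leq h_n R$ and the $\Lin$-retraction estimate give $\|\A x_n-\F(x_n)\|\leq LR$, hence $\|\A x_n\|$ is bounded; one then writes $x_n=J_{h_0}(x_n+h_0\A x_n)$ with a \emph{fixed} $h_0$ and uses compactness of $J_{h_0}\in\Lin(X,X^\alpha)$ to conclude. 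In your writeup the tangency refinement at $x_0$ appears before the compactness has been justified, so the logical dependence is circular as stated. Reordering to match the paper's sequence (crude bound first, compactness, then tangency at the limit) closes the gap.
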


\begin{proof}
(Existence) By the definition \eqref{defdeg}, given a sequence $h_n\searrow 0$, we have $\Ind_{LS}(\phi_{h_n},W)\neq 0$. The existence property of the Leray--Schauder index implies the existence of a sequence $(x_n)$ in $W$ such that $\phi_{h_n}(x_n)=x_n$, i.e., $x_n\in D(\A)\cap W\cap\K$ and $x_n + h_n \A x_n = r( x_n + h_n\tilde\F(x_n))=r(x_n+h_n\F(x_n))$. Hence
\begin{equation}\label{eq:1.5}
\begin{aligned}
\|\A x_n-\F(x_n)\|& = \frac{1}{h_n} \|x_n +h_n\A x_n - x_n -h_n \F(x_n)\|\\&=\frac{1}{h_n}\|r(x_n+h_n\F(x_n))-(x_n+h_n\F(x_n))\|\leq
L\frac{d(x_n+h_n\F(x_n),\K)}{h_n},
\end{aligned}
\end{equation}
where in the last inequality we used the fact that $r$ is $\Lin$-retraction.
Since $x_n \in W\cap \K$, we get
\begin{equation}\label{eq:1.6}
\frac{d(x_n +h_n\F(x_n),\K)}{h_n}= \frac{d(x_n +h_n\F(x_n),\K)-d(x_n,\K)}{h_n}\leq \|\F(x_n)\|\leq R,
\end{equation}
for a constant $R$ such that $\sup_{x\in \cl W}\|F(x)\|= R$.
Equation \eqref{eq:1.5} combined with \eqref{eq:1.6} yields $\{\|Ax_n\|\}_{n\geq 1}$ is bounded. For any $n\geq 1$, $x_n=J_{h_0}(x_n +h_0\A x_n)$, and so, by Lemma \ref{lem:0.2} (jj),  the set $\{x_n\}_{n\geq}$ is relatively compact in $X^\alpha$. Passing to a subsequence if necessary, we have $x_n\to x_0$ in $X^\alpha$ and $x_0\in \cl W\cap\K\subset U\cap \K$.
By \eqref{eq:1.5}, we have
\[
\|\A x_n -\F(x_n)\|\leq L\frac{d(x_n+h_n\F(x_0),\K)}{h_n} + L\| \F(x_0)-\F(x_n)\|.
\]
Letting $n\to\infty$, using the continuity and the tangency $(D_5)$ of $\F$, we see that $\lim_{n\to\infty} \|\A x_n-\F(x_n)\|=0$.
Thus, $\A x_n\to \F(x_0)$ and, since $\A$ is closed, $x_0\in D(\A)$ with $\A x_0=\F(x_0)$.
Thus, $x_0 \in D(\A) \cap U\cap\K$ satisfies $\A x_0=\F(x_0)$.\\
\indent (Additivity) follows by the definition \ref{defdeg} from the additivity property of the Leray--Schauder index.\\
\indent  (Homotopy invariance)  is a consequence of the homotopy invariance of the Leray--Schauder index, the definition \ref{defdeg} and Remark \ref{hom}.\\
\indent (Normalization) Obviously, $\deg_\K(\A,\F;U)$ is well-defined and its independence of $U$ follows immediately from the additivity property. Take an open $W\subset X^\alpha$ such that $\K^\alpha\subset W \subset  \cl W\subset U\cap (B_X(\K^\alpha,\eta/2)\cap X^\alpha$) and a continuous  extension $\tilde\F:X^\alpha\to X$ of $\F$ being bounded on $\cl W$. Let $h\in (0,h_0]$ be such that $\deg_\K(\A,\F;U)=\Ind_{LS}(\phi_h,W)$. Recall that $\phi_h(W)\subset\K^\alpha$. Hence, we may assume that $\phi_h:W\to \K^\alpha$.  Denoting the embedding $\K^\alpha\hookrightarrow W$ by $j$ and $\bar\phi_h:=j\circ \phi_h:W\to W$, we see that $\bar\phi_h$ is a Lefschetz map and
\be\label{lef1}\Ind_{LS}(\phi_h,W)=\Ind_{LS}(\bar\phi_h,W)=\Lambda(\bar\phi_h)\ee is the generalized Lefschetz number of $\bar\phi_h$. The argument is now similar to that from Remark \ref{uwagi} (3). We have the commutative diagram
$$\xymatrix{W\ar[r]^-{\phi_h}\ar[d]^-{\bar\phi_h}&{\K^\alpha}
\ar@<-2pt>[d]^-{\phi_h|_{\K^\alpha}}
\ar[dl]_-j\\
W\ar[r]_-{\phi_h}&{\K^\alpha.}}$$
Again, by \cite[Lemma (3.1)]{Granas}, $\phi_h|_{\K^\alpha}$ is a Lefschetz map and
\be\label{lef2}\Lambda(\phi_h|_{\K\alpha})=\Lambda(\bar\phi_h).\ee
\indent We now show that $\phi_h|_{\K\alpha}$ is homotopic to the identity $\id:\K^\alpha\to\K^\alpha$. To this end, let us define $\Phi:\K^\alpha\times [0,1]\to\K^\alpha$ by
$$\Phi(x,t) :=
\begin{cases}
\phi_{th}(x), &\text{for}\; x\in K^\alpha,\; t \in (0,1],\\
x , &\text{for}\; x\in K^\alpha,\; t = 0.
\end{cases}$$
By Lemma \ref{lem:0.2} (jv), $\Phi$ is continuous on $\K^\alpha\times (0,1]$. Let $x_n\to x_0$ in $\K^\alpha$ and $t_n\searrow 0$. Then
$$\|\Phi(x_n,t_n)-\Phi(x_0,0)\|_\alpha=\|\phi_{t_nh}(x_n) - x_0\|_\alpha\leq S_1(n)+S_2(n),$$
where
$$S_1(n)=\|J_{t_n h} \circ r (x_n + h_n \F(x_n)) - J_{t_n h}(x_n + t_nh \F(x_n))\|_\alpha,\;\; S_2(n)=\| J_{t_n h}(x_n + t_nh \F(x_n)) - x_0 \|_\alpha.$$
By Lemma \ref{lem:0.2} (j), there is $C_\alpha >0$ such that
\begin{align*}
S_1(n)&\leq  \frac{C_\alpha}{(t_n h)^\alpha ( 1- t_nh\omega)^{1-\alpha}}\|r(x_n +t_nh\F(x_n)) - (x_n + t_nh\F(x_n)\|\\
&\leq \frac{C_\alpha}{(t_n h)^\alpha ( 1- t_nh\omega)^{1-\alpha}}Ld(x_n + t_nh\F(x_n),\K)
\leq \frac{C_\alpha L}{(t_n h)^\alpha ( 1- t_nh\omega)^{1-\alpha}} t_nh\|\F(x_n)\|\\
&\leq \frac{C_\alpha L}{( 1- t_nh\omega)^{1-\alpha}}(t_n h)^{1-\alpha} \|\F(x_n)\|\to 0,\quad n\to \infty,
\end{align*}
since the sequence $(\|\F(x_n)\|)$ is bounded. To estimate $S_2(n)$ note that
$$S_2(n) \leq \|J_{t_n h}(x_n + t_nh \F(x_n)) - J_{t_n h}(x_n)\|_\alpha + \|J_{t_n h}(x_n) - x_0\|_\alpha.$$
The first summand satisfies
\[
\|J_{t_n h}(x_n + t_nh \F(x_n)) - J_{t_n h}(x_n)\|_\alpha \leq \frac{C_\alpha}{(1-t_n h\omega)^{1-\alpha}}(t_n h)^{1-\alpha}\|\F(x_n)\|\to 0, \quad n\to\infty,
\]
and, as in \eqref{d}, $\|J_{t_nh}x_n - x_0\|_\alpha\to 0$ as $n\to\infty$.\\
\indent Now it is clear that
\be\label{lef3}\Lambda(\phi_h|_{\K\alpha})=\Lambda(\id)=\lambda(\id)=
\chi(\K^\alpha)=\chi(\K).\ee In view of Remark \ref{uwagi} (3), \eqref{lef1}, \eqref{lef2} and \eqref{lef3} we conclude the proof.
\end{proof}
\begin{remark}\label{ostatek} {\em If assumptions $(D_1)$ -- $(D_6)$ are satisfied and $\F$ is defined on $\K^\alpha$, then $\deg_\K(\A,\F;U)$ does not depend on open $U\subset X^\alpha$ such that $\K^\alpha\subset U$. Therefore we may suppress it from the notation and write $\deg_\K(\A,\F)$. The normalization property implies in particular that $\deg_\K(\A,0)=\chi(\K)$.
}\end{remark}
The following corollary is convenient to use in applications.
\begin{corollary}\label{wniosek kon} Assume that $\K\subset X$ is closed convex and bounded, $\F:\K^\alpha\to X$ is continuous and tangent, namely $\F(u)\in T_\K(u)$, for every $u\in\K^\alpha$, and let $C:=\{u\in \K^\alpha\mid \A u=t\F(u)\; \text{for some}\; t\in [0,1]\}$. If $\F(C)$ is bounded, then there is $u_0\in\K\cap D(\A)$ such that $\A u_0=\F(u_0)$.
\end{corollary}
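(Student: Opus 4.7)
The plan is to apply the constrained topological degree $\deg_\K(\A,\cdot)$ developed in Subsection \ref{constrained degree}, deforming $\F$ to the zero map via the straight-line homotopy and invoking the normalization and existence properties.

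Concretely, I would define $\H:\K^\alpha\times[0,1]\to X$ by $\H(u,t):=t\F(u)$. To apply homotopy invariance I need to verify that $\H$ is an admissible homotopy in the sense of Subsection \ref{constrained degree}: continuity is obvious, and the tangency $\H(u,t)\in T_\K(u)$ follows because $T_\K(u)$ is a convex cone containing $\F(u)$ and hence $t\F(u)$ for every $t\in[0,1]$. The coincidence set $\widetilde C:=\bigcup_{t\in[0,1]}\mathrm{Coin}(\A,\H(\cdot,t);\K^\alpha)$ coincides with the set $C$ of the hypothesis. Since $\K$ is bounded in $X$, so is $C\subset\K$, and $\H(C\times[0,1])=\bigcup_{t\in[0,1]}t\F(C)$ is bounded in $X$ because $\F(C)$ is.

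The main technical step — and the one I expect to be the real obstacle — is to show that $C$ is closed in $X^\alpha$. Take $u_n\in C$ with $u_n\to u_0$ in $X^\alpha$ and choose $t_n\in[0,1]$ with $\A u_n=t_n\F(u_n)$; by compactness of $[0,1]$ we may assume $t_n\to t_0$. Fix $h\in(0,h_0]$ with $h\omega<1$; then $u_n=J_h(u_n+h t_n\F(u_n))$. Since $\F$ is continuous on $\K^\alpha$ (and $\K$ is closed and convex, so $u_0\in\K^\alpha$), the right-hand side converges in $X$ to $J_h(u_0+ht_0\F(u_0))$. Continuity of $J_h:X\to X^\alpha$ from Lemma \ref{lem:0.2}(jv) then yields $u_0=J_h(u_0+ht_0\F(u_0))$, i.e. $u_0\in D(\A)$ and $\A u_0=t_0\F(u_0)$, so $u_0\in C$. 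This confirms $\H$ is admissible.

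Having verified admissibility, the homotopy invariance property of Theorem \ref{thm:1.1} gives
\[
\deg_\K(\A,\F)=\deg_\K(\A,\H(\cdot,1))=\deg_\K(\A,\H(\cdot,0))=\deg_\K(\A,0).
\]
By the normalization property (and Remark \ref{ostatek}), $\deg_\K(\A,0)=\chi(\K)$. Since $\K$ is nonempty, closed, convex, and bounded, it is contractible, so $\chi(\K)=1\neq 0$. The existence property of Theorem \ref{thm:1.1} therefore delivers $u_0\in\K\cap D(\A)$ with $\A u_0=\F(u_0)$, proving the corollary.
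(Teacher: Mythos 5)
Your proof is correct and follows essentially the same route as the paper's own (very terse) argument: define the linear homotopy $\H(u,t)=t\F(u)$, check it is admissible, then combine homotopy invariance with normalization and existence. You fill in the admissibility check that the paper omits, and that check is sound; note only that the closedness of $C$ can be obtained even more directly from the closedness of $\A$ together with continuity of $\H$ (cf.\ Remark~\ref{uwagi}(2)), and that the continuity $J_h:X\to X^\alpha$ you invoke follows from Lemma~\ref{lem:0.2}(j), not (jv).
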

\begin{proof} The assumptions imply that a map $\H(u,t):\K^\alpha\to X$ given by $\H(u,t):=t\F(u)$ for $u\in\K^\alpha$, provide an admissible homotopy joining $\F$ to the constant map 0. Then the assertion follows as an immediate consequence of the existence, homotopy and normalization properties of the degree.\end{proof}
\begin{remark}\emph{
If the operator $\A$ is $m$-accretive, then Corollary \ref{wniosek kon} is true if $\K$ is not bounded, but $C$ is bounded. It is an open problem to get this result without a provisional assumption of $m$-accretivity.
}\end{remark}

\noindent\textbf{Acknowledgement.} The first author wishes to express his gratitude towards Research Centre Erwin Schr\"odinger International Institute for Mathematics and Physics
(ESI) of the University of Vienna  for support during his stay at the ESI. The second author  was partially supported by the grant 346300 for IMPAN from the Simons Foundation and the matching 2015-2019 Polish MNiSW fund. The authors are grateful to Aleksander \'{C}wiszewski for fruitful discussions.

\begin{bibdiv}
	\begin{biblist}		

\bib{Adams}{book}{
   author={Adams, Robert A.},
   title={Sobolev spaces},
   note={Pure and Applied Mathematics, Vol. 65},
   publisher={Academic Press [A subsidiary of Harcourt Brace Jovanovich,
   Publishers], New York-London},
   date={1975},
   pages={xviii+268},
   review={\MR{0450957}},
}	

\bib{Alikakos}{article}{
author={Alikakos, N.},
title={Remarks on invariance in reaction-diffusion equations},
journal={Nonl. Anal. Theory, Meth. \& Appl.},
volume={5},
date={1981},
pages={593--614}
}

\bib{Amann}{article}{
author={Amann, H.},
title={Invariant sets and existence theorems fo semilinear parabolic and elliptic systems},
journal={J. Math. Anal Appl.},
volume={65},
date={1978},
pages={432--467}	
}

\bib{TerElst}{article}{
   author={Arendt, Wolfgang},
   author={ter Elst, A. F. M.},
   title={From forms to semigroups},
   conference={
      title={Spectral theory, mathematical system theory, evolution
      equations, differential and difference equations},
   },
   book={
      series={Oper. Theory Adv. Appl.},
      volume={221},
      publisher={Birkh\"{a}user/Springer Basel AG, Basel},
   },
   date={2012},
   pages={47--69},
   review={\MR{2953980}},
}	

\bib{ArendtKreuter}{article}{
   author={Arendt, Wolfgang},
   author={Kreuter, Marcel},
   title={Mapping theorems for Sobolev spaces of vector-valued functions},
   journal={Studia Math.},
   volume={240},
   date={2018},
   number={3},
   pages={275--299},
   issn={0039-3223},
   review={\MR{3731026}},
   doi={10.4064/sm8757-4-2017},
}

\bib{Aubin}{book}{
   author={Aubin, Jean-Pierre},
   author={Frankowska, H\'{e}l\`ene},
   title={Set-valued analysis},
   series={Modern Birkh\"{a}user Classics},
   note={Reprint of the 1990 edition [MR1048347]},
   publisher={Birkh\"{a}user Boston, Inc., Boston, MA},
   date={2009},
   pages={xx+461},
   isbn={978-0-8176-4847-3},
   review={\MR{2458436}},
   doi={10.1007/978-0-8176-4848-0},
}

\bib{Beber}{article}{
   author={Bebernes, J. W.},
   author={Schmitt, K.},
   title={Invariant sets and the Hukuhara-Kneser property for systems of
   parabolic partial differential equations},
   journal={Rocky Mountain J. Math.},
   volume={7},
   date={1977},
   number={3},
   pages={557--567},
   issn={0035-7596},
   review={\MR{600519}},
   doi={10.1216/RMJ-1977-7-3-557},
}

\bib{BenK}{article}{
author={Ben-El-Mechaiekh, H.},
author={Kryszewski, W.},
title={Equilibria of set-valued maps on nonconvex domains},
journal={Transaction of Amer. Math. Soc.},
volume={349},
date={1997},
number={10},
pages={4159--4179}
}

\bib{Cann}{article}{
author={Cannarsa, P.},
author={Da Prato, G.},
author={Frankowska, H.},
title={Invariance for quasi-dissipative systems in Banach spaces},
journal={J. Math. Anal. Appl.},
volume={457},
date={2018},
pages={1173--1187}
}

\bib{Carja}{book}{
author={C\^arj\v a, O.}
author={Necula, M.},
author={Vrabie, I.},
title={Viability, invariance and application},
publisher={Elsevier Science B. V. Amsterdam},
date={2007}
}

\bib{Chueh}{article}{
author={Chueh, K. N.},
author={Conley, C. C.},
author={Smoller, J. A.},
title={Positively invariant regions for systems of nonliear reaction-diffusion equation},
journal={Indiana Univ. Math. J.},
volume={26},
date={1977},
pages={373--391}
}

\bib{Cholewa}{book}{
author={Cholewa, J. W.},
author={Dlotko, T.},
title={Global Attractors in Abstract Parabolic Equations},
series={London Mathematical Society Lecture Note Series},
volume={278},
publisher={Cambridge University Press},
date={2000},
pages={xii+235},
isbn={978-0-521-79424-4}
}

\bib{Cwiszewski}{article}{
author={\'{C}wiszewski, A.},
title= {Topological degree methods
for perturbations of operators generating compact $C_0$
semigroups},
journal={J. Differential Equations},
date={2006},
volume={220},
number={2},
pages={434-477}
}

\bib{CwiszewskiKryszewski2}{article}{
   author={\'{C}wiszewski, Aleksander},
   author={Kryszewski, Wojciech},
   title={Homotopy invariants for tangent vector fields on closed sets},
   journal={Nonlinear Anal.},
   volume={65},
   date={2006},
   number={1},
   pages={175--209},
   issn={0362-546X},
   review={\MR{2226264}},
   doi={10.1016/j.na.2005.09.010},
}

\bib{CwiszewskiKryszewski}{article}{
   author={\'{C}wiszewski, Aleksander},
   author={Kryszewski, Wojciech},
   title={Constrained topological degree and positive solutions of fully
   nonlinear boundary value problems},
   journal={J. Differential Equations},
   volume={247},
   date={2009},
   number={8},
   pages={2235--2269},
   issn={0022-0396},
   review={\MR{2561277}},
   doi={10.1016/j.jde.2009.06.025},
}
	
\bib{Egert}{article}{
author={Egert,Moritz},
author={Tolksdorf, Patrick},
title={Characterization of Sobolev functions that vanish on a part of the boundary},
journal={Disc. Cont Dynamical Sytems},
volume={10},
date={2017},
number={4},
pages={729--743},
doi={doi:10.3934/dcdss.2017037},

}	
		
\bib{EngelNagel}{book}{
   author={Engel, Klaus-Jochen},
   author={Nagel, Rainer},
   title={One-parameter semigroups for linear evolution equations},
   series={Graduate Texts in Mathematics},
   volume={194},
   note={With contributions by S. Brendle, M. Campiti, T. Hahn, G. Metafune,
   G. Nickel, D. Pallara, C. Perazzoli, A. Rhandi, S. Romanelli and R.
   Schnaubelt},
   publisher={Springer-Verlag, New York},
   date={2000},
   pages={xxii+586},
   isbn={0-387-98463-1},
   review={\MR{1721989}},
}	

\bib{Gilbarg}{book}{
   author={Gilbarg, David},
   author={Trudinger, Neil S.},
   title={Elliptic partial differential equations of second order},
   series={Classics in Mathematics},
   note={Reprint of the 1998 edition},
   publisher={Springer-Verlag, Berlin},
   date={2001},
   pages={xiv+517},
   isbn={3-540-41160-7},
   review={\MR{1814364}},
}	
		
\bib{Giaquinta}{book}{
   author={Giaquinta, Mariano},
   author={Martinazzi, Luca},
   title={An introduction to the regularity theory for elliptic systems,
   harmonic maps and minimal graphs},
   series={Appunti. Scuola Normale Superiore di Pisa (Nuova Serie) [Lecture
   Notes. Scuola Normale Superiore di Pisa (New Series)]},
   volume={11},
   edition={2},
   publisher={Edizioni della Normale, Pisa},
   date={2012},
   pages={xiv+366},
   isbn={978-88-7642-442-7},
   isbn={978-88-7642-443-4},
   review={\MR{3099262}},
   doi={10.1007/978-88-7642-443-4},
}

\bib{Granas}{article}{
   author={Granas, Andrzej},
   title={The Leray-Schauder index and the fixed point theory for arbitrary
   ANRs},
   journal={Bull. Soc. Math. France},
   volume={100},
   date={1972},
   pages={209--228},
   issn={0037-9484},
   review={\MR{0309102}},
}

\bib{GranasDugundji}{book}{
   author={Granas, Andrzej},
   author={Dugundji, James},
   title={Fixed point theory},
   series={Springer Monographs in Mathematics},
   publisher={Springer-Verlag, New York},
   date={2003},
   pages={xvi+690},
   isbn={0-387-00173-5},
   review={\MR{1987179}},
   doi={10.1007/978-0-387-21593-8},
}		

\bib{Guesmia}{article}{
author = {Guesmia, S.},
journal ={Differential and Integral Equations},
volume={23},
number = {11/12},
pages = {1091--1103},
title = {Garding inequality on unbounded domains},
year = {2010}
}

\bib{Haase}{book}{
author={Haase, M.},
title={The Functional Calculus for Sectorial Operators},
series={Operator Theory: Advances and Applications}
volume={169},
publisher={Birkhäuser, Basel},
date={2006},
pages={xiv+392},
isbn={�3-7643-7515-9}
}
		
\bib{Henry}{book}{
	author={Henry, Daniel},
   	title={Geometric theory of semilinear parabolic equations},
   	series={Lecture Notes in Mathematics},
   	volume={840},
   	publisher={Springer-Verlag, Berlin-New York},
   	date={1981},
   	pages={iv+348},
   	isbn={3-540-10557-3},
   	review={\MR{610244}},
}	
	
\bib{Karsatos}{book}{
author={Kartsatos, A. G},
title={Recent results involving
compact perturbations and compact resolvents of accretive
operators in Banach spaces},
series={Proceedings of the First World
Congress of Nonlinear Analysts, Tampa, Florida},
date={1992},
publisher={Walter de
Gruyter, New York, 1995}
}

\bib{KryszewskiSiemianowski}{article}{
   author={Kryszewski, Wojciech},
   author={Siemianowski, Jakub},
   title={The Bolzano mean-value theorem and partial differential equations},
   journal={J. Math. Anal. Appl.},
   volume={457},
   date={2018},
   number={2},
   pages={1452--1477},
   issn={0022-247X},
   review={\MR{3705363}},
   doi={10.1016/j.jmaa.2017.01.040},
}

\bib{Kunst}{book}{
   author={Kunstmann, Peer},
   author={Weis, Lutz},
   title={Maximal $L^p$-regularity for Parabolic Equations, Fourier
Multiplier Theorems and $H^\infty$-functional Calculus},
   series={Lecture Notes in Mathematics},
   volume={1885},
   editor={J.-M. Morel},
   editor={F. Takens},
   editor={B. Teissier},
   publisher={Springer-Verlag Berlin, Heidelber, New York},
   date={2004},
   pages={65--312},
   isbn={3-540-23030-0},
}	
\bib{Kuiper1}{article}{
author={Kuiper, H. J.},
title={Invariant set for nonlinear elliptic and parabolic systems},
journal={SIAM J. Math. Anal.},
volume={11},
date={1980},
pages={1075--1103}
}

\bib{Kuiper2}{article}{
author={Kuiper, H. J.},
title={Positively invariant regions for strongly coupled reaction-diffusion systems with balance law},
journal={J. Math. Anal. Appl.},
vplume={249},
date={2000},
pages={340--350}
}

\bib{S}{article}{
author={Le, Vy Khoi},
author={Schmitt, Klaus},
title={Some general concepts of sub and supersolutions for nonlinear elliptic problems},
journal={Topol. Methods. Nonlinear Anal.},
volume={28},
date={2006},
number={1},
pages={87--103},
review={MR2262257}
}
\bib{Marc}{article}{
author={Marciniak-Czochra, A.},
author={Kimmel, M.},
title={Modelling of early lung cancer
progression: influence of growth factor production and cooperation between
partially transformed cells},
journal={Math. Mod. Meth. Appl. Sci.},
volume={17},
date={2007},
pages={1693--1719}
}
\bib{MarcusMizel}{article}{
   author={Marcus, Moshe},
   author={Mizel, Victor J.},
   title={Every superposition operator mapping one Sobolev space into
   another is continuous},
   journal={J. Funct. Anal.},
   volume={33},
   date={1979},
   number={2},
   pages={217--229},
   issn={0022-1236},
   review={\MR{546508}},
   doi={10.1016/0022-1236(79)90113-7},
}

\bib{MarcusMizel1}{article}{
   author={Marcus, Moshe},
   author={Mizel, Victor J.},
   title={Nemitsky operators on Sobolev spaces},
   journal={Arch. Rational Mech. Anal.},
   volume={51},
   date={1973},
   pages={347--370},
}
\bib{MarcusMizel2}{article}{
author={Marcus, Moshe},
author={Mizel, Victor J.},
   title={Complete Characterization of Functions Which Act, Via
   Superposition, on Sobolev Spaces},
journal={ Trans. Amer. Math. Soc.},
volume={251},
date={1979},
pages={187-218},
}

\bib{Martin}{book}{
author={Martin, Jr. R. H.},
title={Nonlinear operators and differential equations in banach spaces},
publisher={Wiley-Interscience, New York},
date={1976}
}

\bib{McLean}{book}{
author={McLean, W>},
title={Strongly Elliptic Systems and Boundary Integral Equations},
publisher={Cambridge University Press},
date={2000},
pages={xiv+357},
isbn={0 521 66332 6}
}

\bib{McKenna}{article}{
author={McKenna, P.},
author={Walter, W.},
title={On the Dirichlet problem for
elliptic systems},
journal={Applicable Analysis},
volume={21},
date={1986},
number={3},
pages={207--224}

}

\bib{Miti}{article}{
author={Mitidieri, Enzo},
author={Sweers, Guido},
title={Existence of a maximal solution for quasimonotone elliptic systems},
journal={Differential Integral Eq.},
volume={7},
date={1993},
number={6},
pages={1495--1510}
}

\bib{Muller}{article}{
author={M\"uller, Max},
title={\"Uber das Fundamentaltheorem in der Theorie der gew\"ohnlichen Differentialgleichungen},
journal={Math. Z.}
volume={26},
date={1927},
number={1},
pages={619--645}
}

\bib{Muller-book}{book}{
author={M\"uller, Max},
title={\"Uber die Eindeutigkeit der Integrale eines Systems gew\"ohnlicher Differentialgleichungen und die Konvergenz einer Gattung von Verfahren zur Approximation dieser Integrale},
publisher={Walter de Gruyter, Heidelberg},
date={1927}
}
\bib{Necula}{article}{
author={Necula, M.},
author={Popescu, M.},
author={Vrabie, I.},
title={Viability for differential inclusions on graphs},
journal={Set-Valued Anal.}
volume={16}
date={2008},
pages={961--981}
}

\bib{Pazy}{book}{
   author={Pazy, A.},
   title={Semigroups of linear operators and applications to partial
   differential equations},
   series={Applied Mathematical Sciences},
   volume={44},
   publisher={Springer-Verlag, New York},
   date={1983},
   pages={viii+279},
   isbn={0-387-90845-5},
   review={\MR{710486}},
   doi={10.1007/978-1-4612-5561-1},
}	
\bib{Plum}{article}{
   author={Plum, Michael},
   title={Shape-invariant bounds for reaction-diffusion systems with unequal
   diffusion coefficients},
   journal={J. Differential Equations},
   volume={73},
   date={1988},
   number={1},
   pages={82--103},
   issn={0022-0396},
   review={\MR{938216}},
   doi={10.1016/0022-0396(88)90119-2},
}

\bib{Redheffer}{article}{
author={Redheffer, R.},
author={Walter, W.},
title={Invariant sets for systems of partial differential equations},
journal={Arch. Rat. Mech. Anal.},
volume={67},
date={1978},
pages={41--52}
}

\bib{Redlinger}{article}{
author={Redlinger, R.},
title={Invariant sets for strongly coupled reaction-diffusion systems under general boundary conditions},
journal={Arch. Rat. Mech. Anal.},
volume={108},
date={1989},
pages={281--291}
}	

\bib{schroder}{article}{
   author={Schr\"{o}der, Johann},
   title={Shape-invariant bounds and more general estimates for
   vector-valued elliptic-parabolic problems},
   journal={J. Differential Equations},
   volume={45},
   date={1982},
   number={3},
   pages={431--460},
   issn={0022-0396},
   review={\MR{672717}},
   doi={10.1016/0022-0396(82)90037-7},
}

\bib{Show}{book}{
   author={Showalter, R. E.},
   title={Monotone operators in Banach space and nonlinear partial
   differential equations},
   series={Mathematical Surveys and Monographs},
   volume={49},
   publisher={American Mathematical Society, Providence, RI},
   date={1997},
   pages={xiv+278},
   isbn={0-8218-0500-2},
   review={\MR{1422252}},
}

\bib{Smoller}{book}{
   author={Smoller, Joel},
   title={Shock waves and reaction-diffusion equations},
   series={Grundlehren der Mathematischen Wissenschaften [Fundamental
   Principles of Mathematical Science]},
   volume={258},
   publisher={Springer-Verlag, New York-Berlin},
   date={1983},
   pages={xxi+581},
   isbn={0-387-90752-1},
   review={\MR{688146}},
}

\bib{Walter-survey}{article}{
author={Walter, W.},
title={Differential inequalities and maximum principles: theory, new methods and applications},
journal={Nonlinear Anal. Theory, Meth. \& Appl.},
volume={30},
date={1997},
number={8},
pages={4695--4711}
}
\bib{Walter-book}{book}{
author={Walter, W.},
title={Differential and Integral Inequalities},
series={Ergebnisse der Mathematik und ihrer Grenzgebiete}
volume={Band 55},
publisher={Springer-Verla, Heidelberg} ,
date={1970}
}

\bib{Wein}{article}{
author={Weinberger, H. F.},
title={Invariant sets for weakly coupled parabolic and elliptic systems},
journal={Rend. Mat.},
volume={8},
date={1975},
pages={295--310}
}

\bib{Yagi}{book}{
author={Yagi, A.},
title={Abstract Parabolic Evolution Equations and their Applications},
series={Springer Monographs in Mathematics},
publisher={Springer-Verlag, Heidelberg},
date={2010},
pages={xviii+581},
isbn={978-3-642-04630-8}
}			

			\end{biblist}
	\end{bibdiv}

\end{document}